\theoremstyle{plain}
\newtheorem{theorem}{Theorem}[subsection]
\newtheorem{lemma}[theorem]{Lemma}
\newtheorem{proposition}[theorem]{Proposition}
\newtheorem{corollary}[theorem]{Corollary}
\newtheorem{maintheorem}{Theorem}[section]
\theoremstyle{definition}
\newtheorem{definition}[theorem]{Definition}
\newtheorem{notation}[theorem]{Notation}
\newtheorem{setting}[maintheorem]{Setting}
\theoremstyle{remark}
\newtheorem{remark}[theorem]{Remark}
\numberwithin{equation}{subsection}
\newcommand{\Z}{\ensuremath{\mathbf{Z}}}
\newcommand{\R}{\ensuremath{\mathbf{R}}}
\newcommand{\A}{\ensuremath{\mathbf{A}}}
\renewcommand{\P}{\ensuremath{\mathbf{P}}}
\newcommand{\Cs}{\ensuremath{\mathscr{C}}}
\newcommand{\Es}{\ensuremath{\mathscr{E}}}
\newcommand{\Fs}{\ensuremath{\mathscr{F}}}
\newcommand{\Hs}{\ensuremath{\mathscr{H}}}
\newcommand{\Js}{\ensuremath{\mathscr{J}}}
\newcommand{\Os}{\ensuremath{\mathscr{O}}}
\newcommand{\Hc}{\ensuremath{\mathcal{H}}}
\newcommand{\Rc}{\ensuremath{\mathcal{R}}}
\newcommand{\Yk}{\ensuremath{\mathfrak{Y}}}
\newcommand{\E}[2]{\ensuremath{\mathbf{A}^{#1,\mathrm{an}}_{#2}}}
\newcommand\ti[1]{\ensuremath{\tilde{#1}}}
\newcommand{\ho}{\ensuremath{\hat{\otimes}}}
\newcommand{\an}{\textrm{an}}
\newcommand{\Hom}{\textrm{Hom}}
\newcommand{\la}{\langle}
\newcommand{\ra}{\rangle}
\begin{document}
\setlength{\baselineskip}{0.50cm}	

\title[Continuity and finiteness \textit{via} potential theory]{Continuity and finiteness of the radius of convergence of a $p$-adic differential equation \textit{via} potential theory}

\author{J\'er\^ome Poineau}
\email{jerome.poineau@math.unistra.fr}
\address{Institut de recherche math\'ematique avanc\'ee, 7, rue Ren\'e Descartes, 67084 Strasbourg, France}

\author{Andrea Pulita}
\email{pulita@math.univ-montp2.fr}
\address{D\'epartement de Math\'ematiques, Universit\'e de Montpellier II, CC051, Place Eug\`ene Bataillon, 34095, Montpellier Cedex 5, France}

\thanks{The research for this article was partially supported by the ANR projects Berko and CETHop.}
\date{\today}

\subjclass[2010]{12H25 (primary), 14G22 (secondary)}
\keywords{$p$-adic differential equations, Berkovich spaces, radius of convergence, continuity, finiteness, potential theory}

\begin{abstract}
We study the radius of convergence of a differential equation on a smooth Berkovich curve over a non-archimedean complete valued field of characteristic~0. Several properties of this function are known: F.~Baldassarri proved that it is continuous (see~\cite{ContinuityCurves}) and the authors showed that it factorizes by the retraction through a locally finite graph (see~\cite{finiteness} and~\cite{finiteness2}). Here, assuming that the curve has no boundary or that the differential equation is overconvergent, we provide a shorter proof of both results by using potential theory on Berkovich curves.
\end{abstract}

\maketitle

\section{Introduction}

Let~$K$ be a non-archimedean complete valued field of characteristic 0. Let~$X$ be a quasi-smooth $K$-analytic curve, in the sense of Berkovich theory. Let~$\Fs$ be a locally free $\Os_{X}$-module of finite type endowed with an integrable connection~$\nabla$.

By the implicit function theorem, in the neighbourhood of any $K$-rational point~$x$, the curve is isomorphic to a disc, and it makes sense to consider the radius~$\Rc(x)$ of the biggest disc on which $(\Fs,\nabla)$ is trivial. Extending the scalars, we may find a rational point above any given point and extend the definition to the whole curve~$X$. 

The radius of convergence is a function that has been actively investigated and its general behaviour is now rather well understood. The main features we are interested in are its continuity and a finiteness property (the fact that the map is controlled by its behaviour on a locally finite graph). Both are known. In the nineties already, G.~Christol and B.~Dwork proved the continuity of the radius of convergence over the skeleton of an annulus (see~\cite{ChristolDwork}). This result was later extended to a continuity statement on affinoid domains of the affine line by F.~Baldassarri and L.~Di Vizio (see~\cite{ContinuityBDV}), then on curves by F.~Baldassarri (see~\cite{ContinuityCurves}). As for the finiteness property, it was recently proven by the authors (see~\cite{finiteness} and~\cite{finiteness2}).  The proofs of all these results are quite long and involved, which led us to believe that it was worth finding shorter ones, even to the expense of restricting the setting: here, we assume that the curve is boundary-free (\textit{e.g.} the analytification of an algebraic curve) or that the connection is overconvergent. We have tried to make our paper as self-contained as possible.

Our techniques rely, for the main part, on potential theory on Berkovich curves, as developed by A.~Thuillier in his thesis~\cite{TheseThuillier}. We will provide a reminder in section~\ref{sec:potential} and then use freely the notions of Laplacian operator, harmonic and sub-harmonic functions, etc. As regards \mbox{$p$-adic} differential equations, we will only need to know how the radius of convergence behaves on an interval inside a disc or an annulus: it is continuous, concave, piecewise $\log$-linear with slopes that are rational numbers whose denominators are bounded by the rank of~$\Fs$ (see theorem~\ref{thm:Kedlaya} and the discussion that precedes for attribution of the results).

Let us say a few words about the strategy of the proof. In the case where~$X$ is an analytic domain of the affine line, with coordinate~$t$, the ring~$\Os(X)$ may be endowed with the usual derivation $d=\mathrm{d}/\mathrm{d}t$. If the sheaf~$\Fs$ is free on~$X$, then, together with its connection~$\nabla$, it comes from a differential module~$(M,D)$. The radius of convergence of~$(M,D)$ may then be computed as the radius of convergence of a power series, hence by the usual formula $\liminf_{n} (|f_{n}|^{-1/n})$, for some $f_{n} \in \Os(X)$. It is here that potential theory enters the picture: by general arguments, the logarithm of the resulting function, or more precisely its lower semicontinuous envelope, is super-harmonic. 

Unfortunately, this proof cannot be directly adapted to the case of a general smooth curve, even locally, for lack of a canonical derivation. Still, using geometric arguments, around any point~$x$ of type~2, we manage to find a suitable derivation and use it to show that the logarithm of the radius of convergence of~$(\Fs,\nabla)$ coincides with a super-harmonic function on some affinoid domain~$Y$ of~$X$ containing~$x$ (\textit{i.e.} outside a finite number of branches emanating from~$x$). This will be sufficient for our purposes.

The rest of the proof relies on general properties of super-harmonic functions. We will use in a crucial way the fact that their Laplacians are Radon measures. More precisely, together with the fact that the non-zero slopes of the logarithm of the radius of convergence are bounded below in absolute value, this property implies that, around any point of type~2 of~$X$, there may only be a finite number of directions along which this radius is not constant. Using the same strategy as in~\cite{finiteness}, we will deduce that it is locally constant outside a locally finite subgraph~$\Gamma$ of~$X$.

To prove that the radius of convergence is continuous, it is now enough to show that it is continuous on~$\Gamma$. This will follow from another general property of super-harmonic functions: their restrictions to segments are continuous at points of type~2, 3 and~4.

Let us finally point out that the restriction to boundary-free curves (or overconvergent connections) is due to an inherent limitation of the methods of potential theory. Indeed, points that lie at the boundary of the space behave as if some directions out of them were missing and, at those points, the Laplacian of a function (which is a weighted sum of all outer derivatives) carries too little information for us to use.

\medskip

\noindent\textbf{Acknowledgments} 

We are very grateful to Amaury Thuillier for his comments and helpful discussions on the subject.

\begin{setting}\label{setting}
For the rest of the article, we fix the following: $K$~is a complete non-archimedean valued field of characteristic~0, $X$~is a quasi-smooth $K$-analytic curve\footnote{Quasi-smooth means that $\Omega_{X}$ is locally free, see~\cite[2.1.8]{RSSen}. This corresponds to the notion called ``rig-smooth'' in the rigid analytic setting.} endowed with a weak triangulation~$S$, $\Fs$~is a locally free $\Os_{X}$-module of finite type endowed with an integrable connection~$\nabla$. 
\end{setting}

\section{Definitions}

To define the radius of convergence of~$(\Fs,\nabla)$, one needs to understand precisely the geometry of the curve~$X$. We find it convenient to use A.~Ducros's notion of triangulation (see~\cite{RSSen}), which enables to cut the curve into simple pieces. Our definition of radius of convergence actually depends on the choice of such a (weak) triangulation~$S$ on~$X$. We will not carry out the construction in every detail but content ourselves with the basic definitions and properties. We refer to section~2 of our previous paper~\cite{finiteness2} for a more thorough exposition.

\subsection{Triangulations}

Our reference for this part is A.~Ducros's manuscript~\cite{RSSen} and especially chapter~4.

Let us first recall that a connected analytic space is called a virtual open disc (resp. annulus) if it becomes isomorphic to a union of open discs (resp. annuli) over an algebraically closed valued field.

\begin{notation}
For any subset~$Y$ of~$X$, we denote~$Y_{[2,3]}$ its subset of points of type~2 or~3.
\end{notation} 

\begin{definition}\label{defi:triangulation}
A locally finite subset~$S$ of~$X_{[2,3]}$ is said to be a weak triangulation of~$X$ if any connected component of~$X\setminus S$ is a virtual open disc or annulus.
\end{definition}

Note that a weak triangulation may be empty (\textit{e.g.} in the case of a disc or an annulus).

It is possible to associate a skeleton~$\Gamma_{S}$ to a weak triangulation~$S$ by considering the union of the skeletons of the connected components of $X\setminus S$ that are virtual annuli. It is a locally finite subgraph of~$X_{[2,3]}$.

One of the main results of A.~Ducros's manuscript~\cite{RSSen} is the existence of a triangulation\footnote{A.~Ducros's definition is actually stronger than ours since he requires the connected components of~$X\setminus S$ to be relatively compact. Any triangulation is a weak triangulation.} on any quasi-smooth curve. For the rest of the article, we assume that~$X$ is endowed with a weak triangulation~$S$. 

For any complete valued extension~$L$ of~$K$, the weak triangulation~$S$ may be canonically extended to a weak triangulation~$S_{L}$ of~$X_{L}$.

\subsection{Distances}

In the following, we will need to measure distances on the curve~$X$, or at least on some segments inside it. We will explain quickly how this may be done by recalling the rough lines of A.~Ducros's notion of gauge (``toise'' in French, see~\cite[1.6.1]{RSSen}). This construction is not new and several equivalent ones may be found in the literature (see~\cite[section~2.7]{BR} in the case of the line over an algebraically closed field or~\cite[section~2.2]{TheseThuillier} in the general case, for instance).

We first introduce notations for discs and annuli that will also be useful in the rest of the text.

\begin{notation}
Let~$\E{1}{K}$ be the affine analytic line with coordinate~$t$. Let~$L$ be a complete valued extension of~$K$ and $c\in L$. For $R>0$, we set 
\[D_{L}^+(c,R) = \big\{x\in \E{1}{L}\, \big|\, |(t-c)(x)|\le R\big\}\] 
and  
\[D_{L}^-(c,R) = \big\{x\in \E{1}{L}\, \big|\, |(t-c)(x)|<R\big\}.\]
For $R_{1},R_{2}$ such that $0 < R_{1} \le R_{2}$, we set
\[C_{L}^+(c;R_{1},R_{2}) = \big\{x\in \E{1}{L}\, \big|\, R_{1}\le |(t-c)(x)|\le R_{2}\big\}.\] 
For $R_{1},R_{2}$ such that $0 < R_{1} < R_{2}$, we set
\[C_{L}^-(c;R_{1},R_{2}) = \big\{x\in \E{1}{L}\, \big|\, R_{1} < |(t-c)(x)| < R_{2}\big\}.\] 
\end{notation}

\begin{definition}\label{defi:modulus}
The modulus of the closed annulus $C_{L}^+(c;R_{1},R_{2})$ is defined by
\[\textrm{Mod}(C_{L}^+(c;R_{1},R_{2})) = \frac{R_{2}}{R_{1}}.\]
It is independent of the coordinate~$t$ on the annulus.
\end{definition}

The modulus is the basic tool that helps define distances on curves. To explain the idea in a simple case, let us assume, for a short moment, that~$K$ is algebraically closed and that~$X$ is the affine analytic line~$\E{1}{K}$. In this case, any segment $I \subset X_{[2,3]}$ is the skeleton of a closed annulus~$I^\sharp$ and we may set $\ell(I) = \log(\textrm{Mod}(I^\sharp))$. This defines a gauge on~$X_{[2,3]}$. 

Since moduli of annuli are invariant under Galois action, we may consider virtual annuli instead of annuli and relax the hypothesis that the field~$K$ be algebraically closed. The method may actually be extended to curves, by cutting the segments into a finite number of pieces whose interiors lie inside the affine line. Using this kind of arguments, A.~Ducros shows that there exists a canonical gauge~$\ell$ on~$X_{[2,3]}$ in the general case (see~\cite[proposition~3.4.19]{RSSen}).

In what follows, every time we need to measure a segment (to speak of linear or $\log$-linear maps, to compute derivatives, etc.), we will use this canonical gauge.

\subsection{Radius of convergence}\label{sec:radius}

We are now ready to define the radius of convergence of $(\Fs,\nabla)$.

\begin{definition}\label{def:DxS}
Let~$x \in X$. Let~$L$ be a complete valued extension of~$K$ such that $X_{L}$ contains an $L$-rational point~$\ti{x}$ over~$x$. We denote $D(\ti{x},S_{L})$ the biggest open disc centred at~$\ti{x}$ that is contained in $X_{L}\setminus S_{L}$, \textit{i.e.} the connected component of $X_{L}\setminus \Gamma_{S_{L}}$ that contains~$\ti{x}$. 
\end{definition}

\begin{definition}\label{def:radius}
Let~$x$ be a point in~$X$ and~$L$ be a complete valued extension of~$K$ such that $X_{L}$ contains an $L$-rational point~$\ti{x}$ over~$x$. Let us consider the pull-back~$(\ti{\Fs},\ti{\nabla})$ of~$(\Fs,\nabla)$ on $D(\ti{x},S_{L}) \simeq D_{L}^-(0,R)$. We denote~$\Rc'_{S}(x,(\Fs,\nabla))$ the radius of the biggest open disc centred at~$0$ on which $(\ti{\Fs},\ti{\nabla})$ is trivial and $\Rc_{S}(x,(\Fs,\nabla)) = \Rc'_{S}(x,(\Fs,\nabla))/R$.\footnote{If~$\ti D$ denotes the biggest open disc centred at~$\ti x$ on which $(\ti{\Fs},\ti{\nabla})$ is trivial, then $\Rc_{S}(x,(\Fs,\nabla))$ may also be defined as the modulus of the annulus $D(\ti{x},S_{L}) \setminus \ti{D}$ (with an obvious generalisation of definition~\ref{defi:modulus}).}
\end{definition}

The definition is independent of the choices of~$L$ and~$\ti{x}$ and invariant by extension of the base field~$K$.

\begin{notation}
For any complete valued extension~$L$ of~$K$, we denote by $\pi_{L} : X_{L} \to X$ the natural projection.
\end{notation}

\begin{lemma}\label{lem:basechange}
Let~$L$ be a complete valued extension of~$K$. For any $x\in X_{L}$, we have
\[\Rc_{S_{L}}(x,\pi_{L}^*(\Fs,\nabla)) = \Rc_{S}(\pi_{L}(x),(\Fs,\nabla)).\]
\end{lemma}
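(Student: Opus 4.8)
The plan is to reduce everything to the definition of the radius of convergence (Definition~\ref{def:radius}) and to the transitivity of scalar extension. The key observation is that the definition of $\Rc_S(\cdot,(\Fs,\nabla))$ is intrinsically computed after passing to a sufficiently large field over which a rational point above the given point exists, and that this construction is insensitive to the intermediate field used. So the statement should follow by choosing a field big enough to work for both sides simultaneously, and then checking that the two recipes land on the same disc and the same modulus.

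Concretely, fix $x \in X_L$ and set $y = \pi_L(x) \in X$. First I would pick a complete valued extension $M$ of $L$ (hence of $K$) such that $(X_L)_M \simeq X_M$ contains an $M$-rational point $\ti x$ lying over $x$; then $\ti x$ also lies over $y$ via the composite projection $X_M \to X_L \to X$, which agrees with $X_M \to X$. Next I would invoke the compatibility of the weak triangulation with scalar extension, namely that $(S_L)_M = S_M$ and likewise $\Gamma_{(S_L)_M} = \Gamma_{S_M}$ (this is the canonical extension mentioned after Definition~\ref{defi:triangulation} together with its transitivity in the tower $K \subset L \subset M$). Consequently the connected component of $X_M \setminus \Gamma_{S_M}$ containing $\ti x$ is literally the same analytic domain whether we regard $X_M$ as built from $X$ or from $X_L$: that is, $D(\ti x, (S_L)_M) = D(\ti x, S_M)$ as subsets of $X_M$. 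Similarly the pull-back of $\pi_L^*(\Fs,\nabla)$ to $X_M$ coincides with the pull-back of $(\Fs,\nabla)$, since pull-back along $X_M \to X_L \to X$ is pull-back along $X_M \to X$.

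With these identifications in hand, the two quantities $\Rc_{S_L}(x, \pi_L^*(\Fs,\nabla))$ and $\Rc_S(y, (\Fs,\nabla))$ are computed by the \emph{same} procedure on the \emph{same} data: take the biggest open disc centred at $\ti x$ inside $X_M \setminus (S_L)_M = X_M \setminus S_M$, trivialise the (identical) pulled-back module on it, record the ratio of radii. Hence they are equal. The final step is to remark that this is legitimate because, as already asserted after Definition~\ref{def:radius}, $\Rc_S(\pi_L(x),(\Fs,\nabla))$ does not depend on the choice of auxiliary field and rational point, so computing it via $M$ and $\ti x$ gives the correct value; and likewise for $\Rc_{S_L}$, viewing $L$ as the base field.

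The main obstacle, such as it is, is purely bookkeeping: one must be careful that the canonical extensions of triangulations are functorial in the field in the sense that $(S_L)_M = S_M$, so that the skeletons and hence the discs $D(\ti x, \cdot)$ genuinely coincide rather than merely being isomorphic. Everything else is a formal consequence of the transitivity of base change for analytic spaces, for the modules-with-connection, and for the gauge $\ell$ (which is needed only if one uses the modulus-of-annulus description in the footnote, and which is itself canonical hence compatible with field extension). No delicate estimate or genuinely new idea is involved; this lemma records the well-definedness already alluded to in the text, made precise so that later arguments may freely extend scalars.
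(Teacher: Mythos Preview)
The paper does not give an explicit proof of this lemma; it is stated immediately after the remark (following Definition~\ref{def:radius}) that the radius of convergence is independent of the choices of~$L$ and~$\ti x$ and invariant by extension of the base field~$K$, and is evidently meant to be a direct restatement of that invariance. Your argument is exactly the natural unpacking of this assertion: pass to a common overfield~$M$, use the transitivity $(S_L)_M = S_M$ of the canonical extension of the weak triangulation, and observe that both sides are then computed by the same recipe on the same disc $D(\ti x, S_M)$ with the same pulled-back connection. This is correct and is what the paper leaves implicit.
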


Let us now explain how the function behaves with respect to changing triangulations. Let~$S'$ be a weak triangulation of~$X$ that contains~$S$. Let $x \in X$. Let~$L$ be a complete valued extension of~$K$ such that $X_{L}$ contains an $L$-rational point~$\ti{x}$ over~$x$. Inside~$X_{L}$, the disc $D(\ti{x},S'_{L})$ is included in $D(\ti{x},S_{L}) \simeq D_{L}^-(0,R)$. Let~$R'$ be its radius as a sub-disc of $D_{L}^-(0,R)$ and set $\rho_{S',S}(x) = R'/R \in (0,1]$. It is also the modulus of the semi-open annulus $D(\ti{x},S_{L}) \setminus D(\ti{x},S'_{L})$. Remark that the map~$\rho_{S',S}$ is constant and equal to~1 on~$S$, and even~$\Gamma_{S}$. It is now easy to check that
\begin{equation}\label{eq:rhoS'S}
\Rc_{S'}(x,(\Fs,\nabla)) = \min \left(\frac{\Rc_{S}(x,(\Fs,\nabla))}{\rho_{S',S}(x)},1\right).
\end{equation}

It is possible to describe in a very concrete way the behaviour of the function~$\rho_{S',S}$ on discs and annuli, hence on the whole~$X$. We deduce the following result. 

\begin{lemma}\label{lem:rhoS'S}
The map $x\in X \mapsto \rho_{S',S}(x)$ is continuous on~$X$, locally constant outside the skeleton~$\Gamma_{S'}$ and piecewise $\log$-linear on~$\Gamma_{S'}$ with slopes~$0$ or~$\pm 1$.
\end{lemma}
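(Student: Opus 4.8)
The plan is to reduce the statement to an explicit local computation of the function $\rho_{S',S}$ on the two types of building blocks of the triangulation, namely virtual open discs and virtual open annuli, and then to patch these local descriptions together. First I would observe that, by base change (invariance of the whole situation under $\pi_L$, as in lemma~\ref{lem:basechange} and the fact that $S_L'$ extends $S_L$), it suffices to treat the case where $K$ is algebraically closed, so that all virtual discs and annuli are honest discs and annuli and $\Gamma_{S'}$ is a genuine metric graph. Fix a point $x\in X$ and let $C$ be the connected component of $X\setminus S$ containing $x$; this $C$ is either an open disc or an open annulus. Since $S'\supseteq S$, the trace $S'\cap C$ is a finite subset of $C_{[2,3]}$, and the connected component of $X\setminus S'$ containing $x$ is simply the connected component of $C\setminus (S'\cap C)$ containing $x$.

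Next I would compute $\rho_{S',S}$ on each block. If $C\simeq D_K^-(0,R)$ is a disc, then adding finitely many type-$2$ or type-$3$ points $s_1,\dots,s_n$ corresponding to radii $r_1,\dots,r_n\in(0,R)$ cuts $C$; for $x$ with coordinate satisfying $|t(x)|=r$, the sub-disc $D(\ti x,S'_L)$ is the largest open disc around $\ti x$ missing all the $s_i$, and a direct inspection shows its radius is $R$ if $r$ is larger than every $r_i$ that lies ``below'' $x$ in the tree, and otherwise it is the distance from $x$ down to the nearest cut point on the segment joining $x$ to the generic direction; the upshot is that $\rho_{S',S}(x)=1$ for $x$ not separated from the ``top'' of the disc by any $s_i$, and $\rho_{S',S}(x)=r_i/R$ (a constant) on each little sub-disc hanging off a cut point, interpolating $\log$-linearly with slope $\pm 1$ along the skeleton segments in between. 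The annulus case $C\simeq C_K^-(0;R_1,R_2)$ is entirely analogous: inserting points at radii $R_1<r_1<\dots<r_n<R_2$ subdivides the skeleton into subsegments and attaches sub-discs; on each sub-disc $\rho_{S',S}$ is the constant $r_i/(\text{length parameter of }C)$ suitably normalised, it is $1$ on the portion of the skeleton of $C$ that survives in $\Gamma_{S'}$, and it is $\log$-linear with slope $0$ or $\pm 1$ on the rest. In both cases the formula is continuous because the boundary values match at the cut points (this is exactly the statement that the radius varies continuously as $x$ approaches an $s_i$), giving continuity on $C$ and hence, since the descriptions agree with the value $1$ near $S$ and the blocks glue along $S$, continuity on all of $X$.

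To assemble the global statement, I would note that $\Gamma_{S'}$ is by definition the union of the skeletons of the annulus components of $X\setminus S'$, together with $S'$ itself in the usual convention; away from $\Gamma_{S'}$ every point lies in the interior of one of the sub-discs produced above, on which $\rho_{S',S}$ was computed to be locally constant, so $\rho_{S',S}$ is locally constant on $X\setminus\Gamma_{S'}$. On $\Gamma_{S'}$, each segment is contained in the skeleton of some block $C$ of $X\setminus S$, on which $\rho_{S',S}$ was computed to be piecewise $\log$-linear with slopes in $\{0,\pm 1\}$, the breakpoints occurring at the points of $S'$ and $S$; combining finitely many such pieces keeps the slopes in $\{0,\pm 1\}$. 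Finally continuity across the points of $S'$ (where several branches meet) follows from the matching of one-sided values established block by block, using that $\rho_{S',S}$ equals $1$ along $\Gamma_S$ so that the blocks are compatibly normalised.

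The main obstacle, and the only place that needs genuine care rather than bookkeeping, is the explicit identification of the radius $R'$ of $D(\ti x,S'_L)$ as a sub-disc of $D_L^-(0,R)$ when $x$ ranges over a block: one must check that the ``largest disc around $\ti x$ avoiding the finite set $\Gamma_{S'_L}$'' is exactly described by the distance-to-the-nearest-cut-point formula, including the degenerate configurations where $x$ itself lies on $\Gamma_{S'}$ or where two cut points are nested. This is where the tree structure of the disc and the fact that $\Gamma_{S'}$ is a \emph{locally finite} graph are used; once that identification is in hand, continuity, local constancy off $\Gamma_{S'}$, and the slope bound on $\Gamma_{S'}$ all read off directly from the resulting piecewise formula, and the reduction to the algebraically closed case via~\ref{lem:basechange} takes care of the general field.
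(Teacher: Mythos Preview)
Your approach is correct and is precisely what the paper indicates: the paper gives no detailed proof, only the sentence ``It is possible to describe in a very concrete way the behaviour of the function~$\rho_{S',S}$ on discs and annuli, hence on the whole~$X$,'' and your block-by-block computation is exactly that concrete description. Two small points of phrasing to tighten: $S'\cap C$ is in general only \emph{locally} finite in~$C$ (weak triangulations allow non-relatively-compact components), and the added points in a disc are arbitrary $\eta_{c_i,r_i}$ rather than all lying on a single radial segment---but since your argument is local and you invoke the tree structure anyway, neither affects its validity.
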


\subsection{Analytic domains of the affine line}\label{sec:affineline}

Assume that~$X$ is an analytic domain of the affine line~$\A^{1,\an}_{K}$. The choice of a coordinate~$t$ on~$\A^{1,\an}_{K}$ provides a global coordinate on~$X$ and it seems natural to use it in order to measure the radii of convergence. We will call ``embedded'' the radii we define in this setting. 

Let us first give a definition that does not refer to any triangulation.

\begin{definition}\label{def:multiradiusemb}
Let~$x$ be a point of~$X$ and~$L$ be a complete valued extension of~$K$ such that $X_{L}$ contains an $L$-rational point~$\ti{x}$ over~$x$. Let $D(\ti{x},X_{L})$ be the biggest open disc centred at~$\ti{x}$ that is contained in~$X_{L}$.

Let us consider the pull-back~$(\ti{\Fs},\ti{\nabla})$ of~$(\Fs,\nabla)$ on $D(\ti{x},X_{L})$. We denote $\Rc^\mathrm{emb}(x,(\Fs,\nabla))$ the radius of the biggest open disc centered at~$\ti{x}$, measured using the coordinate~$t$ on~$\A^{1,\an}_{L}$, on which $(\ti{\Fs},\ti{\nabla})$ is trivial.
\end{definition}

The definition of~$\Rc^\mathrm{emb}(x,(\Fs,\nabla))$ only depends on the point~$x$ and not on~$L$ or~$\ti{x}$. 

\medskip

Let us now state a second definition that takes into account the weak triangulation~$S$ of~$X$.

\begin{definition}\label{def:multiradiusembT}
Let~$x$ be a point of~$X$ and~$L$ be a complete valued extension of~$K$ such that $X_{L}$ contains an $L$-rational point~$\ti{x}$ over~$x$. As in definition~\ref{def:DxS}, consider $D(\ti{x},S_{L})$, the biggest open disc centred at~$\ti{x}$ that is contained in $X_{L}\setminus S_{L}$. We denote~$\rho_{S}(x)$ its radius, measured using the coordinate~$t$ on~$\A^{1,\an}_{L}$. 

Let us consider the pull-back~$(\ti{\Fs},\ti{\nabla})$ of~$(\Fs,\nabla)$ on $D(\ti{x},S_{L})$. We denote $\Rc^\mathrm{emb}_{S}(x,(\Fs,\nabla))$ the radius of the biggest open disc centered at~$\ti{x}$, measured using the coordinate~$t$ on~$\A^{1,\an}_{L}$, on which $(\ti{\Fs},\ti{\nabla})$ is trivial.
\end{definition}

Once again, the definitions of~$\rho_{S}(x)$ and~$\Rc^\mathrm{emb}_{S}(x,(\Fs,\nabla))$ are independent of the choices of~$L$ and~$\ti{x}$. 

\medskip

The radii we have just defined may easily be linked to the one we introduced in definition~\ref{def:radius}. For the second radius, we have
\begin{equation}\label{eq:radiusembT}
\Rc_{S}(x,(\Fs,\nabla)) = \frac{\Rc^\textrm{emb}_{S}(x,(\Fs,\nabla))}{\rho_{S}(x)}.
\end{equation}

Assume that~$X$ is not the affine line and let~$S_{0}$ be its smallest weak triangulation. We have
\begin{equation}\label{eq:radiusembT0}
\Rc_{S_{0}}(x,(\Fs,\nabla)) = \frac{\Rc^\textrm{emb}_{S_{0}}(x,(\Fs,\nabla))}{\rho_{S_{0}}(x)} =  \frac{\Rc^\textrm{emb}(x,(\Fs,\nabla))}{\rho_{S_{0}}(x)}.
\end{equation}

The different radii satisfy similar properties thanks to the following result.

\begin{lemma}\label{lem:rho}
The map $x\in X \mapsto \rho_{S}(x)$ is continuous on~$X$, locally constant outside the skeleton~$\Gamma_{S}$ and piecewise $\log$-linear on~$\Gamma_{S}$ with slopes~$0$ or~$\pm 1$.
\end{lemma}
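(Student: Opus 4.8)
The plan is to reduce everything to the explicit local behaviour of the radius $\rho_S$ on the two kinds of pieces in the decomposition $X = S \sqcup (X \setminus S)$, namely virtual open discs and virtual open annuli. After base change to a suitable complete algebraically closed extension $L$ (which is harmless: both continuity and piecewise $\log$-linearity descend along the proper surjection $\pi_L$, and $S_L$ is a weak triangulation of $X_L$), we may assume $K$ algebraically closed and that $X$ is an analytic domain of $\A^{1,\mathrm{an}}_K$ with coordinate $t$, so that every connected component of $X \setminus S$ is literally an open disc or an open annulus embedded in the line.

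First I would treat the interior of a component $C$ of $X \setminus S$. If $C = D_L^-(c,R)$ is an open disc, then for any point $x \in C$ one checks directly from Definition~\ref{def:DxS} that $D(\ti x, S_L)$ is $C$ itself, so $\rho_S(x) = R$ is constant on $C$; in particular it is locally constant off $\Gamma_S$ there, since such a disc contains no skeleton. If $C = C_L^-(c;R_1,R_2)$ is an open annulus with skeleton $\Gamma_C$, then for $x \in C$ the disc $D(\ti x,S_L)$ is the largest open disc in $C$ centred at $\ti x$: if $x$ lies on $\Gamma_C$ at radius $r = |(t-c)(x)| \in (R_1,R_2)$ then this disc is $D_L^-(\ti x, \min(r - R_1, R_2 - r))$ — wait, more precisely its radius with respect to $t$ is $\min(r-R_1,\, R_2 - r)$ only after identifying $r - R_1$ with the inner gap; in any case it is $\min$ of the distances to the two ends, which is continuous and piecewise $\log$-linear in $r$ (here, because radii are measured with the coordinate $t$, the two branches are $r \mapsto r - R_1$ near the outer end and $r \mapsto R_2 - r$ near the inner end, contributing slopes $+1$ and $-1$ respectively once we pass to $\log$); for $x$ off $\Gamma_C$, the disc $D(\ti x,S_L)$ coincides with the one attached to the retraction of $x$ onto $\Gamma_C$, so $\rho_S$ is locally constant there and equals its value on $\Gamma_C$.

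Next I would handle the points of $S$ themselves and glue. A point $s \in S$ has finitely many branches emanating from it, each lying in some component of $X \setminus S$, and $\rho_S$ restricted to each branch is one of the functions just described, which extends continuously to $s$ with value $0$ along a disc-branch whose boundary is $s$, or with a finite limit along an annulus-branch. Continuity at $s$ then follows because $X$ is, locally at a point of type $2$, a finite union of such branches glued at $s$ (this is exactly the local structure provided by the triangulation), and $\rho_S$ is continuous on each. For type~$3$ points, which lie in the interior of an annulus skeleton, continuity is already covered above, and type~$1$ and type~$4$ points are interior to open discs where $\rho_S$ is constant. Assembling: $\rho_S$ is continuous on $X$; it is locally constant on $X \setminus \Gamma_S$ because every such point has a neighbourhood inside a single disc or in the ``radial'' part of a single annulus, on which we showed constancy; and its restriction to $\Gamma_S$ is, componentwise, the piecewise-$\log$-linear function $r \mapsto \min(\text{gaps to the two ends})$, hence piecewise $\log$-linear with slopes $0$ or $\pm 1$ (the slope $0$ occurring when a branch of $\Gamma_S$ reaches a point of $S$ from an annulus on which one gap is locally constant, or more simply when one passes through a vertex).

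The main obstacle, I expect, is not any single estimate but the bookkeeping around points of $S$: one must verify carefully that the disc $D(\ti x, S_L)$ of Definition~\ref{def:DxS} really does degenerate to radius $0$ as $x$ approaches $s$ along a disc-branch ending at $s$, and that along an annulus-branch its $t$-radius tends to the inner or outer bound of that annulus; this requires using the precise definition of the canonical gauge together with the fact (Definition~\ref{defi:modulus}, and its relation to $t$-radii in the embedded setting) that the modulus of a sub-annulus is computed as a ratio of $t$-radii. Once the limiting values along all branches at $s$ are identified and seen to agree, continuity is immediate, and the local-constancy and slope statements are then purely local computations on discs and annuli. I would also note that the argument is entirely parallel to — indeed a special case of — the analysis that yields Lemma~\ref{lem:rhoS'S}, with the pair $(S', S)$ replaced by $(S, \emptyset)$ in spirit; so one could alternatively phrase the proof as a direct reduction to that lemma applied on each component of $X \setminus S_0$ after adjoining a point of $S$.
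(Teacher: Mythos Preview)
The paper states this lemma without proof, so there is no explicit argument to compare against; your overall strategy of computing $\rho_{S}$ on each connected component of $X\setminus S$ and checking compatibility at the points of~$S$ is exactly what is intended. However, two of your local computations are incorrect, and both stem from archimedean rather than ultrametric intuition.

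First, on an open annulus $C^-_L(c;R_1,R_2)$: if $\ti x$ is an $L$-rational point with $|(t-c)(\ti x)|=r\in(R_1,R_2)$, the largest open disc centred at~$\ti x$ and contained in the annulus is $D^-_L(\ti x,r)$, of radius exactly~$r$, not $\min(r-R_1,R_2-r)$. Indeed, by the ultrametric inequality every $y$ with $|y-\ti x|<r$ satisfies $|y-c|=r$, so lies in the annulus; whereas any open disc of radius $>r$ about~$\ti x$ contains~$c$ itself. Thus $\rho_S(x)=|(t-c)(x)|$ for every $x$ in the annulus (this is precisely the formula recorded in the proof of Lemma~\ref{lem:rhoh}), and on the skeleton $\log\rho_S$ is genuinely linear of slope~$\pm 1$, not merely piecewise linear with a break in the middle. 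Note too that your proposed formula $\min(r-R_1,R_2-r)$ is not $\log$-linear in~$r$, so it could not yield the stated conclusion in any case.

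Second, the limiting value along a disc branch is not~$0$. If $D^-(c,R)$ is a disc component of $X\setminus S$ with boundary point $s\in S$, then $\rho_S\equiv R$ on this disc, so the limit at~$s$ is~$R$. Continuity at~$s$ then amounts to the observation that every disc branch at~$s$ has the same radius (they are all residue discs at a point of the form $\eta_{c,R}$), that every adjacent annulus has $r\to R$ along its skeleton as one approaches~$s$, and that $\rho_S(s)=R$ as well (computed by lifting~$s$ to a rational point of $X_L$ and noting that it lies in a residue disc of radius~$R$ in $X_L\setminus S_L$). Once these formulas are corrected the rest of your outline goes through.
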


\subsection{Computation in coordinates}

We now present a concrete way to compute the radius of convergence. Consider an open disc $D = D^-(0,R)$ endowed with the empty weak triangulation and choose a coordinate~$t$ on it. Endow $\Os(D)$ with the usual derivation $d = \mathrm{d}/\mathrm{d}t$. Assume that~$\Fs$ is free of rank~$m$ on~$D$.  In this case, the connection~$\nabla$ on~$D$ may be given by a matrix $G \in M_{m}(\Os(D))$. 

Let $x\in D(K)$. We can compute the Taylor series of a fundamental solution matrix in the neighbourhood of the point~$x$:
\[\sum_{n\ge 0} \frac{G_{n}(x)}{n!}\, (t-t(x))^n,\]
where $G_{0} = \textrm{Id}$, $G_{1}=G$ and, for every $n\ge 1$, $G_{n+1} = d(G_{n}) + G_{n}\, G$. Then the radius of convergence at~$x$ may be computed by the following formula:
\begin{equation}\label{eq:formularadius1}
\Rc_{\emptyset}(x,(\Fs,\nabla)) = \min\left( \frac{1}{R}\, \liminf_{n\ge 1} \left(\left|\frac{G_{n}(x)}{n!}\right|^{-\frac{1}{n}}\right),1\right).
\end{equation}
Since the matrices~$G_{n}$ stay the same if we enlarge the field~$K$, the formula actually holds for any point~$x$ of~$D$.

Even more generally, assume that~$X$ is an analytic domain on the affine line different from the affine line and that~$\Fs$ is free on it. Choose a coordinate~$t$ on~$\E{1}{K}$ and endow $\Os(X)$ with the usual derivation $d = \mathrm{d}/\mathrm{d}t$. We may define a sequence~$(G_{n})_{n\ge 0}$ of matrices as above and check that, for any~$x\in X$, we have
\begin{equation}\label{eq:RdT0}
\Rc_{S_{0}}(x,(\Fs,\nabla)) = \min\left( \frac{1}{\rho_{S_{0}}(x)}\, \liminf_{n\ge 1} \left(\left|\frac{G_{n}(x)}{n!}\right|^{-\frac{1}{n}}\right),1\right),
\end{equation}
where~$S_{0}$ is the smallest weak triangulation of~$X$.

\medskip

Let us now return to the case of the open disc $D = D^-(0,R)$ as above. In general, the sheaf~$\Fs$ need not be free on it (unless~$K$ is maximally complete, see~\cite{Lazard}). On the other hand, it is free on any disc $D^+(0,r)$, with $r<R$, since $\Os(D^+(0,r))$ is principal. For any $r\in [0,R)$ and any $n\ge 0$, we denote~$G_{r,n}$ the matrix associated to the restriction of~$\nabla^n$ to~$D^+(0,r)$. We set
\begin{equation}\label{eq:Rr}
\Rc_{\emptyset,r}(x,(\Fs,\nabla)) = \min\left(\frac{1}{R}\, \liminf_{n\ge 1} \left(\left|\frac{G_{r,n}(x)}{n!}\right|^{-\frac{1}{n}}\right),\frac{r}{R}\right).
\end{equation}
Then, it is easy to check that the map $r\in [0,R) \mapsto \Rc_{\emptyset,r}(x,(\Fs,\nabla))$ is non-decreasing and that, for any $x\in D$, we have
\begin{equation}\label{eq:R1}
\Rc_{\emptyset}(x,(\Fs,\nabla)) = \lim_{r\to R^-} \Rc_{\emptyset,r}(x,(\Fs,\nabla)).
\end{equation}

\medskip

Let us now finally turn back to the general case where~$X$ is a quasi-smooth curve. Let $x\in X$. Let~$L$ be a complete valued extension of~$K$ such that $X_{L}$ contains an $L$-rational point~$\ti{x}$ over~$x$. The radius of convergence~$\Rc_{S}(x,(\Fs,\nabla))$ is computed by pulling back~$(\Fs,\nabla)$ to~$(\ti\Fs,\ti\nabla)$ on~$D(\ti{x},S_{L})$. It follows from the definition that 
\begin{equation}
\Rc_{S}(x,(\Fs,\nabla)) = \Rc_{\emptyset}(x,(\ti\Fs,\ti\nabla)),
\end{equation} 
where the latter radius is computed on~$D(\ti{x},S_{L})$ endowed with the empty weak triangulation. Hence the formulas of the preceding paragraph may be used in the general case.

The following result may now be easily proven.

\begin{theorem}\label{thm:Cauchypadique}
For any $x\in X(K)$, we have $\Rc_{S}(x,(\Fs,\nabla)) >0$. In particular, the function $\Rc_{S}(\cdot,(\Fs,\nabla))$ is constant in the neighbourhood of any rational point of~$X$.
\end{theorem}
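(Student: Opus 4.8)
The plan is to reduce to the local computation in coordinates from subsection~\ref{sec:radius} and to apply the $p$-adic Cauchy theorem on a disc. Let $x \in X(K)$. Since the weak triangulation $S$ is locally finite, the point $\ti x = x$ lies in the open disc $D(x, S)$, which is a connected component of $X \setminus \Gamma_{S}$; after choosing a coordinate we may identify it with $D^-(0,R)$ with $t(x) = 0$, and by the reduction at the end of subsection~\ref{sec:radius} we have $\Rc_{S}(x,(\Fs,\nabla)) = \Rc_{\emptyset}(x,(\ti\Fs,\ti\nabla))$ computed on this disc endowed with the empty weak triangulation. Now $\Fs$ need not be free on all of $D^-(0,R)$, but it is free on each closed sub-disc $D^+(0,r)$ for $r < R$, since $\Os(D^+(0,r))$ is a principal ideal domain. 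So I would fix any such $r$ with $0 < r < R$ (and $r$ large enough that the final answer is nontrivial, see below), take the matrix $G = G_{r,1}$ of the connection on $D^+(0,r)$, and form the sequence $(G_{r,n})_{n \ge 0}$ defined by $G_{r,0} = \mathrm{Id}$, $G_{r,1} = G$, $G_{r,n+1} = d(G_{r,n}) + G_{r,n}\,G$.

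The heart of the matter is the standard $p$-adic estimate on the growth of the $G_{r,n}$. Since $G \in M_{m}(\Os(D^+(0,r)))$ and $\Os(D^+(0,r))$ is the ring of functions bounded on $D^+(0,r)$, the entries of $G$ have a finite sup-norm $\|G\|_{r}$ on $D^+(0,r)$; moreover the derivation $d = \mathrm{d}/\mathrm{d}t$ satisfies $\|d(f)\|_{\rho} \le \rho^{-1}\|f\|_{\rho}$ on each circle of radius $\rho \le r$. A straightforward induction on the recurrence $G_{r,n+1} = d(G_{r,n}) + G_{r,n} G$ then gives a bound of the shape $\|G_{r,n}/n!\|_{\rho} \le C\, (A/\rho)^{n}$ for suitable constants $C, A > 0$ depending only on $\|G\|_{r}$ and $r$, valid for all $\rho \in (0,r]$; evaluating at the point $x$ (for which $|{\cdot}|(x)$ is bounded by the sup-norm on any $D^+(0,\rho)$ containing $x$, and here $x = 0$) yields $|G_{r,n}(x)/n!| \le C (A/\rho)^{n}$, hence $\liminf_{n} |G_{r,n}(x)/n!|^{-1/n} \ge \rho/A > 0$. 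Feeding this into formula~\eqref{eq:Rr} shows $\Rc_{\emptyset,r}(x,(\Fs,\nabla)) > 0$, provided also $r > 0$, which we arranged; and then~\eqref{eq:R1} gives $\Rc_{\emptyset}(x,(\ti\Fs,\ti\nabla)) = \lim_{r \to R^-}\Rc_{\emptyset,r}(x,(\Fs,\nabla)) > 0$. By~\eqref{eq:radiusembT} and the identifications above, $\Rc_{S}(x,(\Fs,\nabla)) > 0$.

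For the second assertion, note that $\Rc_{S}(x,(\Fs,\nabla)) > 0$ means precisely that $(\ti\Fs,\ti\nabla)$ becomes trivial on some open disc $D^-(0,\varepsilon)$ with $\varepsilon > 0$ centred at $x$. But for every rational point $y$ in that small disc, the biggest disc $D(\ti y, S_{L})$ still contains $D^-(0,\varepsilon)$ shrunk appropriately, and on a disc of radius $\varepsilon$ around $y$ the connection is again trivial (a disc on which the module is trivial, restricted and recentred, stays trivial). Comparing radii, one checks that the quotient defining $\Rc_{S}$ is locally constant: concretely, for $y$ close enough to $x$, both $\rho_{S}$-type normalisation factors and the embedded radius agree with their values at $x$ up to the identification of the two discs. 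Hence $\Rc_{S}(\cdot,(\Fs,\nabla))$ is constant near $x$.

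The main obstacle I anticipate is purely bookkeeping rather than conceptual: one must be careful that the sup-norm $\|G\|_{r}$ is genuinely finite (this is where $\Os(D^+(0,r))$ being the \emph{bounded} functions, equivalently a principal ideal domain with a well-behaved Gauss norm, is used) and that the constants in the induction do not blow up as $r \to R^-$ in a way that would spoil~\eqref{eq:R1} — but since we only need \emph{positivity} of the limit, and the bound $\rho/A$ is uniform for $\rho$ bounded away from $0$, no quantitative control near $R$ is required. The only genuine input beyond linear algebra is the elementary fact $\|d(f)\|_{\rho} \le \rho^{-1}\|f\|_{\rho}$, which is immediate from term-by-term differentiation of the power series expansion of $f$ on the circle of radius $\rho$.
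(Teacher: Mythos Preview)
Your proof is correct and follows essentially the same approach as the paper: reduce to a closed sub-disc $D^+(0,r)$ where $\Fs$ is free, and bound the growth of $\|G_{r,n}\|$ via the recurrence $G_{r,n+1} = d(G_{r,n}) + G_{r,n}\,G$ together with the boundedness of the derivation on that disc. The paper's write-up is marginally cleaner in that it works directly with the sup-norm on $D^+(0,r)$ (no auxiliary radius $\rho$) and separates the estimate $\|G_{r,n}\|^{1/n} \le \max(\|d\|,\|G_{r,1}\|)$ from the convergence of $(|n!|^{1/n})_{n}$, but the substance is identical; your treatment of the ``in particular'' clause, which the paper leaves implicit, is also fine once you note that in the non-archimedean setting any point of an open disc is a centre, so no shrinking is actually needed.
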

\begin{proof}
We may use the setting of formula~(\ref{eq:R1}). Let $r\in (0,R)$. We denote~$\|.\|$ the norm on~$D^+(0,r)$. It is enough to show that the sequence $\big((\|G_{r,n}\| / |n !|)^{1/n}\big)_{n\ge 1}$ is bounded. It is well known that the sequence $(|n!|^{1/n})_{n\ge 1}$ converges, hence it is enough to show that $(\|G_{r,n}\|^{1/n})_{n\ge 1}$ is bounded. For any $n\ge 1$, we have $G_{r,n+1} = d(G_{r,n}) + G_{r,n}\, G_{r,1}$, hence $\|G_{r,n+1}\| \le \max(\|d\|, \|G_{r,1}\|) \, \|G_{r,n}\|$. We deduce that, for any $n\ge 1$, we have $\|G_{r,n}\|^{1/n} \le \max(\|d\|, \|G_{r,1}\|)$. 
\end{proof}

\medskip

Let us now try to carry out computations similar to those of formula~(\ref{eq:formularadius1}) around an arbitrary point of~$X$. Let~$U$ be an analytic domain of~$X$ on which the sheaves~$\Fs$ and~$\Omega_{X}$ are free (such an analytic domain exists in the neighbourhood of any point). Let~$d$ be a derivation on~$\Os(U)$. Let~$G$ be the matrix associated to the connection~$(\Fs,\nabla)$. For every $x\in U$, we can now define 
\begin{equation}\label{eq:radiusd}
\Rc^d(x,(\Fs,\nabla)) = \liminf_{n\ge 1} \left(\left|\frac{G_{n}(x)}{n!}\right|^{-\frac{1}{n}}\right).
\end{equation}
However, it is not clear how this relates to the radius of convergence~$\Rc_{S}(x,(\Fs,\nabla))$. We will study this question later (see corollary~\ref{cor:Rdbranch} and remark~\ref{rem:Rdx}).

\section{The result}

Now that we have made precise the meaning of radius of convergence of~$(\Fs,\nabla)$ at any point of the curve~$X$ (see definition~\ref{def:radius}), we start investigating its properties.

\subsection{Statement}

Let us state precisely the result we are interested in. To this end, we need to extend the notion of $\log$-linearity beyond~$X_{[2,3]}$.

\begin{definition}\label{defi:loglinear}
Let~$J$ be a segment of~$X$. A map $f : J \to \R$ is said to be linear if it is continuous and linear on the interior~$\mathring{J}$ of~$J$.

A map $f : \Gamma\to \R$ on a locally finite subgraph~$\Gamma$ of~$X$ is said to be piecewise linear if~$\Gamma$ may be covered by a locally finite family~$\Js$ of segments such that, for any~$J \in \Js$, the restriction of the map~$f$ to~$\mathring{J}$ is linear.

A map with values in~$\R_{+}^*$ is said to be $\log$-linear if its logarithm is linear.
\end{definition}

\begin{theorem}\label{thm:continuousandfinite}
The map
\[x \in X \mapsto \Rc_{S}(x,(\Fs,\nabla)) \in \R_{+}^*\]
satisfies the following properties:
\begin{enumerate}[\it i)]
\item it is continuous;
\item it is locally constant outside a locally finite subgraph~$\Gamma$ of~$X$;
\item its restriction to~$\Gamma$ is piecewise $\log$-linear and, for any connected subgraph~$\Gamma_{c}$ of~$\Gamma$, its slopes on~$\Gamma_{c}$ are rational numbers of the form~$\pm m/i$,  with $m\in\Z$, \mbox{$1\le i \le \textrm{rk}\, (\Fs_{|\Gamma_{c}})$}. 
\end{enumerate}
\end{theorem}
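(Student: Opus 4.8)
The plan is to follow the strategy outlined in the introduction: reduce to a local statement at points of type~2, produce a super-harmonic description of $\log \Rc_S$ near such a point, and then extract the finiteness and continuity from general properties of super-harmonic functions (whose Laplacians are Radon measures and whose restrictions to segments are continuous). Throughout I will use the behaviour of the radius of convergence on an interval inside a disc or annulus recalled in the introduction (continuity, concavity, piecewise $\log$-linearity with controlled slopes, theorem~\ref{thm:Kedlaya}), together with the elementary properties of $\rho_S$ and $\rho_{S',S}$ from lemmas~\ref{lem:rho} and~\ref{lem:rhoS'S} and formulas~(\ref{eq:radiusembT}) and~(\ref{eq:rhoS'S}), which allow one to pass freely between $\Rc_S$, $\Rc^{\mathrm{emb}}_S$, and radii computed for a different triangulation.

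First I would establish the statement locally at every point $x$ of type~2. By the remarks in subsection~\ref{sec:radius} the question is invariant under base change and under changing the triangulation (up to the explicitly understood factor $\rho_{S',S}$), so I may assume $K$ algebraically closed and work on a small affinoid neighbourhood $Y$ of $x$ on which $\Fs$ and $\Omega_X$ are free. Using geometric arguments of Ducros-type one finds a coordinate/derivation on $Y$ such that, outside finitely many branches emanating from $x$, the radius $\Rc^{\mathrm{emb}}$ is computed by a formula of type~(\ref{eq:formularadius1}), i.e.\ as $\liminf_n(|G_n/n!|^{-1/n})$ with $G_n \in \Os(Y)$. The logarithm of $|f|$ for $f\in\Os(Y)$ is harmonic away from the zeros of $f$ and super-harmonic in general; taking a $\liminf$ over $n$ and passing to the lower semicontinuous envelope preserves super-harmonicity, so $\log \Rc^{\mathrm{emb}}$ (hence, by lemma~\ref{lem:rho}, $\log \Rc_S$, which differs from it by the piecewise $\log$-linear function $\log\rho_S$) coincides on a possibly smaller affinoid with a super-harmonic function $u$, at least outside a finite set of branches at $x$.

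Next I would run the finiteness argument. Since $u$ is super-harmonic on $Y$, its Laplacian $\Delta u$ is a Radon measure on $Y$; in particular, for $x$ of type~2, the sum over all directions $\vec{t}$ at $x$ of the outgoing slopes of $u$ is finite. On the other hand, by theorem~\ref{thm:Kedlaya} the non-zero slopes of $\log \Rc_S$ along any branch are rational with bounded denominators, hence bounded below in absolute value. Combining these two facts, only finitely many directions at $x$ carry a non-zero slope of $\log\Rc_S$; together with the finitely many exceptional branches from the local construction, we get finiteness of the "non-constancy locus" direction-wise at every type~2 point. Exactly as in~\cite{finiteness}, patching this over a locally finite set of type~2 points (the triangulation~$S$ can be enlarged to catch them) yields a locally finite subgraph $\Gamma \supseteq \Gamma_S$ of $X$ outside of which $\Rc_S$ is locally constant; this gives~(ii). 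The slope bound in~(iii) on a connected subgraph $\Gamma_c$ comes from theorem~\ref{thm:Kedlaya} applied on the intervals composing $\Gamma_c$, noting that the rank of $\Fs$ is locally constant along $\Gamma_c$ so the denominators are controlled by $\operatorname{rk}(\Fs_{|\Gamma_c})$, and piecewise $\log$-linearity is likewise the content of theorem~\ref{thm:Kedlaya} plus local finiteness of $\Gamma$.

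Finally, continuity~(i): by~(ii) it suffices to prove that $\Rc_S$ is continuous on $\Gamma$, and by local finiteness of $\Gamma$ it suffices to check continuity at each point $x\in\Gamma$ along each segment ending at $x$. If $x$ has type~3 or~4 this follows directly from the fact that $\Rc_S$ restricted to a segment is continuous there (again theorem~\ref{thm:Kedlaya} / continuity on intervals in a disc or annulus). If $x$ has type~2, I use the super-harmonic description: the restriction of a super-harmonic function to a segment through a type~2 point is continuous at that point (this is the "another general property of super-harmonic functions" invoked in the introduction, following from the Laplacian being a measure with no mass escaping, i.e.\ concavity-type semicontinuity on both sides). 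Since $\log\Rc_S$ agrees with such a function outside finitely many branches, and on the finitely many exceptional branches one argues separately using that $\Rc_S$ is $\le 1$ and the radius on an annulus-segment is controlled, one concludes continuity at $x$ along every direction, hence continuity on $\Gamma$, hence on $X$.

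The main obstacle I expect is the local step: rigorously producing, around an arbitrary type~2 point, a derivation and an identification of $\Rc_S$ with $\Rc^d$ (cf.\ formula~(\ref{eq:radiusd}) and the forthcoming corollary~\ref{cor:Rdbranch}) valid outside a controlled finite set of branches, and checking that the resulting $\liminf$/lower-semicontinuous-envelope is genuinely super-harmonic rather than merely upper bounded. The passage through the boundary-free (or overconvergent) hypothesis enters precisely here: at points of the boundary the Laplacian sees too few directions, so the super-harmonicity input — and with it the finiteness of outgoing slopes — breaks down, which is why that hypothesis cannot be dropped by this method.
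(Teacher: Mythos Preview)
Your overall strategy matches the paper's: base-change, local super-harmonic description near type~2 points via a suitable derivation (corollary~\ref{cor:Rdbranch}), finiteness from the Radon-measure property of the Laplacian combined with the lower bound on non-zero slopes, and then continuity on the resulting graph. The finiteness argument (ii) and the slope statement (iii) are essentially correct as you sketch them.

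There is, however, a genuine gap in your continuity argument at a type~2 point~$x$. Corollary~\ref{cor:Rdbranch} only identifies $\Rc_S$ with $\min(\Rc^d/R,1)$ on almost every connected component of $Y\setminus\{x\}$, \emph{not at the point~$x$ itself}. So when you invoke continuity of the super-harmonic function $\min(\log(\Rc^d)^*,\log R)$ at~$x$ along a segment~$I$, you obtain the limit $\lim_{z\to x,\,z\in I}\Rc_S(z)$, but you have no a~priori reason to know that this limit equals $\Rc_S(x)$. Your phrase ``$\log\Rc_S$ agrees with such a function outside finitely many branches'' conflates agreement on the branches with agreement at~$x$; the latter is precisely what is at stake (and is only established \emph{a posteriori}, see remark~\ref{rem:Rdx}).

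The paper closes this gap by a base-change trick (corollary~\ref{cor:continuousinterval}): pass to an algebraically closed complete extension $L\supseteq\Hs(x)$, pick a point~$y$ of~$S_L$ above~$x$, and choose a branch~$b_0$ out of~$y$ lying \emph{inside the fibre} $\pi_L^{-1}(x)$. Along any segment in this fibre, $\Rc_{S_L}$ is constant equal to $\Rc_S(x)$ by lemma~\ref{lem:basechange}, so applying corollary~\ref{cor:Rdbranch} at~$y$ with~$b_0$ and taking the limit identifies $\Rc_S(x)$ with $\min(\Rc^{d_0}(y)/R_0,1)$. One then compares this with the pulled-back derivation $d_L$ coming from the original branch~$b$ by finding a common ``good'' component of $Y_0\setminus\{y\}$ and $Y_L\setminus\{y\}$ on which both descriptions hold, and matches the two limits at~$y$. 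This genericity-via-base-change step is the missing idea in your proposal.
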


\begin{remark}\label{rem:factorisation}
Let us enlarge~$\Gamma$ to a locally finite subgraph~$\Gamma'$ of~$X$ such that
\begin{enumerate}[\it i)]
\item $\Gamma'$ contains $\Gamma_{S}$ ;
\item $\Gamma'$ meets every connected component of~$X$ ;
\item for any connected component~$V$ of~$X\setminus S$, the graph~$\Gamma'\cap V$ is convex.  
\end{enumerate}
In this case, there is a natural continuous retraction $X \to \Gamma'$ and the map $\Rc_{S}(\cdot,(\Fs,\nabla))$ factorizes by it. 
\end{remark}

In the following, we will give a new proof of the theorem assuming that~$X$ is boundary-free or that $(\Fs,\nabla)$ is overconvergent.

\medskip

As we mentioned in the introduction, the contents of theorem~\ref{thm:continuousandfinite} already appeared in the literature:
\begin{enumerate}[\it i)]
\item The continuity property on the skeleton of an annulus is due to G.~Christol and B.~Dwork (see~\cite[th\'eor\`eme~2.5]{ChristolDwork}). It has been extended to affinoid domains of the affine line by F.~Baldassarri and L.~Di Vizio (see~\cite{ContinuityBDV}) and to general curves by F.~Baldassarri (see~\cite{ContinuityCurves}). His setting is actually slightly less general than ours, but his result extends easily. The second author also proved the continuity of all the radii of convergence (\textit{i.e.} all the slopes of the Newton polygon) on affinoid domains of the affine line by another method (see~\cite{finiteness}). It has been extended to curves by both authors (see~\cite{finiteness2}).
\item The local constancy outside a locally finite subgraph has been proven for all the radii of convergence on an affinoid domain of the affine line by the second author (see~\cite{finiteness}) and then extended to general curves by both authors (see~\cite{finiteness2}).
\item The last property has been proven by \'E.~Pons (in a weaker form) for the skeleton of an annulus (see~\cite[th\'eor\`eme~2.2]{PonsPadova}) and by F.~Baldassarri for an interval inside the skeleton of a curve (see~\cite[corollary~6.0.6]{ContinuityCurves}). In the case of the skeleton of an annulus again, K.~Kedlaya extended it to the other radii of convergence (see~\cite[theorem~11.3.2]{pde}). 
\end{enumerate}

In the sequel, we will use the fact that the result is already known for the restriction of the radius to intervals inside discs and annuli. For future reference, let us write it down explicitly.

\begin{theorem}\label{thm:Kedlaya}
Assume that~$X$ is an open or closed annulus (possibly a disc). Fix a coordinate~$t$ on~$X$ and let $d = \textrm{d}/\textrm{d}t$ be the usual derivation on~$\Os(X)$. Assume that~$(\Fs,\nabla)$ comes from a global differential module~$(M,D)$ on~$(\Os(X),d)$.

If~$X$ is an annulus, let~$J$ be its skeleton. If~$X$ is a disc, pick a point $c \in X(K)$ and let~$J$ be the interval in~$X$ that joins~$c$ to its boundary. 

Then, the restriction of the map $\Rc^\mathrm{emb}(\cdot,(\Fs,\nabla))$ to~$J$ is concave, continuous and piecewise $\log$-linear with slopes that are rational numbers of the form~$m/i$, with $m\in \Z$ and $1\le i \le \textrm{rk}\, (\Fs)$.
\end{theorem}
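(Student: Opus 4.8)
The plan is to reduce Theorem~\ref{thm:Kedlaya} to results already in the literature, after first reconciling the various notions of radius introduced in the paper. Since $X$ is a disc or an annulus, the canonical gauge $\ell$ on $X_{[2,3]}$ agrees (up to the trivial identification) with $\log$ of the modulus, so the skeleton $J$ carries its natural length parametrisation and $\log$-linearity on $J$ has its naive meaning. The first thing I would do is observe that on a disc or annulus, the embedded radius $\Rc^{\mathrm{emb}}(\cdot,(\Fs,\nabla))$ differs from the ``intrinsic'' radius $\Rc^{\mathrm{emb}}_S(\cdot,(\Fs,\nabla))$ only through the elementary factor $\rho_S$ described in Lemma~\ref{lem:rho} (or, when $X$ is the affine line, not at all); in either case the passage between them does not affect concavity or the slope bound, being piecewise $\log$-linear with integer slopes. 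So it suffices to establish the three assertions for whichever variant is most convenient.

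Next I would invoke the attribution paragraph preceding the statement: the concavity, continuity and piecewise $\log$-linearity of $\Rc^{\mathrm{emb}}(\cdot,(\Fs,\nabla))$ on $J$, together with the slope bound $m/i$ with $1 \le i \le \operatorname{rk}(\Fs)$, are exactly the content of \'E.~Pons's and F.~Baldassarri's theorems (\cite{PonsPadova}, \cite{ContinuityCurves}) in the rank-one-slope case, and of K.~Kedlaya's Theorem~11.3.2 of \cite{pde} in general. The one point requiring a little care is the dictionary between Kedlaya's normalisations and ours: Kedlaya works on annuli (and, via a limiting procedure, discs) with the subsidiary radii $R_j$ attached to a differential module over a ring of the form $\Os(X)$ with derivation $t\,\mathrm{d}/\mathrm{d}t$ or $\mathrm{d}/\mathrm{d}t$, and the quantity he calls the ``intrinsic generic radius of convergence'' has to be matched with our $\Rc^{\mathrm{emb}}$ after accounting for the rescaling by the coordinate. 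I would spell out this comparison just enough to see that his $\log$-affine-with-controlled-slopes statement transports to ours, and that the smallest subsidiary radius is the one that computes $\Rc^{\mathrm{emb}}$.

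For the disc case, the clean route is to deduce it from the annulus case by exhaustion: write the open disc $D^-(0,R)$ as an increasing union of closed annuli $C^+(0;r,r')$ (or use the closed sub-discs $D^+(0,r)$ and the interval from $c$ to their boundaries), apply the annulus statement on each, and pass to the limit using the monotonicity already recorded in~(\ref{eq:Rr})--(\ref{eq:R1}); concavity and the slope bound are stable under this limiting process, and continuity at the endpoint $c$ (a rational point) follows from Theorem~\ref{thm:Cauchypadique}.

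The main obstacle I anticipate is purely bookkeeping rather than conceptual: making the normalisation dictionary with \cite{pde} precise — in particular being careful about whether ``radius'' is measured multiplicatively or logarithmically, whether it is the embedded or the intrinsic one, and which derivation ($\mathrm{d}/\mathrm{d}t$ versus $t\,\mathrm{d}/\mathrm{d}t$) is used — so that the concavity and the exact shape $m/i$ of the slopes survive the translation. Everything else is either quoted verbatim from the cited works or is an immediate consequence of the elementary behaviour of $\rho_S$ in Lemma~\ref{lem:rho} and the monotonicity in formulas~(\ref{eq:Rr})--(\ref{eq:R1}).
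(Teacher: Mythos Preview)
Your proposal is correct and matches the paper's own treatment: Theorem~\ref{thm:Kedlaya} is not proved in the paper but quoted from the literature, with the only work being the translation between Kedlaya's generic radius of convergence and the embedded radius~$\Rc^{\mathrm{emb}}$ used here. The paper handles that translation by pointing to \cite[proposition~9.7.5]{pde} or \cite[section~3.3]{finiteness}, which is exactly the ``normalisation dictionary'' you identify as the main (and only) substantive step; your exhaustion argument for the disc case and the discussion of~$\rho_S$ are more explicit than anything the paper writes out, but neither adds nor omits any real content.
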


In most of the literature (see~\cite[theorem~11.3.2]{pde}, for instance), the result is actually stated with the generic radius of convergence. Since the relation between the two radii is well understood (see~\cite[proposition~9.7.5]{pde} or~\cite[section~3.3]{finiteness}), this actually causes no harm.

\subsection{Potential theory}\label{sec:potential}

In this section, we assume that the absolute value of~$K$ is non-trivial and that~$X$ is strictly $K$-analytic and boundary-free. We briefly introduce non-archi\-me\-dean potential theory, as developed in Amaury Thuillier's manuscript~\cite{TheseThuillier}. This will be our main tool in the proof of theorem~\ref{thm:continuousandfinite}.

\medskip

Let us recall that any $K$-analytic space~$Y$ in the sense of V.~Berkovich has a boundary~$\partial Y$ and an interior $\textrm{Int}(Y) = Y \setminus \partial Y$ (see \cite{rouge}, section~2.5 for the affinoid case and the discussion before proposition~3.1.3 for the general one). For instance, the closed disc~$D^+(c,R)$, with $R>0$ has boundary~$\{\eta_{c,R}\}$ and the closed annulus~$C^+(c;R_{1},R_{2})$, with $0<R_{1}\le R_{2}$ has boundary $\{\eta_{c,R_{1}},\eta_{c,R_{2}}\}$. Any open disc or annulus and, more generally, any open subset of the affine line, any open subset of the analytification of an algebraic variety is boundary-free. In this section, $X$~is assumed to be boundary-free and all of its open subsets will also be. 

\medskip

Let us now turn to potential theory. Topologically speaking, Berkovich analytic curves can be reconstructed from their finite subgraphs and we will first explain what it looks like on the latter. On a finite metrized graph~$\Gamma$, one may define
\begin{itemize}
\item smooth functions: they are the continuous piecewise linear functions;
\item the Laplacian of a smooth function~$f$: it is the finite measure
\begin{equation}\label{eq:ddc}
\textrm{dd}^c(f) = \sum_{p\in \Gamma} \big(\sum_{\vec v \in T_{p}\,\Gamma} m_{\vec v}\, \mathrm{d}_{\vec v}f(p)\big) \delta_{p},
\end{equation}
where~$T_{p}\,\Gamma$ denotes the set of directions out of~$p$, $m_{\vec v}$ is a weight, $\mathrm{d}_{\vec v}f(p)$~denotes the outer derivative of~$f$ at~$p$ and $\delta_{p}$~denotes the Dirac measure at~$p$.
\end{itemize}
One may push further this line of thought and define harmonic functions (those for which \mbox{$\textrm{dd}^c(f)=0$}), super-harmonic functions (those for which $\textrm{dd}^c(f)\le 0$) and sub-harmonic functions.

To give a rough idea of what is going on, let us give a few examples. Saying that a smooth function~$f$ on a segment~$[a,b]$ is harmonic on~$(a,b)$ is equivalent to saying that its slope never changes. We deduce that such a function~$f$ is linear on~$[a,b]$, hence determined by its values~$f(a)$ and~$f(b)$ at the boundary. Conversely, any prescribed values at the boundary may be realized by a smooth function on~$[a,b]$ which is harmonic on~$(a,b)$. The analogues of both statements hold for arbitrary finite graphs (see~\cite[proposition~1.2.15]{TheseThuillier}). To put it in other words, in this context, the Dirichlet problem admits a unique solution.

The same kind of arguments show that smooth functions on segments that are super-harmonic in the interior correspond to concave functions.

\medskip

Building on those ideas in the case of graphs, A.~Thuillier managed to develop a full-fledged potential theory on Berkovich analytic curves which is quite similar to the complex one. We briefly review here the definitions he introduces (see~\cite[sections~2 and~3]{TheseThuillier}).

The first step is to extend the notion of smooth and harmonic functions. Let~$\Yk$ be a semi-stable formal scheme whose generic fiber~$Y$ identifies to an affinoid domain of~$X$. In this situation, one defines a skeleton~$\Gamma(\Yk)$\footnote{This skeleton is actually denote~$S(\Yk)$ in~\cite{TheseThuillier}. We changed the notation to avoid the confusion with a triangulation.}, which is a finite subgraph of~$Y$ that contains~$\partial Y$, and a retraction $\tau_{\Yk} : Y \to \Gamma(\Yk)$. Formula~(\ref{eq:ddc}) may now be made more precise by choosing for the weight~$m_{\vec v}$ the residual degree of the field on which the direction corresponding to~$\vec v$ is defined. In particular, if~$K$ is algebraically closed, it is always~1. Let~$H(Y)$ be the pull-back by~$\tau_{\Yk}$ of the set of smooth functions on~$\Gamma(\Yk)$ that are harmonic outside~$\partial Y$.

If~$Y$ is an arbitrary strictly $K$-affinoid domain of~$X$, by the semi-stable reduction theorem, one may carry out the previous construction after passing to a finite Galois extension~$K'/K$ and then consider the invariants under $\mathrm{Gal}(K'/K)$. The resulting set $H(Y) \subset \Cs^0(Y,\R)$, the set of harmonic functions on~$Y$, depends only on~$Y$. 

For any open subset~$U$ of~$X$, one may now set $\Hc_{X}(U) = \varprojlim H(Y)$, where the limit is taken over the strictly $K$-affinoid domains~$Y$ of~$U$. This defines the sheaf~$\Hc_{X}$ of harmonic functions on~$X$. 

If~$Y$ is a $K$-affinoid domain of~$X$, we extend the previous definition of~$H$ by setting $H(Y) = \Gamma(Y\setminus \partial Y,\Hc_{X})\, \cap\, \Cs^0(Y,\R)$. In this general case, the Dirichlet problem also admits a unique solution: the restriction map $H(Y) \to \Hom(\partial Y,\R)$ is bijective. 

Let~$U$ be an open subset of~$X$. The $\R$-vector space $A^0(U)$ of smooth functions on~$U$ consists of the continuous functions $f \in \Cs^0(U,\R)$ for which there exists a locally finite covering of~$U$ by $K$-affinoid domains~$Y$ such that $f_{|Y} \in H(Y)$. If we denote by~$A^1(U)$ the $\R$-vector space of real measures on~$U$ whose support is a locally finite subset of $U_{[2,3]}$ (which is denoted by~$I(U)$ in~\cite{TheseThuillier}), we may naturally extend the Laplacian operator defined by formula~(\ref{eq:ddc}) to a map
\[\textrm{dd}^c : A^0(U) \to A^1(U).\]
As we expect, its kernel is nothing but $\Hc_{X}(U)$.

This operator sends $A_{c}^0(U)$ to $A_{c}^1(U)$, where the subscript~$c$ indicates a compactness condition on the support, and induces a map between their duals
\[\textrm{dd}^c : D^0(U) \to D^1(U).\]
It may be useful to remark that the set~$D^0(U)$ of currents of degree~0 is naturally isomorphic to $\Hom(U_{[2,3]},\R)$.

\medskip

In a more restricting setting, we would also like to mention the book~\cite{BR} by M.~Baker and R.~Rumely where a potential theory on the line over an algebraically closed field is developed. Of course, it is equivalent to A.~Thuillier's.

\medskip

In this text, we will be especially interested in super-harmonic functions (see~\cite[sections~3.1.2 and~3.4]{TheseThuillier} and~\cite[chapter~8]{BR}). We will recall the results we need. Let us begin with the definition (see~\cite[d\'efinition~3.1.5]{TheseThuillier}).

\begin{definition}
Let~$U$ be an open subset of~$X$. We say that a map $u : U \to \R\cup\{+\infty\}$ is pre-super-harmonic if, for any strictly $k$-affinoid~$Y$ of~$U$ and any harmonic function~$h$ on~$Y$, the following condition holds:
\[(u_{|\partial Y} \ge h_{|\partial Y}) \implies (u_{|Y} \ge h).\]
The map~$u$ is said to be super-harmonic if, moreover, it is lower semicontinuous and identically equal to~$+\infty$ on no connected component of~$U$.
\end{definition}

Super-harmonic functions may be characterised by a non-positivity property of their Laplacians (see~\cite[proposition~3.4.4 and th\'eor\`eme~3.4.12]{TheseThuillier} or~\cite[theorem~8.19]{BR}). 

\begin{theorem}\label{thm:Radon}
Let~$U$ be an open subset of~$X$.
\begin{enumerate}[\it i)]
\item Let~$f\in A^0(U)$. The smooth function~$f$ is super-harmonic if, and only if, $\mathrm{d}\mathrm{d}^c(f) \le 0$.
\item Let $T\in D^0(U)$. The current~$T$ is a super-harmonic function if, and only if, \mbox{$\mathrm{d}\mathrm{d}^c(T) \le 0$} (as a current of degree~1). In this case, $\mathrm{d}\mathrm{d}^c(T)$ is a non-positive Radon measure.
\end{enumerate}
\end{theorem}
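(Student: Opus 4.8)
The plan is to reduce everything to the combinatorics of the Laplacian on finite metrized graphs, where the statement is essentially the classical minimum principle, and then to transfer it to arbitrary currents by a local regularization argument; concretely I would follow~\cite[sections~2 and~3]{TheseThuillier} (and, in the line case, \cite[chapter~8]{BR}).

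For part~i), both super-harmonicity and the condition $\mathrm{d}\mathrm{d}^c(f)\le 0$ are local, and $\mathrm{d}\mathrm{d}^c(f)$ is carried by~$U_{[2,3]}$, so it is enough to argue near a point~$p$ of type~2 or~3. After shrinking, $f$ is pulled back by a retraction~$\tau_{\Yk}$ from a smooth function on a finite graph which near~$p$ is a star~$\Gamma$ with centre~$p$ and legs ending at the points $q_{1},\dots,q_{k}$ of~$\partial Y$, and we may take those legs short enough that $f$ is linear on each of them. For the implication ``super-harmonic $\Rightarrow \mathrm{d}\mathrm{d}^c(f)\le 0$'', let $h\in H(Y)$ be the unique harmonic function with $h_{|\partial Y}=f_{|\partial Y}$, using solvability of the Dirichlet problem; it is linear on each leg, and the single linear equation expressing harmonicity of~$h$ at~$p$ gives the explicit value $h(p)-f(p)=\big(\sum_{\vec v\in T_{p}\Gamma}m_{\vec v}\,\mathrm{d}_{\vec v}f(p)\big)\big/\big(\sum_{\vec v}m_{\vec v}/\ell_{\vec v}\big)$, where $\ell_{\vec v}$ is the length of the leg in the direction~$\vec v$. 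Super-harmonicity forces $f(p)\ge h(p)$, hence the numerator, which is the coefficient of~$\delta_{p}$ in $\mathrm{d}\mathrm{d}^c(f)$, is $\le 0$. Conversely, assume $\mathrm{d}\mathrm{d}^c(f)\le 0$ and let~$h$ be harmonic on a connected affinoid~$Y$ with $f\ge h$ on~$\partial Y$. Put $g=f-h$, so $\mathrm{d}\mathrm{d}^c(g)\le 0$ and $g\ge 0$ on~$\partial Y$; the set where~$g$ attains its minimum is closed and is open at points of~$\mathrm{Int}(Y)$ — at an interior minimum every outer derivative is $\ge 0$, so $\mathrm{d}\mathrm{d}^c(g)\le 0$ forces them all to vanish, while $g$ is constant on the connected fibres of~$\tau_{\Yk}$ — hence this set meets~$\partial Y$, and therefore $\min_{Y}g=\min_{\partial Y}g\ge 0$, i.e. $f\ge h$ on~$Y$. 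Lower semicontinuity and the non-triviality clause are automatic for the continuous real-valued function~$f$.

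For part~ii), identify $D^{0}(U)$ with $\Hom(U_{[2,3]},\R)$. If $\mathrm{d}\mathrm{d}^c(T)\le 0$ as a degree-one current then, by the standard fact that a non-positive current is represented by a Radon measure, $\mu:=-\mathrm{d}\mathrm{d}^c(T)$ is a non-negative Radon measure, so $\mathrm{d}\mathrm{d}^c(T)=-\mu$ is a non-positive Radon measure, which is the last assertion. To see that~$T$ is an honest super-harmonic function, work on a relatively compact open piece and subtract off a potential: with $G_{\mu}$ the potential of the finite measure~$\mu$ — a super-harmonic function satisfying $\mathrm{d}\mathrm{d}^c(G_{\mu})=-\mu$ — the current $T-G_{\mu}$ has vanishing Laplacian, hence lies in $\ker(\mathrm{d}\mathrm{d}^c)=\Hc_{X}$; it is thus a genuine, in particular continuous, harmonic function, and $T=(T-G_{\mu})+G_{\mu}$ is super-harmonic. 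Conversely, if $T$ comes from a super-harmonic function~$u$, one approximates~$u$ locally by smooth super-harmonic functions $f_{n}\to u$; by part~i) each satisfies $\mathrm{d}\mathrm{d}^c(f_{n})\le 0$, and passing to the limit in the sense of currents gives $\mathrm{d}\mathrm{d}^c(u)\le 0$.

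The genuinely analytic work — as opposed to the graph-theoretic minimum principle that drives part~i) — is concentrated in the two inputs used in part~ii): the construction and elementary properties of the potentials~$G_{\mu}$ of positive Radon measures, and the local approximation of super-harmonic functions by smooth ones. These would be the main obstacle to a fully self-contained proof; here I would simply invoke them from~\cite{TheseThuillier}.
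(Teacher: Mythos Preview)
The paper does not prove this theorem at all: it is quoted as background from A.~Thuillier's thesis and M.~Baker--R.~Rumely's book, with the explicit attribution ``see~\cite[proposition~3.4.4 and th\'eor\`eme~3.4.12]{TheseThuillier} or~\cite[theorem~8.19]{BR}'' immediately before the statement. There is therefore no ``paper's own proof'' to compare against; the authors treat this result as an input from the literature, exactly as you yourself conclude at the end of your proposal.

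That said, your sketch is a reasonable outline of the argument one finds in those references. Part~\textit{i)} is indeed the graph-theoretic minimum principle dressed up in Thuillier's language, and your computation of $h(p)-f(p)$ on a star is the right mechanism. For part~\textit{ii)}, the decomposition $T = (T-G_{\mu}) + G_{\mu}$ into a harmonic piece and a potential is the standard route, and your honesty about where the real work lies --- existence and properties of potentials, and smooth approximation of super-harmonic functions --- is appropriate. One small caution: in the direction ``$T$ super-harmonic $\Rightarrow \mathrm{dd}^c(T)\le 0$'', the approximation of~$u$ by smooth super-harmonic~$f_{n}$ is not entirely innocent in this setting (one needs the $f_{n}$ to converge to~$u$ in a topology strong enough to pass $\mathrm{dd}^c$ to the limit, typically pointwise on~$U_{[2,3]}$ together with some local uniform bound), and in Thuillier's treatment this direction is handled somewhat differently, going through the characterisation of sub/super-harmonic functions via averages on skeleta rather than by naive smoothing. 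If you wanted to make the sketch self-contained, that is the step to flesh out.
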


Let us introduce the basic examples of harmonic and super-harmonic functions (see \cite[propositions~2.3.20 and~3.1.6]{TheseThuillier}).

\begin{proposition}
Let~$U$ be an open subset of~$X$. Let $f\in \Os(U)$.
\begin{enumerate}[\it i)]
\item If~$f$ is invertible on~$U$, then $-\log(|f|)$ is harmonic on~$U$.
\item If~$f$ vanishes identically on no connected component of~$U$, then $-\log(|f|)$ is super-harmonic on~$U$.
\end{enumerate}
\end{proposition}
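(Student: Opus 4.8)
The plan is to reduce both assertions to computations on finite metrized graphs by means of semi-stable models, together with the local structure of (non-)vanishing analytic functions on discs, annuli and segments of the affine line. Since harmonicity is a local property and, by construction, $\Hc_{X}(U) = \varprojlim_{Y} H(Y)$ over the strictly $K$-affinoid domains $Y \subseteq U$, statement \textit{i)} amounts to showing that every point of $U$ admits a strictly $K$-affinoid neighbourhood $Y$ which, after a finite Galois extension $K'/K$, carries a semi-stable formal model $\Yk$ (the passage to $K'$ being harmless, as $-\log(|f|)$ is Galois-equivariant and one takes invariants) and such that $-\log(|f|)_{|Y}$ is the pull-back by $\tau_{\Yk}$ of a smooth function on $\Gamma(\Yk)$ that is harmonic outside $\partial Y$; this is exactly the condition $-\log(|f|)_{|Y} \in H(Y)$.

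Two ingredients enter. \textbf{First}, $|f|$ factors through the retraction $\tau_{\Yk}$: each connected component of $Y \setminus \Gamma(\Yk)$ is a virtual open disc, and on it $f$, being invertible, has constant absolute value. After extending scalars to an algebraically closed field this is the elementary fact that a unit $f = \sum_{n \ge 0} a_{n} t^{n}$ of an open disc $D^{-}_{L}(0,R)$ satisfies $|f(x)| = |a_{0}|$ for all $x$, since $|a_{0}| > |a_{n}| r^{n}$ for every $n \ge 1$ and $r < R$. \textbf{Second}, the restriction of $-\log(|f|)$ to $\Gamma(\Yk)$ is smooth and harmonic outside $\partial Y$. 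Piecewise linearity is checked on each edge: an edge is the skeleton of a virtual annulus, hence, after base change and localisation, sits inside $\E{1}{K}$, where $r \mapsto \log|f(\eta_{c,r})|$ is piecewise $\log$-linear by the theory of Newton polygons (and even linear, $f$ having no zero here). Harmonicity at a point $p \in \mathrm{Int}(Y)$ of type~$2$ is the identity $\sum_{\vec v \in T_{p}\Gamma} m_{\vec v}\, \mathrm{d}_{\vec v}(\log|f|)(p) = 0$ occurring in~(\ref{eq:ddc}): after rescaling so that $|f(p)| = 1$, the reduction $\bar f$ is a non-zero rational function on the projective residue curve $\Cs_{p}$, the directions $\vec v$ out of $p$ correspond to its closed points $x_{\vec v}$, one has $\mathrm{d}_{\vec v}(\log|f|)(p) = \pm\,\mathrm{ord}_{x_{\vec v}}(\bar f)$ with $m_{\vec v}$ the residual degree of $x_{\vec v}$, so the sum equals $\mp\deg(\mathrm{div}\,\bar f) = 0$. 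This proves \textit{i)}.

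For \textit{ii)}, observe that $-\log(|f|)$ is lower semicontinuous on $U$ (as $|f|$ is continuous with values in $\R_{\ge 0}$) and finite on an open dense subset of each connected component (the zero locus of $f$ being a discrete closed subset of $U$, since $f$ vanishes identically on no component), hence is identically $+\infty$ on no component. On the open set $U' = \{x \in U : |f(x)| \neq 0\}$ the section $f$ is invertible, so $-\log(|f|)$ is harmonic there by \textit{i)}, a fortiori super-harmonic. It remains to verify the super-harmonic domination property across the zero set $U \setminus U'$, which consists of type-$1$ points (the semi-norm attached to a point of type $2$, $3$ or $4$ does not kill a non-zero section). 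This is local around such a point: after a finite base change one has a disc on which $f = u \cdot \prod_{i} (t - a_{i})$ with $u$ invertible, whence $-\log(|f|) = -\log|u| + \sum_{i}\bigl(-\log|t - a_{i}|\bigr)$ is a finite sum of a harmonic function and translates of $-\log|t|$; and $-\log|t - a|$, viewed as a current in $D^{0}$, has Laplacian $-\delta_{a} \le 0$, so it is super-harmonic by Theorem~\ref{thm:Radon}~\textit{ii)}, and a sum of super-harmonic functions is super-harmonic. (Alternatively, present $-\log(|f|)$ as the increasing limit of the super-harmonic functions $\min(-\log(|f|), c)$ as $c \to +\infty$ and invoke the stability of super-harmonicity under lower semicontinuous increasing limits that are finite somewhere on each component.)

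The step I expect to be the main obstacle is the harmonicity at points of type~$2$: it hinges on identifying the outer slopes of $\log|f|$ at $p$ with the orders of vanishing of the reduction $\bar f$ along the directions out of $p$, and then on the vanishing of $\deg(\mathrm{div}\,\bar f)$ on the projective curve $\Cs_{p}$. This is precisely where the choice of the weights $m_{\vec v}$ as residual degrees in~(\ref{eq:ddc}) is used in an essential way; everything else reduces to one-dimensional computations on discs, annuli and segments of the line.
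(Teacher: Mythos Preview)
The paper does not supply its own proof of this proposition: it is quoted directly from Thuillier's thesis (propositions~2.3.20 and~3.1.6 of~\cite{TheseThuillier}) as one of the foundational inputs of the potential-theoretic machinery, and no argument is given in the present article. So there is no ``paper's own proof'' to compare against.

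That said, your argument is correct and is, in outline, the standard one found in the cited reference. The two essential points---that $|f|$ is constant on the open discs retracted onto the skeleton (hence $-\log|f|$ factors through $\tau_{\Yk}$), and that harmonicity at an interior type-$2$ point amounts to the vanishing of the degree of $\mathrm{div}(\bar f)$ on the proper residue curve $\Cs_{p}$---are exactly the mechanism behind Thuillier's proposition~2.3.20. Your identification of the weights $m_{\vec v}$ with residual degrees is precisely what the paper highlights when describing formula~(\ref{eq:ddc}). For part~\textit{ii)}, your reduction to the local model $-\log|t-a|$ with Laplacian $-\delta_{a}$ is again the standard route (Thuillier's proposition~3.1.6 proceeds similarly, via approximation by $\max(-\log|f|,-c)$, which you also mention as an alternative).

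One small caution: when you invoke the factorisation $f = u\cdot\prod_{i}(t-a_{i})$ on a disc, this is Weierstrass preparation and needs the disc to be closed (or the zero set to be finite in the chosen neighbourhood), which is fine here since the zero locus is discrete; just make sure the neighbourhood is taken small enough. Also, the projectivity of $\Cs_{p}$ used for $\deg(\mathrm{div}\,\bar f)=0$ relies on $p$ lying in the interior of $X$, which is guaranteed by the standing hypothesis in this section that $X$ is boundary-free.
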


Since we will use it later, let us mention that we already encountered an example of harmonic function in section~\ref{sec:affineline}.

\begin{lemma}\label{lem:rhoh}
Let~$C$ be an open disc or annulus endowed with the empty weak triangulation. The map $\log(\rho_{\emptyset})$ is harmonic on~$C$. 
\end{lemma}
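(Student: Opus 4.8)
The plan is to reduce to the two concrete situations in definition~\ref{def:multiradiusembT}, viewing $C$ as an analytic domain of~$\E{1}{K}$ with the coordinate~$t$ used to define~$\rho_{\emptyset}$, and to identify $\log(\rho_{\emptyset})$ with an explicit harmonic function in each case.

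If $C = D_{K}^{-}(c,R)$ is an open disc, I would first note that, since a disc is not a virtual annulus, the skeleton attached to the empty weak triangulation is empty. Hence, for any $x\in C$ and any $L$-rational lift~$\ti{x}$, the disc $D(\ti{x},S_{L})$ of definition~\ref{def:DxS} is the whole of $C_{L} = D_{L}^{-}(c,R) = D_{L}^{-}(\ti{x},R)$, of radius~$R$ in the coordinate~$t$. Therefore $\rho_{\emptyset}$ is the constant~$R$ on~$C$, and $\log(\rho_{\emptyset})$, being constant, is harmonic (its Laplacian vanishes).

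If $C = C_{K}^{-}(c;R_{1},R_{2})$ is an open annulus, its skeleton for the empty weak triangulation is the open interval $\{\eta_{c,s} \mid R_{1} < s < R_{2}\}$. The key point is the computation of $D(\ti{x},S_{L})$: for $x\in C$ with $r := |(t-c)(x)|$ and $\ti{x}$ a lift of~$x$, I would check, using the ultrametric inequality, that the residue disc $D_{L}^{-}(\ti{x},r)$ is contained in~$C_{L}$, misses the skeleton, and is closed in the complement of the skeleton (its frontier in~$\E{1}{L}$ being the single skeleton point~$\eta_{c,r}$); being also connected, it is therefore the connected component of $C_{L}$ minus its skeleton that contains~$\ti{x}$, i.e. $D(\ti{x},S_{L}) = D_{L}^{-}(\ti{x},r)$. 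Reading off the radius in the coordinate~$t$ yields $\rho_{\emptyset}(x) = |(t-c)(x)|$ for every $x\in C$. Since $t-c$ has no zero on the open annulus, it is invertible in~$\Os(C)$, so by the basic example of harmonic functions recalled above ($-\log|f|$ is harmonic when~$f$ is invertible), $-\log(\rho_{\emptyset}) = -\log|t-c|$ is harmonic on~$C$; as harmonic functions form an $\R$-vector space, so is $\log(\rho_{\emptyset})$.

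The only mildly delicate step is this identification of the connected component of the complement of the skeleton with a residue disc of the expected radius; everything else is either immediate (disc case) or a direct appeal to the proposition on $-\log|f|$. One could alternatively quote lemma~\ref{lem:rho} to get for free that $\log(\rho_{\emptyset})$ is already smooth --- locally constant off the skeleton and piecewise $\log$-linear on it with slopes~$0$ or~$\pm 1$ --- and then be left only with checking that the slope is constantly~$\pm 1$ along the annulus' skeleton, which again boils down to the same computation $\rho_{\emptyset} = |t-c|$ and makes $\mathrm{dd}^c(\log(\rho_{\emptyset}))$ vanish at every interior point of the interval.
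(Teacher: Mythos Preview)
Your proposal is correct and follows essentially the same route as the paper: split into the disc and annulus cases, observe that $\rho_{\emptyset}\equiv R$ on the disc and $\rho_{\emptyset}(x)=|(t-c)(x)|$ on the annulus, and invoke the proposition that $-\log|f|$ is harmonic for invertible~$f$. You simply add more justification for the identification of $D(\ti{x},S_{L})$ (which the paper leaves implicit) and sketch an alternative via lemma~\ref{lem:rho}; neither changes the substance of the argument.
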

\begin{proof}
Assume that~$C$ is the open disc $D^-(c,R)$. Then $\rho_{\emptyset} \equiv R$ and the result is obvious.

Assume that~$C$ is the open annulus $C^-(c;R_{1},R_{2})$ with coordinate~$t$. Then, for every~\mbox{$x\in C$}, we have $\rho_{\emptyset}(x) = |(t-c)(x)|$ and the result follows from the proposition.
\end{proof}

In general, it is easy to check that a map of the form~$\rho_{S}$ on an arbitrary analytic domain of the affine line is super-harmonic, but not necessarily harmonic.

\medskip

We now state some properties of super-harmonic functions. It is well-known that a concave map on a segment is left and right-differentiable in the interior of this segment. The next proposition is the analogue of this fact for more general finite 1-dimensional graphs. It appears in~\cite[proposition~8.24]{BR} in the case of the line and may be generalised to the case of curves. 

\begin{proposition}\label{prop:shcontinuous}
Let~$U$ be a connected open subset of~$X$ and $u : U \to \R\cup\{+\infty\}$ be a pre-super-harmonic function on~$U$ that is not identically equal to~$+\infty$. Let~$\Gamma$ be a subgraph of~$U$. For any point~$p\in \Gamma$ of type~2, 3 or~4 and any direction~$\vec{v} \in T_{p}\, \Gamma$, the directional derivative~$\mathrm{d}_{\vec v}f(p)$ exists and is finite. In particular, the restriction of~$f$ to~$\Gamma$ is continuous at any point of type~2, 3 or~4.
\end{proposition}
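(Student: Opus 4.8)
The plan is to reduce this to the elementary fact that a concave function of one real variable admits one-sided derivatives. The two inputs needed are that $u$ is finite at~$p$ and that the restriction of $u$ to a branch at~$p$ is concave for the canonical gauge; granting these, the one-sided derivative $\mathrm{d}_{\vec v}u(p)$ exists because the relevant difference quotients are monotone, and it is finite by a squeezing argument, except at points of type~$4$ where more is needed.

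Concretely, fix $p\in\Gamma$ of type $2$, $3$ or $4$ and $\vec v\in T_p\Gamma$, and choose a segment $I=[p,q]\subseteq\Gamma\cap U$ in the direction~$\vec v$, small enough that $(p,q]\subseteq X_{[2,3]}$ and that $I$ lies in an analytic domain of $\E{1}{K}$ contained in~$U$; parametrise $I$ by the gauge~$\ell$, with $p\mapsto 0$. For a subsegment $[a,b]\subseteq(p,q]$ one realises $[a,b]$ as the skeleton of a strictly affinoid domain $Y\subseteq U$ with $\partial Y=\{a,b\}$ (a closed annulus, or a finite union of closed annuli glued along their ends); by uniqueness of the solution of the Dirichlet problem on~$Y$, the harmonic functions on~$Y$ restrict on $[a,b]$ exactly to the affine functions of the parameter, so the defining inequality of pre-super-harmonicity, applied to the chord of $u|_{[a,b]}$, gives $u\ge\text{chord}$ there. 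Letting $[a,b]$ vary, $u|_I$ is concave.

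For finiteness, note that a concave function that equals $+\infty$ at an interior point of an interval is identically $+\infty$ on that interior; propagating this along segments and using connectedness of~$U$, if $u$ took the value $+\infty$ at one point of $U_{[2,3]}$ it would do so on all of $U_{[2,3]}$, and then, comparing on small affinoid neighbourhoods with constant harmonic functions, $u$ would be identically $+\infty$ on~$U$, which is excluded. Hence $u$ is finite on $U_{[2,3]}$. If $p$ has type $2$ or $3$ it is not a branch tip, so I would prolong $I$ past~$p$ along a second direction to obtain a geodesic segment having $p$ in its interior and on which $u$ is concave and finite on both sides of~$p$; then $\mathrm{d}_{\vec v}u(p)$ is wedged between two finite chord slopes, hence exists and is finite. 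The ``in particular'' clause follows, since near~$p$ the graph~$\Gamma$ is a finite union of such branches along each of which $u$ tends to the finite value $u(p)$.

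The hard case, and the main obstacle, is $p$ of type~$4$: then $p$ has a single direction, cannot be placed in the interior of a geodesic, and the squeezing argument fails. One first shows $u(p)$ is finite and equal to the one-sided limit $\lim_{x\to p,\,x\in I}u(x)$: lower semicontinuity gives ``$\le$'' (the limit being finite since a concave function on $(0,\ell(I)]$ cannot blow up at the open end), and comparison of $u$ with constant harmonic functions on shrinking closed discs around~$p$ gives ``$\ge$''. It then remains to exclude an infinite slope of the concave function $u|_I$ at~$0$, for which I would invoke that the Laplacian $\mathrm{dd}^c u$ is a Radon measure (Theorem~\ref{thm:Radon}), hence finite on the compact segment $I\subseteq U$, whereas an infinite slope at~$p$ would force infinite mass of $\mathrm{dd}^c u$ to accumulate at~$p$. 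Thus the type-$4$ case, where one only sees the curve on one side of the point and must supplement concavity with lower semicontinuity and the fine structure of $\mathrm{dd}^c u$, is where the difficulty concentrates; everything else is soft.
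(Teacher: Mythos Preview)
The paper does not actually prove this proposition; it cites \cite[proposition~8.24]{BR} for the line and asserts without detail that the argument generalises to curves. Your approach for types~2 and~3---concavity of $u$ on segments via the Dirichlet problem on closed annuli, then prolonging the segment past~$p$ so that $p$ lies in the interior of a geodesic---is the natural one and is correct.

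The gap is at type~4, precisely where you locate it, but your patch does not work under the stated hypothesis. You invoke lower semicontinuity to obtain $u(p)\le\lim_{x\to p,\,x\in I}u(x)$, yet $u$ is only assumed \emph{pre}-super-harmonic, and lower semicontinuity is exactly what distinguishes super-harmonic from pre-super-harmonic in the definition recalled just before the proposition. A type~4 point never lies in the Shilov boundary of a strictly affinoid domain, so the pre-super-harmonic inequality only ever bounds $u(p)$ from \emph{below} (your ``$\ge$'' via constant harmonic functions on shrinking discs is correct); nothing prevents $u(p)=+\infty$ or $u(p)>\lim_{x\to p}u(x)$, in which case the directional derivative is $-\infty$. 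Your appeal to Theorem~\ref{thm:Radon} has the same defect: that theorem concerns super-harmonic functions, and to use it you would first have to show that $u|_{U_{[2,3]}}$, viewed as a current, has $\mathrm{dd}^c\le 0$, which you have not done. In short, either an additional argument specific to type~4 is missing, or the proposition as written is slightly stronger than what the cited reference actually gives and should read ``super-harmonic''; this is harmless for the paper itself, since every use of the proposition (in the proof of Proposition~\ref{prop:finiteopendisc} and in Corollaries~\ref{cor:finiteopendiscx} and~\ref{cor:continuousinterval}) is at a point of type~2.
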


Let us quote the other properties of super-harmonic functions that we will need. They come from~\cite[proposition~3.1.8]{TheseThuillier}, except for the last property, which may be deduced from the preceding proposition (see also~\cite[proposition~8.26]{BR}).

Recall that, for any topological space~$U$, the lower semicontinuous regularization~$f^*$ of a map $f : U \to (-\infty,+\infty]$ which is locally bounded below is defined by
\[\forall x\in U, f^*(x) = \liminf_{y\to x} f(y).\]

\begin{proposition}\label{prop:propsh}
\begin{enumerate}[\it i)]
\item Let~$U$ be an open subset of~$X$. If $f$ and $g$ are super-harmonic functions on~$U$, then $\min(f,g)$ is super-harmonic on~$U$ and, for any~$\lambda,\mu \ge 0$, $\lambda f +\mu g$ is super-harmonic of~$U$.
\item Let~$U$ be a connected open subset of~$X$. Let $(f_{n})_{n\ge 0}$ be a sequence of super-harmonic functions on~$U$ which is locally bounded below. Put $f = \liminf_{n} (f_{n})$. Then either $f$ is identically equal to~$+\infty$ on~$U$ or the lower semicontinuous regularisation~$f^*$ of~$f$ is super-harmonic. Furthermore, for every $x\in U\setminus U(K)$, we have $f^*(x)=f(x)$.  
\end{enumerate}
\end{proposition}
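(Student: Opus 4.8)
The plan is to treat the three assertions separately: part~i) is a direct check against the definition of (pre-)super-harmonic function; part~ii) is the non-archimedean analogue of Cartan's theorem on limits of super-harmonic functions; and its last clause is the only point where the tree structure of the curve genuinely intervenes.

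\textbf{Part~i).} Each of $\min(f,g)$, $\lambda f$, $f+g$ is lower semicontinuous, and none of them is identically $+\infty$ on a connected component $V$ of~$U$: for $\min(f,g)$ because $\{\min(f,g)=+\infty\}=\{f=+\infty\}\cap\{g=+\infty\}$ is a proper subset of~$V$, and for $\lambda f$ (with $\lambda>0$) and $f+g$ because $V$ contains a point of type~$2$ and a super-harmonic function is finite at every point of type~$2$, $3$ or~$4$ --- indeed, by proposition~\ref{prop:shcontinuous} applied to a small segment through such a point its directional derivatives exist and are finite, which rules out the value $+\infty$. For pre-super-harmonicity, fix a strictly $K$-affinoid domain $Y\subset U$ and recall that $\partial Y$ is a finite set of points of type~$2$ or~$3$, that $H(Y)$ is an $\R$-vector space, and that $H(Y)\to\Hom(\partial Y,\R)$ is bijective (unique solvability of the Dirichlet problem). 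If $h\in H(Y)$ and $\min(f,g)_{|\partial Y}\ge h_{|\partial Y}$, then $f_{|\partial Y}\ge h_{|\partial Y}$ and $g_{|\partial Y}\ge h_{|\partial Y}$, hence $f_{|Y}\ge h$ and $g_{|Y}\ge h$, hence $\min(f,g)_{|Y}\ge h$. If $\lambda f_{|\partial Y}\ge h_{|\partial Y}$ with $\lambda>0$, then $f_{|\partial Y}\ge(\lambda^{-1}h)_{|\partial Y}$ with $\lambda^{-1}h\in H(Y)$, so $f_{|Y}\ge\lambda^{-1}h$ and $\lambda f_{|Y}\ge h$ (the case $\lambda=0$ being trivial). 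If $(f+g)_{|\partial Y}\ge h_{|\partial Y}$, let $h_{1}\in H(Y)$ have boundary values $f_{|\partial Y}$ and $h_{2}\in H(Y)$ boundary values $(h-f)_{|\partial Y}$ (they exist since $\partial Y$ and $f_{|\partial Y}$ are finite); then $h_{1}+h_{2}=h$ by uniqueness, $f_{|Y}\ge h_{1}$, and $g_{|\partial Y}\ge h_{2|\partial Y}$ gives $g_{|Y}\ge h_{2}$, whence $(f+g)_{|Y}\ge h$. The general $\lambda f+\mu g$ follows by combining scaling and additivity. (One could instead deduce the linear part from part~ii) of theorem~\ref{thm:Radon}: $\mathrm{dd}^{c}(\lambda f+\mu g)=\lambda\,\mathrm{dd}^{c}f+\mu\,\mathrm{dd}^{c}g\le 0$ for $\lambda,\mu\ge 0$.)

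\textbf{Part~ii), super-harmonicity of $f^{*}$.} Write $f=\sup_{m}g_{m}$ with $g_{m}=\inf_{n\ge m}f_{n}$. Each $g_{m}$ is the pointwise decreasing limit of the functions $\min(f_{m},\dots,f_{N})$, $N\ge m$, which are super-harmonic by part~i); since $(f_{n})$ is locally bounded below the limit does not descend to $-\infty$, so by the theorem on decreasing limits of super-harmonic functions (\cite[proposition~3.1.8]{TheseThuillier}, \cite[chapter~8]{BR}) each $g_{m}$ is super-harmonic unless identically $+\infty$. As $(g_{m})$ is non-decreasing, $f$ is then an increasing limit of super-harmonic functions, and the increasing-limit theorem --- the non-archimedean form of Cartan's theorem --- yields the dichotomy: either $f\equiv+\infty$ on the connected set~$U$, or $f^{*}$ is super-harmonic. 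Alternatively one runs the classical argument directly: via the Dirichlet problem each interior point $x$ of a strictly $K$-affinoid $Y$ carries a positive harmonic measure $\mu_{Y,x}$ on the finite set $\partial Y$, pre-super-harmonicity of a lower semicontinuous function finite on $\partial Y$ amounts to $u(x)\ge\int_{\partial Y}u\,\mathrm{d}\mu_{Y,x}$ for all such $Y,x$, Fatou's lemma (valid since the $f_{n}$ are locally bounded below and $\partial Y$ is compact) carries this inequality from the $f_{n}$ to $f$, and one checks that it survives lower semicontinuous regularisation.

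\textbf{Part~ii), the identity $f^{*}=f$ off $U(K)$, and the main obstacle.} This is the only genuinely non-formal point, and, as the text indicates, it is deduced from proposition~\ref{prop:shcontinuous}; its content is that $f=\liminf_{n}f_{n}$ is already lower semicontinuous at each point of type~$2$, $3$ or~$4$. The mechanism: by proposition~\ref{prop:shcontinuous} and the correspondence between super-harmonicity and concavity, each $f_{n}$ is concave along every segment contained in a virtual disc or annulus, and --- since a super-harmonic function never equals $-\infty$ --- concavity along the radial segments of a virtual disc forces $f_{n}$ to be non-decreasing as one moves away from the outer point $p$; hence $f_{n}\ge f_{n}(p)$ on the whole disc, and likewise $f_{n}\ge\min(f_{n}(p_{1}),f_{n}(p_{2}))$ on a virtual open annulus with outer points $p_{1},p_{2}$. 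For $x$ of type~$2$, $3$ or~$4$, a small enough connected neighbourhood $V$ decomposes, away from $x$, into virtual open discs and finitely many virtual open annuli attached to $x$ and to finitely many auxiliary type-$2$-or-$3$ points $x_{1},\dots,x_{k}$, so the bounds above give $f_{n}\ge\min(f_{n}(x),f_{n}(x_{1}),\dots,f_{n}(x_{k}))$ on $V$. Taking $\liminf_{n}$ and letting $V$ shrink, one uses $\liminf_{n}f_{n}(x)=f(x)$ to conclude that either $f\equiv+\infty$ near $x$ --- and then, by the same control, $f\equiv+\infty$ on $U$ --- or $\liminf_{y\to x}f(y)\ge f(x)$, i.e. $f^{*}(x)=f(x)$; since the auxiliary points $x_{j}$ are never $K$-rational, this is also why the statement is limited to $x\in U\setminus U(K)$. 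The hard part is exactly this last step: shrinking $V$ moves the points $x_{j}$, so one must combine the branchwise concavity bounds with the lower semicontinuity of each $f_{n}$ and a compactness argument near~$x$. This is carried out in~\cite[proposition~3.1.8]{TheseThuillier} and~\cite[proposition~8.26]{BR}, which I would invoke directly or reconstruct along the lines just sketched.
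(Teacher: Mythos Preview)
The paper does not prove this proposition: it is quoted from \cite[proposition~3.1.8]{TheseThuillier}, with the final clause (that $f^{*}=f$ on $U\setminus U(K)$) attributed to the preceding proposition~\ref{prop:shcontinuous} and to \cite[proposition~8.26]{BR}. Your proposal therefore goes considerably further than the paper itself, and what you write is essentially correct and in line with the arguments in those references.

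A few remarks on your sketch. In part~i), your treatment of $\min$, scaling, and sums via the Dirichlet problem is clean and correct. In part~ii), the decomposition $\liminf_{n}f_{n}=\sup_{m}\inf_{n\ge m}f_{n}$ followed by the decreasing-limit and increasing-limit (Cartan-type) theorems is exactly the standard route, and your alternative via harmonic measures and Fatou is also valid. For the last clause, your key geometric observation is right and worth making explicit: the radial segment from the outer point~$p$ of a virtual disc to a type-1 point has \emph{infinite} length in the canonical gauge, so a concave function on it with values in $\R\cup\{+\infty\}$ cannot have a negative outgoing slope at~$p$ --- this is what forces $f_{n}\ge f_{n}(p)$ on the disc. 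You correctly flag the remaining subtlety (shrinking~$V$ moves the auxiliary points~$x_{j}$, so one must interchange the limit in~$n$ with the limit $x_{j}\to x$) and defer to the same references the paper invokes; that is entirely appropriate here, since the paper does no more.
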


\begin{corollary}\label{cor:Rdsh}
Let~$U$ be a connected open subset of~$X$ on which the sheaf~$\Fs$ is free and~$d$ be a derivation on~$\Os(U)$. The map~$\log(\Rc^{d})^*$ is either identically equal to~$+\infty$ or super-harmonic on~$U$.
\end{corollary}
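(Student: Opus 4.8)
The plan is to exhibit $\log(\Rc^d)$ as (the $\liminf$ of) a sequence of super-harmonic functions and then invoke Proposition~\ref{prop:propsh}~(ii). Recall that $\Rc^d(x,(\Fs,\nabla)) = \liminf_{n\ge 1} \big(|G_n(x)/n!|^{-1/n}\big)$, where $G_0 = \mathrm{Id}$, $G_1 = G$ is the matrix of $\nabla$ with respect to a fixed basis of~$\Fs$ on~$U$ and $G_{n+1} = d(G_n) + G_n G$. Taking logarithms,
\[
\log(\Rc^d(x)) = \liminf_{n\ge 1} \frac{-1}{n}\Big(\log\big|G_n(x)\big| - \log|n!|\Big)
= \liminf_{n\ge 1}\Big(\frac{-1}{n}\log\big|G_n(x)\big| + c_n\Big),
\]
with $c_n = \tfrac{1}{n}\log|n!|$ a constant (independent of~$x$), and where $|G_n(x)|$ denotes the maximum of the absolute values of the entries of the matrix $G_n$ evaluated at~$x$. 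So I would set $f_n := \tfrac{-1}{n}\log|G_n| + c_n$ and aim to show each $f_n$ is super-harmonic on~$U$, so that $\log(\Rc^d) = \liminf_n f_n$ is either $\equiv +\infty$ or has super-harmonic lower semicontinuous regularisation; since moreover $f^* = f$ off $U(K)$ and $\log(\Rc^d)$ takes the value $+\infty$ only possibly on $U(K)$-points (where $G_n(x)$ could vanish) — and in fact, if it is not identically $+\infty$ on~$U$, Proposition~\ref{prop:propsh}~(ii) gives that $(\log\Rc^d)^*$ is super-harmonic, which is exactly the statement.

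The heart of the matter is therefore: for each fixed~$n$, the function $x\mapsto -\log|G_n(x)|$ is super-harmonic on~$U$ (adding the constant $c_n$ and multiplying by $1/n \ge 0$ preserves this by Proposition~\ref{prop:propsh}~(i)). Now $|G_n(x)| = \max_{i,j} |(G_n)_{ij}(x)|$, and $-\log\max_{i,j}|(G_n)_{ij}| = \min_{i,j}\big(-\log|(G_n)_{ij}|\big)$. Each entry $(G_n)_{ij}$ lies in $\Os(U)$, so by the proposition on basic examples of super-harmonic functions, $-\log|(G_n)_{ij}|$ is super-harmonic on~$U$ provided $(G_n)_{ij}$ vanishes identically on no connected component of~$U$ — and $U$ is connected by hypothesis, so this holds unless $(G_n)_{ij}$ is identically zero, in which case $-\log|(G_n)_{ij}| \equiv +\infty$, which is harmless for taking a minimum as long as not all entries vanish identically. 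If all entries of $G_n$ vanish identically on~$U$ then $|G_n| \equiv 0$ and, one checks from the recursion, $G_m \equiv 0$ for all $m \ge n$, forcing $\Rc^d \equiv +\infty$ on~$U$ — the excluded case. Otherwise, a finite $\min$ of super-harmonic functions is super-harmonic by Proposition~\ref{prop:propsh}~(i), so $f_n$ is super-harmonic, and we conclude as above.

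The main obstacle is bookkeeping around the value $+\infty$: one must be careful that "$-\log|f|$ super-harmonic for $f\in\Os(U)$ not identically zero on a component" is applied to entries that may genuinely vanish identically, and that the minimum over $(i,j)$ and the $\liminf$ over~$n$ interact correctly with Proposition~\ref{prop:propsh}~(ii)'s dichotomy. The clean way is to dispatch the case "$G_n \equiv 0$ for some~$n$" first (which yields $\Rc^d \equiv +\infty$), and in the remaining case note that for every~$n$ at least one entry of $G_n$ is a nonzero element of~$\Os(U)$, so $f_n$ is a genuine ($\R\cup\{+\infty\}$-valued, not identically $+\infty$) super-harmonic function; then Proposition~\ref{prop:propsh}~(ii) applies verbatim to the sequence $(f_n)_{n\ge 1}$, provided it is locally bounded below — but local boundedness below of $\log(\Rc^d)$ follows from Theorem~\ref{thm:Cauchypadique}'s argument (the estimate $\|G_{r,n}\|^{1/n} \le \max(\|d\|,\|G_{r,1}\|)$ shows $\Rc^d$ is locally bounded away from~$0$), so each $f_n$ is too on compact neighbourhoods, giving the required local lower bound. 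This completes the reduction to the already-quoted facts.
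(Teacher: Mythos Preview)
Your proposal is correct and follows essentially the same route as the paper: write $\log(\Rc^d)$ as the $\liminf$ of the sequence $f_n = -\tfrac{1}{n}\log|G_n| + c_n$, observe that each $f_n$ is super-harmonic (as a finite $\min$ of functions of the form $-\log|g|$ with $g\in\Os(U)$), check local boundedness below via the estimate $\|G_n\|_V^{1/n}\le\max(\|d\|_V,\|G_1\|_V)$ from the proof of Theorem~\ref{thm:Cauchypadique}, and invoke Proposition~\ref{prop:propsh}~(ii). The paper's own proof is terser---it takes the super-harmonicity of each $f_n$ for granted and only spells out the local-boundedness step---whereas you additionally unpack the entrywise $\min$ and dispatch the degenerate case $G_n\equiv 0$ (which indeed must be handled, since such an $f_n$ would be identically $+\infty$ and hence not super-harmonic in the sense required by Proposition~\ref{prop:propsh}~(ii)); but this is elaboration, not a different argument.
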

\begin{proof}
With the notations of formula~(\ref{eq:radiusd}), for every~$x$ in~$U$, we have 
\[\log(\Rc^{d})(x,(\Fs,\nabla)) =  \liminf_{n\ge 1} \left(-\frac{1}{n}\, \log\left(\left|\frac{G_{n}(x)}{n!}\right|\right)\right).\]
We only need to check that the above sequence is locally bounded from below. Let~$V$ be a compact subset of~$U$. Arguing as in the proof of theorem~\ref{thm:Cauchypadique}, we show that the sequence $\big((\|G_{n}\|_{V} / |n !|)^{1/n}\big)_{n\ge 1}$ is bounded.
\end{proof}

\begin{corollary}\label{cor:logRT0sh}
Assume that~$X$ is an open disc or annulus endowed with the empty weak triangulation on which~$\Fs$ is free. Then the map~$\log(\Rc_{\emptyset})$ is super-harmonic on~$X$. 
\end{corollary}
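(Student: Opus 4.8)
The plan is to deduce this from Corollary~\ref{cor:Rdsh} applied to a well-chosen derivation, together with the concrete formula for $\Rc_\emptyset$ in coordinates and the second part of Proposition~\ref{prop:propsh} (which removes the need to pass to the lower semicontinuous regularization away from rational points). First I would fix a coordinate $t$ on $X$ and take $d = \mathrm{d}/\mathrm{d}t$ as the derivation on $\Os(X)$; since $\Fs$ is free, the connection $\nabla$ is given by a matrix $G$, and $\Rc^d(\cdot,(\Fs,\nabla))$ is defined by formula~(\ref{eq:radiusd}). When $X = D^-(0,R)$ is a disc, comparing~(\ref{eq:radiusd}) with formula~(\ref{eq:formularadius1}) gives immediately
\[\Rc_\emptyset(x,(\Fs,\nabla)) = \min\!\left(\frac{1}{R}\,\Rc^d(x,(\Fs,\nabla)),\,1\right) = \min\!\left(\rho_\emptyset^{-1}\,\Rc^d,\,1\right),\]
since $\rho_\emptyset \equiv R$ on the disc. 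When $X = C^-(c;R_1,R_2)$ is an annulus, the same formula~(\ref{eq:RdT0}) (applied with $S_0$ the empty triangulation, which is the smallest one here) yields $\Rc_\emptyset = \min(\rho_\emptyset^{-1}\,\Rc^d,\,1)$ with $\rho_\emptyset(x) = |(t-c)(x)|$.

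Taking logarithms, this reads
\[\log(\Rc_\emptyset) = \min\big(\log(\Rc^d) - \log(\rho_\emptyset),\,0\big).\]
Now I would argue term by term. By Corollary~\ref{cor:Rdsh}, the function $\log(\Rc^d)^*$ is either identically $+\infty$ or super-harmonic on $X$; the first case is excluded because $\Rc_\emptyset$ takes finite values (indeed $\Rc_\emptyset \le 1$ everywhere), so $\log(\Rc^d)^*$ is super-harmonic. By Lemma~\ref{lem:rhoh}, $\log(\rho_\emptyset)$ is harmonic on $X$, hence $-\log(\rho_\emptyset)$ is super-harmonic. By Proposition~\ref{prop:propsh}~i), the sum $\log(\Rc^d)^* - \log(\rho_\emptyset)$ is super-harmonic, and since the constant $0$ is harmonic (hence super-harmonic), the minimum $g := \min\big(\log(\Rc^d)^* - \log(\rho_\emptyset),\,0\big)$ is super-harmonic on $X$. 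In particular $g$ is lower semicontinuous. It remains to identify $g$ with $\log(\Rc_\emptyset)$. Outside $X(K)$, Proposition~\ref{prop:propsh}~ii) (and the local lower boundedness established in the proof of Corollary~\ref{cor:Rdsh}) gives $\log(\Rc^d)^* = \log(\Rc^d)$, so $g = \log(\Rc_\emptyset)$ there. At a rational point $x$, Theorem~\ref{thm:Cauchypadique} says $\Rc_\emptyset$ is constant in a neighbourhood of $x$, so $\log(\Rc_\emptyset)$ is continuous at $x$; on the other hand $g$ is lower semicontinuous and agrees with $\log(\Rc_\emptyset)$ on the complement of the discrete set of rational points near $x$, which forces $g(x) \ge \log(\Rc_\emptyset)(x)$, while super-harmonicity of $g$ combined with the fact that $g$ cannot exceed a continuous function to which it is equal off a discrete set gives the reverse inequality $g(x) \le \log(\Rc_\emptyset)(x)$. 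Hence $g = \log(\Rc_\emptyset)$ everywhere, and $\log(\Rc_\emptyset)$ is super-harmonic.

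I expect the main obstacle to be the bookkeeping at type~1 points: Corollary~\ref{cor:Rdsh} only controls $\log(\Rc^d)^*$ rather than $\log(\Rc^d)$ itself, so one must check that $\log(\Rc_\emptyset)$ — which is what we actually want to be super-harmonic — coincides with this regularized object. The clean way around this is exactly the observation that the two functions can differ only at the discrete set of $K$-rational points, where Theorem~\ref{thm:Cauchypadique} guarantees local constancy of $\Rc_\emptyset$; a super-harmonic function and a locally constant function that agree on the complement of a discrete set must in fact agree everywhere. Apart from this point the argument is a routine combination of the stability properties in Proposition~\ref{prop:propsh}, Lemma~\ref{lem:rhoh}, and Corollary~\ref{cor:Rdsh}. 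One should also note that the disc and annulus cases are genuinely uniform once written in terms of $\rho_\emptyset$, so there is no real case split beyond invoking Lemma~\ref{lem:rhoh}.
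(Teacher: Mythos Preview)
Your approach is essentially the paper's: write $\log(\Rc_\emptyset) = \min(\log(\Rc^d) - \log(\rho_\emptyset), 0)$, invoke Corollary~\ref{cor:Rdsh}, Lemma~\ref{lem:rhoh} and Proposition~\ref{prop:propsh} to see that the regularized version $g = \min(\log(\Rc^d)^* - \log(\rho_\emptyset), 0)$ is super-harmonic, and then identify $g$ with $\log(\Rc_\emptyset)$ using Theorem~\ref{thm:Cauchypadique} at rational points.

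However, your handling of the rational-point step contains two errors. First, the $K$-rational points of a Berkovich disc or annulus are \emph{not} discrete (any small disc around one contains many others); what is true and sufficient is that the non-rational points are dense. Second, and more importantly, the two inequalities are reversed. Lower semicontinuity of $g$ together with $g = \log(\Rc_\emptyset) = c$ on the dense set of non-rational points near $x$ gives $g(x) \le c$ (for every $\varepsilon > 0$ there is a neighbourhood on which $g > g(x) - \varepsilon$, hence $c > g(x) - \varepsilon$), not $g(x) \ge c$. Conversely, it is \emph{super-harmonicity} that yields $g(x) \ge c$: on a small closed disc $Y$ around $x$ with Shilov boundary $\{\eta\}$, the constant $c$ is harmonic and $g(\eta) = c$, so the defining comparison property forces $g \ge c$ on $Y$. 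With these two corrections your argument goes through. The paper sidesteps this bookkeeping by arguing instead that $\log(\Rc_\emptyset)$ itself is lower semicontinuous (locally constant near rational points by Theorem~\ref{thm:Cauchypadique}, and equal to its regularization elsewhere by Proposition~\ref{prop:propsh}~\textit{ii}), whence it coincides with $g$ directly.

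A minor aside: your exclusion of the case $\log(\Rc^d)^* \equiv +\infty$ is not actually justified by $\Rc_\emptyset \le 1$ (that bound holds regardless of $\Rc^d$), though in that degenerate case one gets $g \equiv 0$, which is harmonic, so the conclusion is trivial anyway.
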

\begin{proof}
Set the notations as in formula~(\ref{eq:RdT0}). We have
\[\log(\Rc_{\emptyset}) = \min (\log(\Rc^d) - \log(\rho_{\emptyset}),0).\]

By proposition~\ref{prop:propsh}, \textit{ii}), and lemma~\ref{lem:rhoh}, the map $\min(\log(\Rc^d) - \log(\rho_{\emptyset}),0)$ may only fail to be lower semicontinuous at rational points. By theorem~\ref{thm:Cauchypadique}, it is actually constant in the neighbourhood of rational points, hence lower semicontinuous. Thus, we have
\[\log(\Rc_{\emptyset}) = \min (\log(\Rc^d)^* - \log(\rho_{\emptyset}),0).\]
By the preceding corollary, lemma~\ref{lem:rhoh} and proposition~\ref{prop:propsh}, \textit{i)}, it is super-harmonic.
\end{proof}

\subsection{Proof of theorem~\ref{thm:continuousandfinite} in the boundary-free case}

Let us begin with some reductions. Thanks to lemma~\ref{lem:basechange}, we may extend the base field and assume that~$K$ is algebraically closed, non-trivially valued and maximally complete and that~$X$ is strictly $K$-affinoid. We will do so in the rest of the section.

We begin with the simple case of a differential module on an open disc or annulus. We will follow the strategy that was implemented by the second author in~\cite[proof of theorem~2.14]{finiteness}.

\begin{notation}
For $c\in K$ and $R>0$, we denote~$\eta_{c,R}$ the unique point of the Shilov boundary of the disc~$D^+(c,R)$. We set $\eta_{R} = \eta_{0,R}$.
\end{notation}

\begin{lemma}\label{lem:borddisque}
Assume that~$X$ is a closed disc~$D^+(c,R)$ endowed with the smallest weak triangulation $S_{0} = \{\eta_{c,R}\}$. Let~$J$ be the segment $[c,\eta_{c,R}]$. The map $\Rc_{S_{0}}(\cdot,(\Fs,\nabla))$ is piecewise $\log$-linear on~$J$. All its slopes are non-positive. If the last slope is zero, then it is constant on the open disc~$D^-(c,R)$.
\end{lemma}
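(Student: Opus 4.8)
The plan is to reduce to the computation in coordinates and to exploit the super-harmonicity of $\log(\Rc_{S_0})$ established in Corollary~\ref{cor:logRT0sh}. First, I would invoke Theorem~\ref{thm:Kedlaya}: after fixing the coordinate $t$ with $c = t(c)$, the connection $(\Fs,\nabla)$ on the closed disc $D^+(c,R)$ restricts to a disc on which $\Fs$ is free and is thus attached to a differential module $(M,D)$ over $(\Os(X),d)$, $d = \mathrm{d}/\mathrm{d}t$. Theorem~\ref{thm:Kedlaya} then gives that the embedded radius $\Rc^{\mathrm{emb}}(\cdot,(\Fs,\nabla))$ is concave, continuous and piecewise $\log$-linear on $J = [c,\eta_{c,R}]$, with controlled slopes. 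Since here $\rho_{S_0}(x) = R$ is constant on $D^+(c,R)$ (the maximal disc avoiding $\{\eta_{c,R}\}$ centred at any rational point has embedded radius $R$), formula~(\ref{eq:radiusembT}) gives $\Rc_{S_0}(x,(\Fs,\nabla)) = \Rc^{\mathrm{emb}}_{S_0}(x,(\Fs,\nabla))/R = \min(\Rc^{\mathrm{emb}}(x,(\Fs,\nabla)),R)/R$ along $J$. Hence $\Rc_{S_0}(\cdot,(\Fs,\nabla))$ is the minimum of a $\log$-concave piecewise $\log$-linear function and the constant $1$, so it is itself piecewise $\log$-linear on $J$.

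Next I would pin down the sign of the slopes. Parametrise $J$ by $r \mapsto \eta_{c,r}$ for $r \in (0,R]$, with the gauge giving $\log(r)$ as coordinate, so that $r = R$ corresponds to the boundary point $\eta_{c,R}$. By Corollary~\ref{cor:logRT0sh}, $\log(\Rc_{\emptyset})$ is super-harmonic on the open disc $D^-(c,R)$; restricted to the segment $(c,\eta_{c,R})$ this means (by the remarks in section~\ref{sec:potential} identifying smooth super-harmonic functions on a segment with concave functions, and by Proposition~\ref{prop:shcontinuous}) that $\log(\Rc_{S_0})$ is concave in the variable $\log(r)$. Combined with the bound $\Rc_{S_0} \le 1$ and the fact that $\Rc_{S_0}$ is constant near the rational endpoint $c$ (Theorem~\ref{thm:Cauchypadique}), i.e. the slope at the $c$-end of $J$ is $0$, concavity forces the slope to be non-increasing as $r$ grows; since it starts at $0$ it stays $\le 0$ throughout $J$. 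This is exactly the assertion that all slopes are non-positive.

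For the last assertion, suppose the slope of $\Rc_{S_0}(\cdot,(\Fs,\nabla))$ on the final segment of $J$ (the one abutting $\eta_{c,R}$) is zero. By concavity and the fact that all slopes are $\le 0$ and the first slope (near $c$) is $0$, a zero final slope forces the slope to be identically $0$ on all of $J$, so $\Rc_{S_0}$ is constant on $J$. It then remains to propagate this constancy from the skeleton segment $J$ to the whole open disc $D^-(c,R)$. Here I would use local constancy of the radius off the skeleton together with continuity: more directly, one observes that for any rational point $x' \in D^-(c,R)$ the segment from $x'$ to $\eta_{c,R}$ meets $J$, the value of $\Rc_{S_0}$ is constant along that segment by the same concavity/non-positive-slope argument applied on the appropriate sub-disc (or: $D^-(c,R)\setminus\{c\}$ retracts onto $J$ minus its $c$-endpoint, and along each incoming segment the radius is non-decreasing toward $\eta_{c,R}$ yet bounded above by its constant value on $J$, hence equal to it). Thus $\Rc_{S_0}$ is constant on $D^-(c,R)$.

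The main obstacle I anticipate is the last step: carefully justifying that constancy on the one-dimensional skeleton $J$ forces constancy on the two-dimensional open disc $D^-(c,R)$. This requires the monotonicity of the radius along branches pointing toward the boundary (a consequence of super-harmonicity via Proposition~\ref{prop:shcontinuous} applied to sub-segments, or of the concavity of $\Rc^{\mathrm{emb}}$ from Theorem~\ref{thm:Kedlaya} applied to sub-discs), plus the observation that any point of $D^-(c,R)$ lies on such a branch; the bookkeeping of "which sub-disc and which coordinate" is the only delicate part, everything else being a direct application of the quoted results.
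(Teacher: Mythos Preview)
Your proposal is correct and follows essentially the same route as the paper: relate $\Rc_{S_0}$ to $\Rc^{\mathrm{emb}}$ via the constant $\rho_{S_0}\equiv R$, invoke Theorem~\ref{thm:Kedlaya} for piecewise $\log$-linearity and concavity on~$J$, use Theorem~\ref{thm:Cauchypadique} to get first slope~$0$ (hence all slopes~$\le 0$), and for the last assertion rerun the same argument on the segment $[d,\eta_{c,R}]$ for another rational centre~$d$. Two small points: the detour through Corollary~\ref{cor:logRT0sh} is unnecessary, since concavity is already part of the conclusion of Theorem~\ref{thm:Kedlaya}; and in the final step you should invoke the maximal completeness of~$K$ (standing assumption in this section) to write an arbitrary point of $D^-(c,R)$ as $\eta_{d,r}$, so that the argument actually reaches every point and not just the rational ones --- your parenthetical alternative (``non-decreasing toward~$\eta_{c,R}$ yet bounded above'') has the monotonicity backwards and does not by itself yield equality.
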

\begin{proof}
On the closed disc~$D^+(c,R)$, we have $\Rc^\mathrm{emb}(\cdot,(\Fs,\nabla)) = R\, \Rc_{S_{0}}(\cdot,(\Fs,\nabla))$. Hence theorem~\ref{thm:Kedlaya} also holds for the map $\Rc_{S_{0}}(\cdot,(\Fs,\nabla))$, which proves the first statement.

By theorem~\ref{thm:Cauchypadique}, the map $\Rc_{S_{0}}(\cdot,(\Fs,\nabla))$ is constant in the neighbourhood of~$c$. Hence, its first slope on~$J$ is~0. Since it is concave, all its slopes are non-positive.

Now, assume that the last slope of the map $\Rc_{S_{0}}(\cdot,(\Fs,\nabla))$ on the segment~$J$ is~0. The previous argument shows that it is indeed constant on~$J$. Let $y \in D^-(c,R)$. Since~$K$ is maximally complete, there exists $d \in D^-(c,R)(K)$ and $r \in [0,R)$ such that $y = \eta_{d,r}$ (see~\cite[section~1.4.4]{rouge}). The segment $[d,\eta_{d,R}] = [d,\eta_{c,R}]$ meets~$J$ in a neighbourhood of its end. Hence the last slope of the map on this segment is~0 too. Using the same arguments as before, we show that the map is actually constant on the whole segment. Hence $\Rc_{S_{0}}(y,(\Fs,\nabla)) = \Rc_{S_{0}}(\eta_{c,R},(\Fs,\nabla))$.
\end{proof}

We now deal with the case of an open disc or annulus. Recall that we assumed that~$K$ is maximally complete. Hence, by~\cite{Lazard} (which actually deals with the case of a disc, but the result for an annulus follows), any locally free sheaf on such a space is actually free.

We say that a property holds for almost every element of a set~$E$ if it holds for every element of~$E$ with a finite number of exceptions.

\begin{lemma}\label{lem:dirfinite}
Let $C = C^-(0;R_{1},R_{2})$ be an open annulus. Let~$x\in (\eta_{R_{1}},\eta_{R_{2}})$. Let~$\vec{v}_{1}$ and~$\vec{v}_{2}$ be the directions out of~$x$ towards~$\eta_{R_{1}}$ and~$\eta_{R_{2}}$ respectively. Let~$f$ be a super-harmonic function on~$C$. Assume that, for every direction~$\vec v$ out of~$x$, there exists a point~$x_{\vec{v}}$ in the direction of~$\vec v$ such that~$f$ is linear on the segment~$[x,x_{\vec v}]$, with slope~$p_{\vec{v}}$. Assume moreover that there exists~$m>0$ such that, for any direction~$\vec{v}$ different from~$\vec{v}_{1}$ and~$\vec{v}_{2}$, we have $p_{\vec{v}} \in \{0\} \cup [m,+\infty)$. 

Then, for almost every direction out of~$x$, the slope of the map~$f$ is zero.
\end{lemma}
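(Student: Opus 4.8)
The plan is to exploit that $\mathrm{dd}^c(f)$ is a non-positive Radon measure (by theorem~\ref{thm:Radon}, since $f$ is super-harmonic), together with the quantization hypothesis on the slopes, to show that only finitely many directions can carry a strictly positive slope. First I would note that for each direction $\vec v$ out of $x$, the directional derivative $\mathrm{d}_{\vec v}f(x)$ exists (proposition~\ref{prop:shcontinuous}) and, by the linearity hypothesis on $[x,x_{\vec v}]$, equals $p_{\vec v}$. For a finite subgraph $\Gamma$ of $C$ containing $x$ and finitely many of the segments $[x,x_{\vec v}]$, the mass of $\mathrm{dd}^c(f)$ at the point $x$ (relative to $\Gamma$) is $\sum_{\vec v\in T_x\Gamma} m_{\vec v}\,\mathrm{d}_{\vec v}f(x)$; since $K$ is algebraically closed all weights $m_{\vec v}$ are $1$, so this is $\sum_{\vec v} p_{\vec v}$. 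The key point is that, because $\mathrm{dd}^c(f)$ is a genuine measure on the ambient curve $C$, the total contribution picked up at $x$ from \emph{all} directions is finite: more precisely, for any finite set $T$ of directions out of $x$ one has $\sum_{\vec v\in T} p_{\vec v} \le \mathrm{dd}^c(f)(\{x\}) + (p_{\vec v_1}^- + p_{\vec v_2}^-)$ or a similar uniform bound, obtained by choosing $\Gamma$ to be the union of the skeleton $[\eta_{R_1},\eta_{R_2}]$ with the segments $[x,x_{\vec v}]$ for $\vec v\in T$ and using $\mathrm{dd}^c(f)(\{x\})\le 0$.

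Granting such a uniform bound $B$ on $\sum_{\vec v\in T}p_{\vec v}$ independent of the finite set $T\subset T_x\Gamma\setminus\{\vec v_1,\vec v_2\}$, the hypothesis $p_{\vec v}\in\{0\}\cup[m,+\infty)$ finishes the argument immediately: if infinitely many directions $\vec v$ had $p_{\vec v}\ne 0$, then $p_{\vec v}\ge m$ for each of them, and taking $T$ to consist of $\lceil B/m\rceil + 1$ such directions would give $\sum_{\vec v\in T} p_{\vec v}\ge (\lceil B/m\rceil+1)\,m > B$, a contradiction. Hence $p_{\vec v}=0$ for all but finitely many $\vec v$, which is the assertion.

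The main obstacle is the bookkeeping in the first step: making rigorous the claim that the positive outer slopes at $x$ summed over arbitrarily many directions stay bounded. One has to be a little careful because $\mathrm{dd}^c(f)$ is defined via finite subgraphs, and enlarging the subgraph by adding a new branch at $x$ changes which directional derivatives enter the mass at $x$; but monotonicity of the measure along refinements, plus the fact that each newly added branch $[x,x_{\vec v}]$ contributes $p_{\vec v}\ge 0$ to the mass at $x$ while only lowering (or keeping equal) the total mass that $\mathrm{dd}^c(f)$ assigns to a fixed neighbourhood of $x$, gives exactly the needed bound: the sum $\sum_{\vec v\in T} p_{\vec v}$ is controlled by $\mathrm{dd}^c(f)$ of a fixed compact neighbourhood of $x$ minus the (bounded) contribution of the two skeletal directions $\vec v_1,\vec v_2$. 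Once this uniform bound is in hand, the pigeonhole step above is purely formal.
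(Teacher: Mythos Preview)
Your approach is the same as the paper's: use that $\mathrm{dd}^c(f)$ is a non-positive Radon measure (theorem~\ref{thm:Radon}) together with the lower bound~$m$ on non-zero non-skeletal slopes to cap the number of such directions. Your second paragraph, the pigeonhole step, is exactly right and matches the paper.

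The gap is precisely where you locate it: your ``bookkeeping'' paragraph does not actually prove the uniform bound on $\sum_{\vec v\in T} p_{\vec v}$. Appealing to ``monotonicity of the measure along refinements'' and to $\mathrm{dd}^c(f)$ of a fixed compact neighbourhood is suggestive but not a proof; you never make precise how the Radon-measure value $\mathrm{dd}^c(f)(\{x\})$ relates to the partial sums $\sum_{\vec v\in T} p_{\vec v}$ over finite subgraphs, and the Laplacian on $C$ is defined by duality, not as a limit of graph Laplacians. The paper closes this gap with a single clean computation: take the segments $[x,x_{\vec v}]$ for $\vec v\in F\cup\{\vec v_1,\vec v_2\}$ (with $|F|=r$) all of a common length~$d$, and pair $\mathrm{dd}^c(f)$ against the tent function~$g$ that equals~$1$ at~$x$, $0$ at each~$x_{\vec v}$, is linear on each segment and constant outside. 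Then $\mathrm{dd}^c(g)$ is the explicit discrete measure $-\tfrac{r+2}{d}\delta_x + \sum_{\vec v}\tfrac{1}{d}\delta_{x_{\vec v}}$, and duality gives
\[
\langle \mathrm{dd}^c(f),g\rangle \;=\; \langle f,\mathrm{dd}^c(g)\rangle \;=\; \sum_{\vec v\in F\cup\{\vec v_1,\vec v_2\}} p_{\vec v} \;\ge\; p_{\vec v_1}+p_{\vec v_2}+rm.
\]
Since $g\ge 0$ and $\mathrm{dd}^c(f)\le 0$, the left side is $\le 0$, whence $r\le -(p_{\vec v_1}+p_{\vec v_2})/m$. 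This is exactly the bound you were aiming for, obtained without any limiting procedure over subgraphs. The paper also remarks that your more measure-theoretic intuition can be made rigorous via Baker--Rumely's result that super-harmonic functions are locally of bounded differential variation (\cite[theorem~8.19]{BR}), which directly gives a uniform bound on $|\mathrm{dd}^c(f)(\Gamma)|$ over finite subgraphs~$\Gamma$ near~$x$.
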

\begin{proof}
Let~$E$ be the set of directions out of~$x$, different from~$\vec{v}_{1}$ and~$\vec{v}_{2}$, on which the slope of the map~$f$ is not zero (hence at least~$m$). Let~$F$ be a finite subset of~$E$. Let~$r$ denote its cardinality. We may assume that there exists~$d$ such that, for any $\vec{v} \in F\cup\{\vec{v}_{1},\vec{v}_{2}\}$, the point~$x_{\vec{v}}$ lies at distance~$d$ from the point~$x$.

Let us consider the continuous function $g : C \to \R$ such that, for any $\vec{v} \in F\cup\{\vec{v}_{1},\vec{v}_{2}\}$, the restriction of~$g$ to~$[x,x_{\vec{v}}]$ is the linear map equal to~1 at~$x$ and~0 at~$x_{\vec{v}}$ and~$g$ is contant outside the complement of those segments. The function~$g$ is smooth with compact support. By definition of the Laplacian operator, we have
\begin{align*}
\la \mathrm{dd}^c(f), g\ra &= \la f, \mathrm{dd}^c(g) \ra\\
&= \bigg\langle f,  - \frac{r+2}{d}\, \delta_{x} + \sum_{\vec{v} \in F\cup\{\vec{v}_{1},\vec{v}_{2}\}} \frac{1}{d}\, \delta_{x_{\vec{v}}}  \bigg\rangle\\
&= \frac{1}{d} \bigg(-(r+2) f(x)  + \sum_{\vec{v} \in F\cup\{\vec{v}_{1},\vec{v}_{2}\}} (f(x) + d\, p_{\vec{v}}) \bigg)\\
&= \sum_{\vec{v} \in F\cup\{\vec{v}_{1},\vec{v}_{2}\}} p_{\vec{v}}\\
&\ge p_{\vec{v}_{1}} + p_{\vec{v}_{2}} + rm.
\end{align*}
Since~$f$ is super-harmonic, by theorem~\ref{thm:Radon}, the current $\mathrm{dd}^c(f)$ is non-positive, which implies that $r \le -(p_{\vec{v}_{1}} + p_{\vec{v}_{2}})/m$. We deduce that the set~$E$ is finite.
\end{proof}

\begin{remark}
It is possible to give another proof of the result using M.~Baker and R.~Rumely's theory. They actually show that a super-harmonic function $f : U \to \R$ is locally of bounded differential variation (see~\cite[theorem~8.19]{BR}): for any~$x\in U$, there exists an open neighbourhood~$V$ and a constant~$B$ such that, for any finite subgraph~$\Gamma$ of~$V$ that contains no points of type~1, we have $|\mathrm{dd}^c(f)(\Gamma)| \le B$. With the notations of the proof of the lemma, this also implies that~$r$ is bounded.
\end{remark}

\begin{proposition}\label{prop:finiteopendisc}
Assume that~$X$ is an open disc or annulus endowed with the empty weak triangulation. Then theorem~\ref{thm:continuousandfinite} holds.
\end{proposition}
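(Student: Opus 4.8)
The plan is to reduce everything to the super-harmonicity of $\log(\Rc_{\emptyset})$ established in corollary~\ref{cor:logRT0sh}, combined with the behaviour on segments coming from theorem~\ref{thm:Kedlaya} and the finiteness-of-directions lemma~\ref{lem:dirfinite}. First, since $K$ has been assumed maximally complete, $\Fs$ is free on $X$, so by corollary~\ref{cor:logRT0sh} the function $u := \log(\Rc_{\emptyset})$ is super-harmonic on~$X$. I would then treat the two cases of the statement: $X$ an open disc $D^-(c,R)$ and $X$ an open annulus $C^-(0;R_{1},R_{2})$ with skeleton $\Gamma_{\emptyset} = (\eta_{R_{1}},\eta_{R_{2}})$. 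The idea is that outside the skeleton nothing happens, and on the skeleton the function is piecewise log-linear with controlled slopes by theorem~\ref{thm:Kedlaya}.

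For property \textit{ii)} (local constancy outside a locally finite graph), the graph $\Gamma$ will be $\Gamma_{\emptyset}$ in the annulus case and a point (or the empty graph) in the disc case, possibly enlarged to a locally finite graph along the lines we need. The key step is: around any point $x$ of type~2 on the skeleton, I want to show that only finitely many directions emanating from $x$ carry a non-constant germ of $\Rc_{\emptyset}$. This is exactly lemma~\ref{lem:dirfinite}: I apply it to the super-harmonic function $u$, taking $\vec v_{1},\vec v_{2}$ the two directions along the skeleton. To invoke it I need (a) that in each direction $\vec v$ out of $x$ the function $u$ is log-linear on a short initial segment $[x,x_{\vec v}]$, and (b) that the non-zero slopes in directions transverse to the skeleton are bounded below by some fixed $m>0$. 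For (a) I observe that any segment out of $x$ not along the skeleton lies inside an open disc $D$ contained in $X$; pulling back $(\Fs,\nabla)$ to $D$ (where $\Rc_{\emptyset}$ on $X$ agrees, up to the relevant normalisation, with $\Rc_{S_{0}}$ for the disc, via equations~(\ref{eq:radiusembT}) and lemma~\ref{lem:borddisque}), theorem~\ref{thm:Kedlaya}/lemma~\ref{lem:borddisque} gives piecewise log-linearity on that segment with non-positive slopes, so linearity on a small enough initial piece. For (b), the same results bound the denominators of the slopes by $\mathrm{rk}\,\Fs$, so any non-zero slope is at least $1/\mathrm{rk}\,\Fs$ in absolute value; since transverse slopes are non-positive (lemma~\ref{lem:borddisque}, using that the germ extends $u$ which is constant near the type-1 end), I would switch to $-u$ or reverse the direction convention so that lemma~\ref{lem:dirfinite} applies with $m = 1/\mathrm{rk}\,\Fs$. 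Having finitely many non-constant directions at each type-2 point, a now-standard argument (as in~\cite{finiteness}, using that type-2 points are discrete along any segment and the retraction structure of the disc/annulus) assembles these into a locally finite subgraph $\Gamma \supset \Gamma_{\emptyset}$ outside of which $\Rc_{\emptyset}$ is locally constant; in a disc, $\Gamma$ reduces to the image of the branch points.

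For property \textit{iii)} (piecewise log-linearity on $\Gamma$ with slopes $\pm m/i$, $1 \le i \le \mathrm{rk}\,\Fs$), on the skeleton of the annulus this is precisely theorem~\ref{thm:Kedlaya} applied to the global differential module $(M,D)$ with $d = \mathrm{d}/\mathrm{d}t$, after translating $\Rc^{\mathrm{emb}}$ into $\Rc_{\emptyset}$ via equation~(\ref{eq:radiusembT}) (here $\rho_{\emptyset}(x) = |(t-c)(x)|$ is itself log-linear of slope $\pm1$ on the skeleton by lemma~\ref{lem:rhoh}, so the min in the formula for $\Rc_{\emptyset}$ only changes slopes by an integer and does not spoil the denominator bound); on the finitely many extra edges of $\Gamma$ transverse to the skeleton the same holds by lemma~\ref{lem:borddisque}. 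For property \textit{i)} (continuity), $\Rc_{\emptyset}$ is locally constant off $\Gamma$ by \textit{ii)}, so it suffices to check continuity at points of $\Gamma$; but $u = \log(\Rc_{\emptyset})$ is super-harmonic, hence by proposition~\ref{prop:shcontinuous} its restriction to any segment — in particular to $\Gamma$ — is continuous at every point of type~2, 3 or~4, and $\Gamma$ contains no point of type~1; combining the restriction to $\Gamma$ being continuous with local constancy on the (open) complement and the retraction $X \to \Gamma$ gives global continuity of $\Rc_{\emptyset}$ on $X$.

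The main obstacle I expect is step (b)/the sign bookkeeping in applying lemma~\ref{lem:dirfinite}: one must be careful that the germs of $u$ in directions transverse to the skeleton really do have slopes of the required sign and bounded-away-from-zero magnitude, which relies on feeding the disc version (lemma~\ref{lem:borddisque}) into a point sitting on the annulus skeleton, and on correctly normalising between $\Rc_{\emptyset}$ on $X$ and the disc radius $\Rc_{S_{0}}$ via~(\ref{eq:radiusembT}); once the hypotheses of lemma~\ref{lem:dirfinite} are verified the rest is assembling known pieces.
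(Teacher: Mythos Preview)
Your plan assembles the right ingredients but contains a genuine gap at the heart of property~\textit{(ii)}. You propose to apply lemma~\ref{lem:dirfinite} at \emph{each} type-2 point of the skeleton and then ``assemble these into a locally finite subgraph \dots\ using that type-2 points are discrete along any segment''. But type-2 points are not discrete along a segment of the skeleton: over an algebraically closed field they are dense there. So knowing that at every such point only finitely many transverse directions carry non-zero slope does not, by itself, yield a locally finite graph. The paper closes this gap with an extra argument you omit: at a point $x\in J$ that is \emph{not} a break-point of $LR_{|J}$, the two outward slopes along $J$ sum to zero; since the transverse outward slopes are $\ge 0$ (this is lemma~\ref{lem:borddisque} read in the outward direction) and super-harmonicity forces the total sum of outward slopes to be $\le 0$, every transverse slope at $x$ must vanish. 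Hence non-constant branches can attach to $J$ only at the (locally finite) set $B_{J}$ of break-points, and only then is lemma~\ref{lem:dirfinite} invoked, at those finitely many points.

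Two smaller points. First, your sign worry is misdiagnosed: the outward slope from $x$ into a transverse disc is \emph{non-negative} (lemma~\ref{lem:borddisque} gives non-positive slopes from centre to boundary, so the reverse direction is non-negative), which is exactly the hypothesis $p_{\vec v}\in\{0\}\cup[m,+\infty)$ of lemma~\ref{lem:dirfinite}; switching to $-u$ would destroy super-harmonicity and is neither needed nor permissible. Second, once you leave the skeleton and enter a disc $D$, the same finiteness question recurs at type-2 points of $D$: the paper handles this by an explicit iteration (bounding the number of break-points on each $J_{D}$ via concavity, bounded denominators, and the finite directional derivative at $\eta_{D}$ supplied by proposition~\ref{prop:shcontinuous}), together with a termination argument using theorem~\ref{thm:Kedlaya}. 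Your ``now-standard argument'' skips this recursive structure, which is where most of the work lies.
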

\begin{proof}
Assume that~$X$ is the open annulus $C^-(0;R_{1},R_{2})$. This restriction is only a matter of notation and the case of a disc may be handled in the same way.

By theorem~\ref{thm:Kedlaya}, the map $\log(\Rc^\mathrm{emb}(\cdot,(\Fs,\nabla)))$ is continuous and piecewise linear on $J = (\eta_{R_{1}},\eta_{R_{2}})$. We also know that the (archimedean) absolute values of its non-zero slopes in the directions out of a point of~$J$ are uniformly bounded below by a positive constant (the inverse of the rank of~$\Fs$). Moreover, by lemma~\ref{lem:borddisque}, the slopes that do not correspond to the directions~$\vec{v}_{1}$ and~$\vec{v}_{2}$ towards~$\eta_{R_{1}}$ and~$\eta_{R_{2}}$ respectively are non-negative (a minus sign appeared since we now compute the slopes in the other direction). By  formula~(\ref{eq:radiusembT0}) and the explicit description of~$\rho_{\emptyset}$ (constant out of~$J$, log-linear with slope~1 on~$J$; see the proof of lemma~\ref{lem:rhoh}), the map $LR = \log(\Rc_{\emptyset}(\cdot,(\Fs,\nabla)))$ satisfies the same properties.

By corollary~\ref{cor:logRT0sh}, the map~$LR$ is super-harmonic. Let~$x$ be a point of~$J$. By lemma~\ref{lem:dirfinite}, there may only be a finite number of directions out of~$x$ in which the slopes of~$LR$ are non-zero. Recall that, by lemma~\ref{lem:borddisque}, a zero slope correspond to an open disc on which the map~$LR$ is constant. In particular, the map~$LR$ is locally a smooth function, which enables to compute its Laplacian by formula~(\ref{eq:ddc}).

Let~$B_{J}$ be the subset of break-points of~$J$, \textit{i.e.} the set of points~$x$ on~$J$ at which the slope of the map~$LR$ on~$J$ changes. We want to prove that every connected component of~$C\setminus J$ on which the map~$LR$ is not constant branches at a point of~$B_{J}$. Let~$x\in J\setminus B_{J}$. The map~$LR$ has non-negative slopes at~$x$ in the directions outside~$J$ and the slopes in the directions~$\vec{v}_{1}$ and~$\vec{v}_{2}$ balance out. On the other hand, by super-harmonicity, the Laplacian of~$LR$ at the point~$x$, \textit{i.e.} the sum of all the slopes out of~$x$, is a non-positive real number, which forces all the slopes in direction different from~$\vec{v}_{1}$ and~$\vec{v}_{2}$ to be zero.

Let~$D$ be a connected component of $X\setminus J$, necessarily an open disc, and let~$\eta_{D}$ be its boundary point. Let~$x \in D(K)$ and set~$J_{D} = [x,\eta_{D})$. Remark that the map~$LR$ may only have finitely many different slopes on~$J_{D}$ in the neighbourhood of~$\eta_{D}$. Indeed, on~$J_{D}$, it is concave, piecewise linear with slopes that are rational numbers with bounded denominators, and by proposition~\ref{prop:shcontinuous}, it admits a finite derivative at the point~$\eta_{D}$ in the direction of~$D$. In particular, the number of break-points of~$LR$ on~$J_{D}$ is finite.

We now use the argument of the third and fourth paragraphs repeatedly for any connected component of $X\setminus J$ on which~$LR$ is not constant. We need only repeat the process a finite number of times, otherwise we would find an infinite number of breaks on some segment $[\eta_{c,R'_{1}},\eta_{c,R'_{2}}]$ inside a closed sub-disc, which would contradict theorem~\ref{thm:Kedlaya}. This proves that the map~$LR$ is locally constant outside a finite graph~$\Gamma$, which is a finite union of segments of the form $[\eta_{d,R''_{1}},\eta_{d,R''_{2}}]$.

To prove property~\textit{i)} of theorem~\ref{thm:continuousandfinite}, \textit{i.e.} that $\Rc_{\emptyset}(\cdot,(\Fs,\nabla))$ is continuous, it is now enough to prove that it is continuous on~$\Gamma$, hence on a segment of the form $[\eta_{d,R''_{1}},\eta_{d,R''_{2}}]$. This follows from theorem~\ref{thm:Kedlaya}. Property~\textit{iii)} also follows from theorem~\ref{thm:Kedlaya}.
\end{proof}

The following result may be proved by the same arguments.

\begin{corollary}\label{cor:finitegraph}
Fix the setting as in proposition~\ref{prop:finiteopendisc}.

Assume that $X = D^-(0,R)$ and let $J=[0,\eta_{R}]$. Assume that the restriction of the map $\Rc^\mathrm{emb}(\cdot,(\Fs,\nabla))$ to~$J$ is piecewise $\log$-linear with a finite number of slopes. Then the graph~$\Gamma$ of theorem~\ref{thm:continuousandfinite} may be chosen finite in the direction of~$\eta_{R}$: there exists a finite subgraph~$\bar\Gamma$ of the closure $\overline{D^-(0,R)} = D^-(0,R) \cup \{\eta_{R}\}$ of $D^-(0,R)$ such that $\bar\Gamma \cap D^-(0,R) = \Gamma \cap D^-(0,R)$.

Assume that $C = C^-(0;R_{1},R_{2})$ and let $J=[\eta_{R_{1}},\eta_{R_{2}}]$. Let~$R'\in (R_{1},R_{2})$. Assume that the restriction of the map $\Rc^\mathrm{emb}(\cdot,(\Fs,\nabla))$ to~$[\eta_{R'},\eta_{R_{2}})$ is piecewise $\log$-linear with a finite number of slopes. Then the graph~$\Gamma$ of theorem~\ref{thm:continuousandfinite} may be chosen finite in the direction of~$\eta_{R_{2}}$: there exists a finite subgraph~$\bar\Gamma$ of $\overline{C^-(0;R',R_{2})} = C^-(0;R',R_{2}) \cup \{\eta_{R'},\eta_{R_{2}}\}$ such that $\bar\Gamma \cap C^-(0;R',R_{2}) = \Gamma \cap C^-(0;R',R_{2})$. 

The same statement holds if we consider the other end of the annulus.
\end{corollary}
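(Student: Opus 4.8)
The plan is to mimic the proof of Proposition~\ref{prop:finiteopendisc}, simply keeping track of the finiteness of the graph $\Gamma$ in the distinguished direction. Recall that in that proof the graph $\Gamma$ was built inductively: one starts from the skeleton $J$ (or the segment $J = [0,\eta_R]$ in the disc case), records its break-points $B_J$, and then for each connected component $D$ of $X\setminus J$ on which $LR$ is not constant, one shows $D$ branches off $J$ at a point of $B_J$, replaces $J$ by $J_D$ inside $D$, and repeats. The whole process terminates because Theorem~\ref{thm:Kedlaya} forbids infinitely many break-points on any segment inside a closed sub-disc. So the only thing to check is that, under the extra hypothesis that $\Rc^{\mathrm{emb}}(\cdot,(\Fs,\nabla))$ restricted to $J$ (resp.\ to $[\eta_{R'},\eta_{R_2})$) has finitely many slopes, the inductive construction produces only finitely many segments that accumulate towards $\eta_R$ (resp.\ $\eta_{R_2}$), so that $\Gamma$ meets a suitable closed sub-disc/sub-annulus in a finite graph.

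First I would treat the disc case $X = D^-(0,R)$, $J = [0,\eta_R]$. By the argument in Proposition~\ref{prop:finiteopendisc} (paragraph three), every component of $X\setminus J$ on which $LR$ is not constant branches at a break-point of $LR$ on $J$; the break-points of $LR$ on $J$ coincide, up to the explicit modification by $\rho_\emptyset$, with those of $\Rc^{\mathrm{emb}}$ on $J$, which by hypothesis are finite in number (indeed finitely many slopes on the whole of $J$ forces finitely many break-points). Hence only finitely many ``level-one'' segments are attached to $J$, and each of them is a segment of the form $[\eta_{d,R_1''},\eta_{d,R_2''}]$ whose branch point $\eta_{d,R_2''}$ lies at some radius $R_2'' < R$. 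Those branch points form a finite set, so they stay at radius $\le R_0$ for some $R_0 < R$; consequently the part of $\Gamma$ produced so far, together with all the further branches emanating from it (which by the argument of Proposition~\ref{prop:finiteopendisc} all lie inside closed sub-discs $D^+(d,R_0)$, hence are finite unions of segments inside the compact region $\tau^{-1}([0,R_0])$), never enters the ``collar'' $\tau^{-1}((R_0,R))$ except along $J$ itself. Taking $\bar\Gamma = \overline{\Gamma\cap\tau^{-1}([0,R_0])} \cup [\eta_{R_0},\eta_R]$ — a finite subgraph of $\overline{D^-(0,R)}$ — gives the claimed $\bar\Gamma$ with $\bar\Gamma\cap D^-(0,R) = \Gamma\cap D^-(0,R)$ after, if necessary, re-choosing $\Gamma$ so that on the collar it equals $J$.

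For the annulus case $C = C^-(0;R_1,R_2)$, the argument is identical after replacing $J$ by $[\eta_{R'},\eta_{R_2})$: the hypothesis that $\Rc^{\mathrm{emb}}$ has finitely many slopes on $[\eta_{R'},\eta_{R_2})$ gives finitely many break-points of $LR$ there, hence finitely many components of $C\setminus J$ branching off the sub-skeleton $[\eta_{R'},\eta_{R_2})$ on which $LR$ is non-constant, and as before their branch points accumulate nowhere near $\eta_{R_2}$, so they stay inside $\tau^{-1}([R',R'''])$ for some $R''' < R_2$; all the deeper branches live in closed sub-discs and stay there. One then sets $\bar\Gamma$ as in the statement. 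The symmetric statement for the other end follows by exchanging $R_1$ and $R_2$.

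**Main obstacle.** The genuinely delicate point is the same one as in Proposition~\ref{prop:finiteopendisc}: ensuring the inductive construction of $\Gamma$ terminates, i.e.\ that one does not create an infinite nested family of branches marching towards the distinguished end. Here this is controlled precisely by the extra finite-slope hypothesis at that end combined with Theorem~\ref{thm:Kedlaya} (finiteness of break-points on closed sub-annuli) and Proposition~\ref{prop:shcontinuous} (existence of a finite one-sided derivative at the boundary point of each component, via super-harmonicity of $LR$). The one bookkeeping subtlety is that $\Gamma$ in Theorem~\ref{thm:continuousandfinite} is only determined up to enlargement, so one must phrase the conclusion as ``$\Gamma$ may be chosen so that $\bar\Gamma\cap D^-(0,R) = \Gamma\cap D^-(0,R)$'' — i.e.\ first build $\Gamma$ minimally as above, then observe it already has the required finiteness, rather than trying to trim an arbitrary given $\Gamma$.
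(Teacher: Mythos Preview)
Your proposal is correct and is precisely the intended argument: the paper itself gives no separate proof for this corollary, stating only that it ``may be proved by the same arguments'' as Proposition~\ref{prop:finiteopendisc}. You have faithfully unpacked what those arguments yield under the additional finite-slope hypothesis --- namely that the break-points of $LR$ on $J$ (resp.\ on $[\eta_{R'},\eta_{R_2})$) are finite in number, hence bounded away from the open end, so that all subsequent branching in the inductive construction stays inside a fixed closed sub-disc or sub-annulus where the process was already known to terminate.
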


The main issue is now to understand the behaviour of the radius of convergence at a point of the triangulation. We will first consider points of type~2 and study the local structure of the curve~$X$ in the neighbourhood of those points. The main result we use is adapted from A.~Ducros's manuscript~\cite{RSSen} (see the proof of th\'eor\`eme~3.4.1 and also~\cite[theorem~3.2.1]{finiteness2}). 

Let us recall a few definitions from~\cite{RSSen}. A branch roughly corresponds to a direction out of a point (see~\cite[section~1.7]{RSSen} for a precise definition). A section of a branch out of a point~$x$ is a connected open 
subset~$U$ that lies in the prescribed direction and such that~$x$ belongs to the closure~$\bar{U}$ of~$U$ but not to~$U$ itself.

\begin{theorem}[(A.~Ducros)]\label{thm:bonvois}
Let~$x$ be a point of~$X$ of type~2 and~$b$ a branch out of~$x$. There exists an affinoid neighbourhood~$Y$ of~$x$ in~$X$, an affinoid domain~$W$ of~$\P^{1,\textrm{an}}_{K}$ and a finite \'etale map \mbox{$\psi : Y \to W$} such that
\begin{enumerate}[\it i)]
\item $\psi^{-1}(\psi(x))=\{x\}$;
\item almost every connected component of $Y\setminus\{x\}$ is an open unit disc with boundary~$\{x\}$;
\item almost every connected component of $W\setminus\{\psi(x)\}$ is an open unit disc with boundary~$\{\psi(x)\}$;
\item for almost every connected component~$V$ of $Y\setminus\{x\}$, the induced morphism $V \to \psi(V)$ is an isomorphism;
\item the map~$\psi$ induces an isomorphism between a section of~$b$ and a section of~$\psi(b)$. 
\end{enumerate}
\end{theorem}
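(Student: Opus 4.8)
The plan is to produce $\psi$ as the analytic morphism induced by a suitable function on a small affinoid neighbourhood of~$x$, and to control it through the reduction at the type-$2$ point. Recall that to a point $x$ of type~$2$ one attaches its residue curve $C_{x}$, a smooth projective curve over the residue field $\widetilde{K}$ (possibly after replacing $K$ by a finite separable extension and descending at the end) whose function field is $\widetilde{\Hc(x)}$, together with a map from the set of branches out of~$x$ onto the set of closed points of~$C_{x}$; write $P_{b}$ for the point attached to the prescribed branch~$b$. Since $\widetilde{\Hc(x)}$ is a function field of one variable over~$\widetilde{K}$, it is separably generated, and one may choose a non-constant $h \in \widetilde{\Hc(x)}$, that is, a finite morphism $\phi : C_{x} \to \P^{1}_{\widetilde{K}}$, which is separable (hence unramified outside a finite set of closed points of~$C_{x}$), which is unramified at~$P_{b}$, and with $h(P_{b}) \ne \infty$; the requirement that $\phi$ be unramified at~$P_{b}$ is what will allow us later to keep a section of~$b$ inside the étale locus. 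That such a choice of~$h$ is possible is one of the technical inputs of~\cite{RSSen}.

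Next I would lift~$h$: there is an affinoid neighbourhood $Y_{0}$ of~$x$ in~$X$ and a function $f \in \Os(Y_{0})$ with $|f(x)| = 1$ whose reduction at~$x$ equals~$h$. Shrinking, the function~$f$ defines a finite morphism onto an affinoid domain of~$\E{1}{K}$, and composing with the inclusion $\E{1}{K} \hookrightarrow \P^{1,\an}_{K}$ we obtain $\psi : Y \to W := \psi(Y)$, where $W$ is an affinoid domain of $\P^{1,\an}_{K}$ containing the type-$2$ point $\psi(x)$ whose residue curve is $\P^{1}_{\widetilde{K}}$ and whose reduction recovers~$\phi$. The locus where~$\psi$ fails to be étale is a finite set of points of~$Y$ lying over the ramification points of~$\phi$; it does not contain~$x$ (the generic point of~$C_{x}$, over which~$\phi$ is separable) nor any point of a sufficiently small section of~$b$ (since $\phi$ is unramified at~$P_{b}$). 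The fibre $\psi^{-1}(\psi(x))$ is finite and its points other than~$x$ stay away from~$x$. Removing these finitely many bad points and passing to a connected affinoid neighbourhood~$Y$ of~$x$ still containing a section of~$b$, and replacing~$W$ by $\psi(Y)$, we may assume that $\psi : Y \to W$ is finite étale with $\psi^{-1}(\psi(x)) = \{x\}$; this is~\textit{i)}.

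The remaining assertions are then essentially formal. At a type-$2$ point of a quasi-smooth curve, the branches correspond to the closed points of the residue curve, and all but finitely many of the associated connected components of the complement are open unit discs admitting that point as unique boundary point; this gives~\textit{ii)} for~$Y$ and~\textit{iii)} for~$W$ (for~$W$, every component of $W \setminus \{\psi(x)\}$ is in fact such a disc). Because $\psi$ is finite étale with $\psi^{-1}(\psi(x)) = \{x\}$, it maps $Y \setminus \{x\}$ onto $W \setminus \{\psi(x)\}$, sending connected components to connected components; only finitely many components of~$Y$ can fail to lie over a disc component of~$W$, so for almost every component~$V$ both $V$ and $\psi(V)$ are open unit discs and $V \to \psi(V)$ is a finite étale morphism of discs, hence an isomorphism because a connected étale cover of a disc is trivial. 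This is~\textit{iv)}. For~\textit{v)}, a small section of~$b$ is a disc mapping onto a section of $\psi(b)$, which is again a disc, by a finite étale morphism; as above this morphism is an isomorphism.

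The main obstacle is the construction carried out in the first two paragraphs: choosing a separable rational function on the residue curve that is unramified at the prescribed point~$P_{b}$ when $\widetilde{K}$ has positive characteristic, lifting it to an analytic function on a neighbourhood of~$x$, and verifying — using the compatibility of the analytic map with the reduction at type-$2$ points — that for a small enough affinoid~$Y$ the induced map $Y \to W$ is genuinely finite and étale, with fibre~$\{x\}$ over~$\psi(x)$ and with~$b$ in its étale locus. The descent from a finite separable extension $K'/K$, if one is needed in order to have the residue curve and its points at one's disposal, is routine and may be performed at the very end; see the proof of théorème~3.4.1 of~\cite{RSSen} and \cite[theorem~3.2.1]{finiteness2}.
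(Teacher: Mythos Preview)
The paper does not give its own proof of this theorem; it is quoted with attribution to Ducros and a pointer to the proof of th\'eor\`eme~3.4.1 in~\cite{RSSen} (and to~\cite[theorem~3.2.1]{finiteness2}). Your proposal follows exactly Ducros's strategy: pass to the residue curve~$C_{x}$ at the type-$2$ point, pick a separable morphism $\phi : C_{x} \to \P^1_{\widetilde K}$ unramified at the closed point~$P_{b}$ attached to the prescribed branch, lift it to an analytic function on an affinoid neighbourhood of~$x$, and shrink. The architecture is correct and is the standard one.

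There is, however, a genuine slip in your argument for~\textit{v)}. You write that ``a small section of~$b$ is a disc'', but this is false in general: the branch~$b$ is prescribed and may well lie along the skeleton, in which case every section of~$b$ is an annulus and the principle ``a connected \'etale cover of a disc is trivial'' does not apply. (Indeed, in the very next result, corollary~\ref{cor:Rdbranch}, the section of~$b$ is taken to be a semi-open annulus.) The correct reason~\textit{v)} holds is a degree count. After the base change you already allow, the residue field is algebraically closed, so the hypothesis that~$\phi$ be unramified at~$P_{b}$ forces the local degree of~$\phi$ at~$P_{b}$ to equal~$1$. Consequently, over a sufficiently small section of~$\psi(b)$, exactly one sheet of the finite \'etale cover~$\psi$ lies in the direction~$b$; the restriction of~$\psi$ to a small enough section of~$b$ is therefore finite \'etale of degree~$1$, hence an isomorphism onto a section of~$\psi(b)$. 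This is precisely why one insists on unramifiedness at~$P_{b}$, not merely on separability of~$\phi$; your argument for~\textit{v)} never uses that hypothesis, which is a sign that something is missing.
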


\begin{corollary}\label{cor:Rdbranch}
Let~$x$ be a point of~$S$ of type~2 and~$b$ a branch out of~$x$. There exists an affinoid neighbourhood~$Y$ of~$x$ in~$X$, a derivation~$d$ on~$\Os(Y)$ and a constant $R>0$ such that
\begin{enumerate}[\it i)]
\item the sheaf~$\Fs$ is free on~$Y$;
\item almost every connected component of $Y\setminus\{x\}$ is an open unit disc on which 
$\Rc_{S}(\cdot,(\Fs,\nabla)) = \min(\Rc^d(\cdot,(\Fs,\nabla))/R, 1) = \min(\Rc^d(\cdot,(\Fs,\nabla))^*/R, 1)$;
\item there exists a section~$U$ of~$b$ which is isomorphic to a semi-open annulus with boundary~$x$ and on which $\Rc_{\emptyset}(\cdot,(\Fs,\nabla)_{|U}) = \min(\Rc^d(\cdot,(\Fs,\nabla))/R, 1) = \min(\Rc^d(\cdot,(\Fs,\nabla))^*/R, 1)$.
\end{enumerate}
\end{corollary}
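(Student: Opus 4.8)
The plan is to deduce Corollary~\ref{cor:Rdbranch} from A.~Ducros's structure result (Theorem~\ref{thm:bonvois}) by transporting the radius of convergence along the finite \'etale map~$\psi$ and choosing a good derivation downstairs. First I would apply Theorem~\ref{thm:bonvois} to the point~$x$ of~$S$ and the branch~$b$, obtaining an affinoid neighbourhood~$Y$ of~$x$, an affinoid domain~$W$ of~$\P^{1,\mathrm{an}}_{K}$, and a finite \'etale map $\psi : Y \to W$ with the five listed properties. Shrinking~$Y$ if necessary, I may also assume that~$\Fs$ is free on~$Y$, which gives property~\textit{i)}. Since~$W$ is an affinoid domain of the line (not the whole line, after possibly shrinking), it carries a global coordinate~$t$, and I set $d_{W} = \mathrm{d}/\mathrm{d}t$; because~$\psi$ is \'etale, $d_{W}$ pulls back to a derivation~$d$ on~$\Os(Y)$. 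The constant~$R$ will be the radius $\rho_{S_{0}}(x)$ (or a comparable quantity) measuring, in the coordinate~$t$, the largest disc of~$W$ around~$\psi(x)$, together with the renormalisation relating embedded radii to~$\Rc_{\emptyset}$; in fact on the open unit discs appearing in~\textit{ii)} and on the annulus~$U$ of~\textit{iii)} the map~$\psi$ is an isomorphism onto its image, so~$\rho$ is a single constant there.

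The key computation is then the following: on each connected component~$V$ of $Y\setminus\{x\}$ for which $V \to \psi(V)$ is an isomorphism (almost all of them, by~\textit{iv)}), and on the section~$U$ of~$b$ from~\textit{v)}, the pulled-back connection $(\Fs,\nabla)_{|V}$ is identified with a differential module over $(\Os(\psi(V)), d_{W})$. By the formula of section~2.5 (formulas~(\ref{eq:formularadius1}), (\ref{eq:RdT0}) and the discussion reducing the general case to the empty-triangulation case), the radius $\Rc_{\emptyset}(\cdot,(\Fs,\nabla)_{|V})$ computed on~$V$ with the coordinate inherited from~$t$ is exactly $\min(\Rc^{d}(\cdot,(\Fs,\nabla))/R, 1)$, where~$\Rc^{d}$ is the naive radius of~(\ref{eq:radiusd}) for the derivation~$d$ and~$R$ the appropriate constant. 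Next I have to compare $\Rc_{\emptyset}(\cdot,(\Fs,\nabla)_{|V})$ with the intrinsic $\Rc_{S}(\cdot,(\Fs,\nabla))$: for the components~$V$ that are open unit discs with boundary~$\{x\}$, the component of $X_{L}\setminus \Gamma_{S_{L}}$ through a rational point of~$V$ is~$V$ itself (because~$x \in S$), so the disc $D(\ti x, S_{L})$ coincides with (the relevant unit disc inside)~$V$ and $\Rc_{S} = \Rc_{\emptyset}$ on~$V$; this gives~\textit{ii)}. For~\textit{iii)}, the section~$U$ is a semi-open annulus with boundary~$x$, and I simply record the same identity for $\Rc_{\emptyset}(\cdot,(\Fs,\nabla)_{|U})$ on~$U$ with its natural coordinate. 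Finally, the equality $\min(\Rc^{d}/R,1) = \min((\Rc^{d})^{*}/R,1)$ in both~\textit{ii)} and~\textit{iii)} follows from Corollary~\ref{cor:Rdsh} (or rather its proof) and Theorem~\ref{thm:Cauchypadique}: the lower semicontinuous regularisation can only differ from the function itself at rational points of type~1, and there the radius is locally constant, so after taking the minimum with~$1$ the two expressions agree everywhere.

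The main obstacle I expect is bookkeeping the various normalisations and making sure a \emph{single} derivation~$d$ and a \emph{single} constant~$R$ work simultaneously for almost all components~$V$ and for the chosen section~$U$. Because~$\psi$ is finite \'etale and the excluded components in Theorem~\ref{thm:bonvois} are finite in number, there is one derivation~$d$ on all of~$\Os(Y)$, but the coordinate~$t$ on~$W$ restricts to different-looking coordinates on the various~$\psi(V)$; the point is that on each unit disc and on the annulus the restriction of~$\psi$ is an \emph{isomorphism onto its image}, so the modulus/radius bookkeeping collapses to one constant~$R$ depending only on how $\psi(x)$ and its neighbouring discs sit inside~$W$ in the coordinate~$t$. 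A secondary subtlety is that~$W$ might a priori be all of~$\P^{1,\mathrm{an}}_{K}$ or contain the point at infinity; I would handle this by shrinking~$W$ to a strict affinoid subdomain containing~$\psi(x)$ and avoiding~$\infty$ before introducing the coordinate, which is harmless since we only care about a neighbourhood of~$x$. Once these normalisations are fixed, the three asserted identities are immediate consequences of the coordinate formulas of section~2.5 together with Corollaries~\ref{cor:Rdsh} and~\ref{cor:logRT0sh} and Theorem~\ref{thm:Cauchypadique}.
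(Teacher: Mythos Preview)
Your proposal is correct and follows essentially the same route as the paper: apply Theorem~\ref{thm:bonvois}, pull back the derivation $\mathrm{d}/\mathrm{d}t$ through the \'etale map~$\psi$, invoke formula~(\ref{eq:formularadius1}) on the components where~$\psi$ is an isomorphism, and handle the lower semicontinuous regularisation via Proposition~\ref{prop:propsh}~\textit{ii)} together with Theorem~\ref{thm:Cauchypadique}. The paper is slightly more concise and pins down the constant explicitly as $R = |\psi(x)|$ (the radius, in the coordinate~$t$, of the open discs appearing as components of $W\setminus\{\psi(x)\}$), which resolves the bookkeeping you flag as an obstacle; your discussion of shrinking~$W$ away from~$\infty$ and of a single~$d$ working globally is sound but not something the paper spells out.
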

\begin{proof}
Consider a morphism~$\psi$ as in the previous theorem. Denote~$U$ the section of~$b$ that is mentioned there. Up to shrinking~$Y$ and~$W$, we may assume that~$U$ is a semi-open annulus with boundary~$x$ and that it is the connected component of~$Y\setminus\{x\}$ that lies in the direction associated to~$b$. We may also assume that~$\Fs$ is free on~$Y$.

Let~$t$ denote a coordinate on $W \subset \P^{1,\textrm{an}}_{K}$ and consider the derivation $\mathrm{d}/\mathrm{d}t : \Omega^1_{W} \to \Os_{W}$. Since~$\psi$ is \'etale, it induces a derivation
\[d : \Omega^1_{Y} \simeq \psi^*\Omega^1_{W} \to \psi^*\Os_{W} \to \Os_{Y}\]
on~$Y$. By formula~(\ref{eq:formularadius1}), it satisfies the properties of the statement with $R=|\psi(x)|$, except for the last equalities in the last two items, which is a lower semicontinuity issue.

By proposition~\ref{prop:propsh}, \textit{ii}), the map $\min(\Rc^d(\cdot,(\Fs,\nabla))/R, 1)$ may only fail to be lower semicontinuous at rational points. If it is equal to $\Rc_{S}(\cdot,(\Fs,\nabla))$ or $\Rc_{\emptyset}(\cdot,(\Fs,\nabla)_{|U})$ in the neighbourhood of such a point, by theorem~\ref{thm:Cauchypadique}, there exists a possibly smaller neighbourhood on which it is actually constant, hence lower semicontinuous.
\end{proof}

To be able to use the last point of the previous result, we will need to be able to compare the radius $\Rc_{S}(\cdot,(\Fs,\nabla))$ restricted to some annulus~$C$ in~$X$ to the radius of the restriction $\Rc_{\emptyset}(\cdot,(\Fs,\nabla)_{|C})$. This relies on formula~(\ref{eq:rhoS'S}). 

\begin{lemma}\label{lem:restriction}
Let~$x$ be a point of~$S$. Let~$C$ be an open disc or annulus inside~$X$ such that $\bar{C}\cap S = \{x\}$, where~$\bar{C}$ denotes the closure of~$C$ in~$X$. 
\begin{enumerate}[a)]
\item Assume that~$C$ is an open disc. Then, for any $y\in C$, we have \[\Rc_{\emptyset}(y,(\Fs,\nabla)_{|C}) = \Rc_{S}(y,(\Fs,\nabla)).\]
\item Assume that~$C$ is an open annulus such that $\Gamma_{S}$ meets the skeleton of~$C$. Then, for any $y\in C$, we have \[\Rc_{\emptyset}(y,(\Fs,\nabla)_{|C}) = \Rc_{S}(y,(\Fs,\nabla)).\]
\item Assume that~$C$ is an open annulus such that $\Gamma_{S}$ does not meet the skeleton of~$C$. Identify~$C$ with an annulus $C^-(0;R_{1},R_{2})$, with coordinate~$t$, in such a way that $\lim_{R\to R_{2}^-} \eta_{R} = x$. Then, for any $y\in C$, we have \[\Rc_{\emptyset}(y,(\Fs,\nabla)_{|C}) = \min\left(\frac{R_{2}}{|t(y)|}\, \Rc_{S}(y,(\Fs,\nabla)), 1\right).\]
\end{enumerate}
\end{lemma}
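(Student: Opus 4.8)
The plan is to reduce everything to formula~(\ref{eq:rhoS'S}), which relates the radius with respect to a triangulation~$S'$ containing~$S$ to the radius with respect to~$S$ via the modulus function~$\rho_{S',S}$. Here we want to compare~$\Rc_S$ on~$C$ to the intrinsic radius~$\Rc_\emptyset$ of the restricted connection~$(\Fs,\nabla)_{|C}$ on~$C$, regarded as a curve endowed with the empty weak triangulation. The key geometric observation is that, because $\bar C \cap S = \{x\}$, the curve~$C$ together with the empty triangulation behaves almost like a connected component of~$X \setminus S$. Concretely, for any point $y \in C$, the disc $D(\ti y, S_L)$ computed inside~$X_L$ and the disc $D(\ti y, \emptyset_L)$ computed inside~$C_L$ differ only in how the skeleton of~$C$ (when~$C$ is an annulus) relates to~$\Gamma_S$; in all other respects they coincide. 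So the strategy is: first identify $D(\ti y, \emptyset_L)$ and $D(\ti y, S_L)$ in each of the three cases, then translate the resulting comparison of radii through definition~\ref{def:radius}.

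First I would treat case~a). If~$C$ is an open disc with $\bar C \cap S = \{x\}$, then~$C$ is a connected component of $X \setminus S$, hence $D(\ti x, S_L)$ contains~$C_L$ and in fact $D(\ti y, S_L) = C_L$ for every $y \in C$. Since~$C$ endowed with the empty triangulation has $\Gamma_\emptyset = \emptyset$, we also have $D(\ti y, \emptyset_L) = C_L$. Both radii are then computed by pulling back $(\Fs,\nabla)$ to the same disc $C_L$ and measuring the largest sub-disc of triviality relative to the same ambient radius; this gives the equality. For case~b), $C$ is an open annulus whose skeleton meets~$\Gamma_S$. Then for any $y \in C$, the connected component of $C_L \setminus (\text{skeleton of } C_L)$ containing~$\ti y$ is an open disc which is disjoint from~$S_L$ and also disjoint from~$\Gamma_{S_L}$ (the latter because the only part of~$\Gamma_S$ inside~$C$ is along the skeleton); hence $D(\ti y, S_L)$ equals this open disc, which is exactly $D(\ti y, \emptyset_L)$ computed in~$C_L$ with the empty triangulation. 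Again the two radii are computed on the same disc with the same normalization, so they agree. In both cases the point is that $\rho_{S',S}$-type corrections are trivial because the relevant ``biggest disc avoiding~$S$'' already lives inside~$C$ and does not see more of~$\Gamma_S$ than the skeleton of~$C$ itself.

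The main obstacle is case~c), where~$C = C^-(0; R_1, R_2)$ is an open annulus whose skeleton does \emph{not} meet~$\Gamma_S$, with the orientation chosen so that $\eta_R \to x$ as $R \to R_2^-$. Here $C$ is \emph{not} a connected component of $X \setminus S$: rather, $C$ sits inside such a component~$V$, and~$\Gamma_S \cap V$ passes through~$x$ but then leaves~$C$ towards~$x$ (i.e.\ the skeleton of~$V$ inside~$C$ is the half-open segment $(\eta_{R_1}, x)$ or similar, but crucially $x \notin C$). For a point $y \in C$ with $|t(y)| = r$, the disc $D(\ti y, S_L)$ computed inside~$X_L$ has, measured with the coordinate~$t$, radius~$r$ (it is the open disc $D_L^-(\ti y, r)$ sitting off the skeleton, whose boundary point is $\eta_r$ which lies on the skeleton of~$C$), while $D(\ti y, \emptyset_L)$ computed inside $C_L$ with the empty triangulation is the \emph{larger} disc $D_L^-(\ti y, r)$ as well but now the ambient open disc one compares against is all of... — no: in $C_L$ with empty triangulation, $\Gamma_\emptyset$ is the skeleton of~$C_L$, so $D(\ti y, \emptyset_L)$ is again $D_L^-(\ti y, r)$. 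The difference is purely in the normalization: definition~\ref{def:radius} divides by the radius~$R$ of the ambient disc, and in $\Rc_S$ one uses $R = r$ (since $D(\ti y, S_L) \simeq D_L^-(0, r)$), whereas — wait, this needs care. Let me instead argue via formula~(\ref{eq:rhoS'S}): enlarge~$S$ to $S' = S \cup \{x\}$; then $C$ becomes (a component related to) a piece of $X \setminus S'$, $D(\ti y, S'_L)$ is $D_L^-(\ti y, r)$ as a sub-disc of $D(\ti y, S_L)$, and $\rho_{S',S}(y) = r/R_2$ where $R_2$ is read off from the fact that the boundary of the component of $X_L \setminus \Gamma_{S_L}$ containing~$\ti y$ approaches~$x = \lim \eta_R$. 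Then (\ref{eq:rhoS'S}) gives $\Rc_{S'}(y) = \min(\Rc_S(y)/\rho_{S',S}(y), 1) = \min((R_2/|t(y)|)\,\Rc_S(y), 1)$, and one checks directly from the definitions that $\Rc_{S'}(y, (\Fs,\nabla)) = \Rc_\emptyset(y, (\Fs,\nabla)_{|C})$ because with the triangulation~$S'$ the relevant disc $D(\ti y, S'_L)$ coincides with the one computed in~$C_L$ with the empty triangulation. Assembling these identifications yields the stated formula. The delicate points, which I would check carefully rather than hand-wave, are: (i) that the normalization radius in $\Rc_\emptyset(\cdot, (\Fs,\nabla)_{|C})$ really is $|t(y)|$ and not~$R_2$, coming from $\Gamma_\emptyset$ being the skeleton of~$C$; and (ii) the precise value $\rho_{S',S}(y) = |t(y)|/R_2$, which uses the orientation hypothesis $\lim_{R \to R_2^-}\eta_R = x$ to pin down which end of the annulus the skeleton of~$V$ exits through.
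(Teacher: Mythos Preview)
Your overall approach matches the paper's: the lemma is stated without a detailed proof, the text simply noting that it ``relies on formula~(\ref{eq:rhoS'S}),'' which is precisely the tool you reach for. Cases~a) and~b) are fine: you correctly observe that $D(\ti y,S_L)$ computed in~$X_L$ coincides with $D(\ti y,\emptyset_L)$ computed in~$C_L$, so the two normalized radii agree.

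Case~c), however, contains a genuine confusion about the geometry. The component~$V$ of $X\setminus S$ containing~$C$ must be a \emph{disc}: if~$V$ were an annulus, its skeleton would lie in~$\Gamma_S$, and since~$C$ is a sub-annulus of~$V$ with one boundary point $x\in\partial V$, the skeleton of~$C$ would be a subsegment of the skeleton of~$V$, putting us in case~b). Hence $\Gamma_S\cap V=\emptyset$, there is no ``skeleton of~$V$'' to speak of, and $D(\ti y,S_L)$ is all of (the relevant component of)~$V_L$, not the small disc $D_L^-(\ti y,r)$ you first write down. This is why your direct comparison of normalizations stalls.

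Your recovery via~(\ref{eq:rhoS'S}) is the right idea, but the enlargement $S'=S\cup\{x\}$ is vacuous since $x\in S$ already. What you need is $S'=S\cup\{x'\}$, where $x'=\lim_{R\to R_1^+}\eta_R$ is the \emph{other} boundary point of~$C$, lying inside~$V$. With this~$S'$, $C$ becomes an annulus component of $X\setminus S'$, so its skeleton lies in~$\Gamma_{S'}$; then $D(\ti y,S'_L)$ is the disc $D_L^-(\ti y,|t(y)|)$ hanging off that skeleton, and it coincides with $D(\ti y,\emptyset_L)$ in~$C_L$, giving $\Rc_{S'}(y,(\Fs,\nabla))=\Rc_\emptyset(y,(\Fs,\nabla)_{|C})$. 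The semi-open annulus $D(\ti y,S_L)\setminus D(\ti y,S'_L)=V_L\setminus D_L^-(\ti y,|t(y)|)$ has as skeleton the segment $[\eta_{|t(y)|},x]$ inside~$\bar C$, of length $\log(R_2/|t(y)|)$, so indeed $\rho_{S',S}(y)=|t(y)|/R_2$. With these corrections your argument is complete and agrees with the route the paper indicates.
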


The following result will now allow us to go from one radius to the other to prove the properties we want. It is based on the fact that radii cannot exceed~1. 

\begin{lemma}\label{lem:change}
Let~$x$ be a point of~$S$. Let~$C$ be an open disc or annulus inside~$X$ such that $\bar{C}\cap S = \{x\}$.
\begin{enumerate}[a)]
\item Let~$y \in C$. The restriction of $\Rc_{S}(\cdot,(\Fs,\nabla))$ to~$[y,x)$ admits a limit at~$x$ if, and only if, the restriction of $\Rc_{\emptyset}(\cdot,(\Fs,\nabla)_{|C})$ to~$[y,x)$ admits a limit at~$x$. Moreover, in this case, the limits coincide.
\item Let~$\Gamma$ be a subgraph of~$C$. If~$C$ is an annulus and not a disc, assume that~$\Gamma$ contains its skeleton. The restriction of $\Rc_{S}(\cdot,(\Fs,\nabla))$ to~$C$ is locally constant outside~$\Gamma$ if, and only if, the restriction of $\Rc_{\emptyset}(\cdot,(\Fs,\nabla)_{|C})$ to~$C$ is locally constant outside~$\Gamma$. 
\end{enumerate}
\end{lemma}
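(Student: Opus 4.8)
The plan is to deduce both statements from Lemma~\ref{lem:restriction}. As throughout this section, we may assume that~$K$ is algebraically closed and maximally complete, so that~$C$, the connected component~$V$ of~$X\setminus S$ containing~$C$, and the connected components of~$C$ minus its skeleton are genuine discs and annuli. If the pair~$(C,x)$ falls under case~a) or~b) of Lemma~\ref{lem:restriction}, then $\Rc_{\emptyset}(\cdot,(\Fs,\nabla)_{|C})$ and $\Rc_{S}(\cdot,(\Fs,\nabla))$ coincide on~$C$ and both assertions are immediate; so assume we are in case~c): $C$~is an open annulus, identified with $C^-(0;R_{1},R_{2})$ with coordinate~$t$ so that $\eta_{R}\to x$ as $R\to R_{2}^-$, and $\Rc_{\emptyset}(y,(\Fs,\nabla)_{|C})=\min\big(\tfrac{R_{2}}{|t(y)|}\Rc_{S}(y,(\Fs,\nabla)),1\big)$ for $y\in C$. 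Because radii are $\le 1$ and $|t|<R_{2}$ on~$C$, this rewrites as the identity
\[\frac{|t(y)|}{R_{2}}\,\Rc_{\emptyset}(y,(\Fs,\nabla)_{|C}) = \min\!\left(\Rc_{S}(y,(\Fs,\nabla)),\,\frac{|t(y)|}{R_{2}}\right)\qquad(y\in C),\]
which is what I will use. I also record that~$V$ is a disc with boundary~$x$ (if it were an annulus, the condition $x\in\bar C$ would force~$C$ to contain part of the skeleton of~$V$, hence of~$\Gamma_{S}$, contradicting case~c)), that $V=D^-(0,R_{2})$ in the coordinate~$t$, and that $D(\ti{z},S_{L})=V_{L}$ for every $z\in V$; in particular, by definition~\ref{def:radius}, $\Rc_{S}(z,(\Fs,\nabla))=\Rc'(z)/R_{2}$, where $\Rc'(z)$ denotes the radius of the largest open disc centred at~$\ti{z}$ inside~$V_{L}$ on which the pulled-back connection is trivial.

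For part~a): near~$x$ the segment~$[y,x)$ runs along the skeleton of~$C$ toward its $R_{2}$-end, so $|t|\to R_{2}$ and $|t|/R_{2}\to 1$ while staying $<1$. By the identity, $\min(\Rc_{S},|t|/R_{2})$ has, along~$[y,x)$, the same limit behaviour as $\Rc_{\emptyset}(\cdot,(\Fs,\nabla)_{|C})$; and since $\Rc_{S}\le 1$ while $|t|/R_{2}\to 1$, an elementary argument — splitting according to whether the limit is $<1$, in which case the minimum is eventually realised by~$\Rc_{S}$, or $=1$, in which case~$\Rc_{S}$ is squeezed between $\min(\Rc_{S},|t|/R_{2})$ and~$1$ — shows that $\Rc_{S}$ admits a limit at~$x$ exactly when $\Rc_{\emptyset}(\cdot,(\Fs,\nabla)_{|C})$ does, and that the limits agree.

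For part~b): the implication ``$\Rc_{S}$ locally constant outside~$\Gamma$ $\Rightarrow$ $\Rc_{\emptyset}(\cdot,(\Fs,\nabla)_{|C})$ locally constant outside~$\Gamma$'' is clear, since $|t|$ is locally constant off the skeleton of~$C$, hence off~$\Gamma$. For the converse, fix $y\notin\Gamma$; then~$y$ lies in a connected component~$D''$ of~$C$ minus its skeleton, an open disc on which $|t|$ equals a constant~$\rho\in(R_{1},R_{2})$. Near~$y$ the function $\min(\Rc_{S},|t|/R_{2})=(|t|/R_{2})\Rc_{\emptyset}(\cdot,(\Fs,\nabla)_{|C})$ is constant, say equal to $a\le\rho/R_{2}$. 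If $a<\rho/R_{2}$ the minimum is realised by~$\Rc_{S}$, which is thus constant near~$y$. It remains to treat the case $a=\rho/R_{2}$, equivalently $\Rc_{\emptyset}(\cdot,(\Fs,\nabla)_{|C})\equiv 1$ near~$y$; this is the crux. Here the disc used to compute $\Rc_{\emptyset}(z,(\Fs,\nabla)_{|C})$, namely the component of~$C_{L}$ minus its skeleton through~$\ti{z}$, is~$D''_{L}$ for any $z\in D''$, so $\Rc_{\emptyset}(z,(\Fs,\nabla)_{|C})=1$ amounts to $(\Fs,\nabla)$ being trivial on~$D''$, a condition that holds at one point of~$D''$ iff at all of them; so assume $(\Fs,\nabla)$ is trivial on~$D''$. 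For $z\in D''$ we then have $\Rc'(z)\in[\rho,R_{2}]$ and $\Rc_{S}(z,(\Fs,\nabla))=\Rc'(z)/R_{2}$. The key geometric point is that if $\Rc'(z)>\rho$ then the triviality disc around~$\ti{z}$, being a disc centred at~$\ti{z}$ of radius exceeding $|t(\ti{z})|=\rho$, equals $D^-(0,\Rc'(z))$, a disc that does not depend on the choice of $z\in D''$; any $\ti{z}'$ with $z'\in D''$ lies in it, so $(\Fs,\nabla)$ is trivial on a disc centred at~$\ti{z}'$ of radius~$\Rc'(z)$, whence $\Rc'(z')\ge\Rc'(z)$, and symmetrically equality. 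Consequently $\Rc'$, hence $\Rc_{S}(\cdot,(\Fs,\nabla))$, is constant on~$D''$, in particular locally constant at~$y$.

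The main obstacle is precisely this last case $\Rc_{\emptyset}(\cdot,(\Fs,\nabla)_{|C})\equiv 1$: the identity above then caps a radius at~$1$ and loses track of~$\Rc_{S}$, so one must return to the geometric meaning of triviality discs and exploit that a triviality disc around a point of~$D''$ which is larger than~$D''$ is automatically centred at~$0$, hence common to all points of~$D''$. Everything else is a formal manipulation of the displayed identity, using only that radii are bounded by~$1$ and that~$|t|$ is locally constant off the skeleton of~$C$.
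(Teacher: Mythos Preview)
Your proof is correct and complete. The paper itself does not include a proof of this lemma, offering only the one-line hint that it ``is based on the fact that radii cannot exceed~1''; your argument supplies exactly the details this hint points to, reducing to case~c) of lemma~\ref{lem:restriction} via the rewritten identity $\tfrac{|t|}{R_{2}}\Rc_{\emptyset}=\min(\Rc_{S},|t|/R_{2})$ and then handling the saturated case $\Rc_{\emptyset}\equiv 1$ by the geometric observation that a triviality disc of radius exceeding~$\rho$ about any point of~$D''$ must be centred at~$0$ and hence common to all of~$D''$.
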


Let us now go back to the results we want to prove. We are ready to adapt the proofs of lemma~\ref{lem:dirfinite} and proposition~\ref{prop:finiteopendisc} in the case of a general curve.

\begin{corollary}\label{cor:locfinite}
Let~$x$ be a point of~$S \cap \mathrm{Int}(X)$ of type~2 and let~$C$ be the connected component of~$X$ containing~$x$. The map $\Rc_{S}(\cdot,(\Fs,\nabla))$ is constant on almost every connected component of~$C\setminus\{x\}$. 
\end{corollary}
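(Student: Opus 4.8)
The plan is to reduce the statement to the already-established case of an open disc or annulus, treated in Lemma~\ref{lem:dirfinite} and Proposition~\ref{prop:finiteopendisc}, by using the local geometric analysis provided by Corollary~\ref{cor:Rdbranch}. First I would invoke Corollary~\ref{cor:Rdbranch}: since~$x$ is a point of~$S$ of type~2, for each branch~$b$ out of~$x$ there is an affinoid neighbourhood~$Y$ of~$x$, a derivation~$d$ on~$\Os(Y)$ and a constant $R>0$ such that $\Fs$ is free on~$Y$, almost every connected component of $Y\setminus\{x\}$ is an open unit disc on which $\Rc_{S}(\cdot,(\Fs,\nabla)) = \min(\Rc^d(\cdot,(\Fs,\nabla))^*/R,1)$, and there is a section~$U$ of~$b$, isomorphic to a semi-open annulus with boundary~$x$, on which the same equality holds for $\Rc_{\emptyset}(\cdot,(\Fs,\nabla)_{|U})$. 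Since $x\in \mathrm{Int}(X)$, I may further arrange (after possibly extending the base field as in the reductions, so $K$ is algebraically closed, maximally complete, and $X$ strictly $K$-affinoid) that~$Y$ is a neighbourhood of~$x$ inside~$C$ whose boundary is disjoint from~$x$; the connected components of $Y\setminus\{x\}$ that are not among the ``almost every'' open unit discs are finite in number, so it suffices to prove the claim for the discs in this family.

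The key point is then to run the Laplacian-counting argument of Lemma~\ref{lem:dirfinite}. Put $LR^d = \log(\Rc^d(\cdot,(\Fs,\nabla))^*)$; by Corollary~\ref{cor:Rdsh} this is super-harmonic on (a connected open neighbourhood of~$x$ inside)~$Y$, since on a neighbourhood of the type-2 point~$x$ it cannot be identically~$+\infty$ (its values are finite at~$x$ and at nearby type-2 points by Theorem~\ref{thm:Cauchypadique}-type boundedness, i.e. Corollary~\ref{cor:Rdsh} itself). On each open unit disc component~$V$ the map $\Rc_{S}(\cdot,(\Fs,\nabla))$ equals $\min(\exp(LR^d)/R,1)$; by Theorem~\ref{thm:Kedlaya} (applied through the étale identification $V\simeq\psi(V)$, or via Lemma~\ref{lem:restriction}) its restriction to the segment from the boundary point~$x_V=\,$(the point of~$V$ adjacent to~$x$) into~$V$ is piecewise log-linear, concave, with slopes whose nonzero values have absolute value bounded below by $1/\mathrm{rk}(\Fs)$; moreover the slope at~$x$ directed into~$V$ is non-positive for $\Rc_{S}$, hence the slope of~$LR^d$ at~$x$ in that direction is non-negative and lies in $\{0\}\cup[m,+\infty)$ with $m = 1/\mathrm{rk}(\Fs)$, possibly shifted by the bounded log-linear contribution of the change-of-triangulation factors — but on the open unit discs $\rho$ is constant, so no shift occurs. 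The two or fewer ``exceptional'' directions (towards the finitely many non-disc components and towards the section~$U$) play the role of $\vec v_1,\vec v_2$ in Lemma~\ref{lem:dirfinite}. Testing $\mathrm{dd}^c(LR^d)$ against the hat function~$g$ supported on the segments $[x,x_{\vec v}]$ as in the proof of Lemma~\ref{lem:dirfinite} gives $\langle \mathrm{dd}^c(LR^d),g\rangle \ge p_{\vec v_1}+p_{\vec v_2}+rm$, and non-positivity of $\mathrm{dd}^c$ on super-harmonic functions (Theorem~\ref{thm:Radon}) bounds the number~$r$ of disc directions with nonzero slope by $-(p_{\vec v_1}+p_{\vec v_2})/m$, a finite quantity. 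Finally, on each disc~$V$ with zero slope at~$x$, Lemma~\ref{lem:borddisque} (applied to the closed disc $\overline V$ with its smallest triangulation $\{x_V\}$, via the étale identification) shows that a vanishing last slope forces $\Rc_{S}(\cdot,(\Fs,\nabla))$ to be constant on the open disc~$V$.

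The main obstacle I anticipate is technical rather than conceptual: one has to be careful that the slope at~$x$ in the direction of an open unit disc component~$V$ really does correspond, after the étale identification $\psi: Y\to W$ and the normalisation of distances by the canonical gauge~$\ell$, to the slope of $\log\Rc^{\mathrm{emb}}$ on the corresponding interval in~$W$, so that the uniform lower bound $m = 1/\mathrm{rk}(\Fs)$ and the sign information from Lemma~\ref{lem:borddisque} genuinely transfer. Because $\psi$ restricts to an isomorphism $V\xrightarrow{\sim}\psi(V)$ for almost every~$V$ (Theorem~\ref{thm:bonvois}~\textit{iv)}), and because moduli of annuli — hence the gauge — are preserved under isomorphism, this transfer is legitimate, but it is the place where the hypothesis $x\in\mathrm{Int}(X)$ (ensuring~$Y$ can be taken a genuine neighbourhood, with all branches out of~$x$ present) is silently used, matching the paper's remark that boundary points behave ``as if some directions were missing.'' A secondary bookkeeping point is the passage between $\Rc^d$ and its lower semicontinuous envelope $(\Rc^d)^*$, and between $\Rc_S$ and $\Rc_\emptyset$ on the section~$U$; both are handled exactly as in Corollary~\ref{cor:Rdbranch} and Lemma~\ref{lem:change}, the discrepancies occurring only at type-1 points where, by Theorem~\ref{thm:Cauchypadique}, the radius is locally constant anyway.
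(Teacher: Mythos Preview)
Your proposal is correct and follows essentially the same route as the paper: reduce via Corollary~\ref{cor:Rdbranch} to the super-harmonic function $\min(\log(\Rc^d)^*,\log R)$, use the lower bound on non-zero slopes from Theorem~\ref{thm:Kedlaya}, and conclude by super-harmonicity that only finitely many directions can carry a non-zero slope, after which Lemma~\ref{lem:borddisque} gives constancy on the corresponding discs. The only cosmetic difference is that the paper extracts the finiteness directly from the fact that $\mathrm{dd}^c$ of a super-harmonic function is a Radon measure (Theorem~\ref{thm:Radon}), whereas you re-run the explicit hat-function computation of Lemma~\ref{lem:dirfinite}; note however that Lemma~\ref{lem:dirfinite} is stated for an annulus with exactly two distinguished directions, so when you invoke it you should allow a finite set $\{\vec w_1,\ldots,\vec w_k\}$ of exceptional directions rather than just $\vec v_1,\vec v_2$, which changes nothing in the estimate.
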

\begin{proof}
Let~$U$ be an open neighbourhood of~$x$ in~$C$ on which~$\Fs$ and~$\Omega_{X}$ are free. Almost every connected component of~$C\setminus\{x\}$ is a disc that is entirely contained in~$U$. By lemma~\ref{lem:borddisque}, it is enough to prove that, for almost every direction out of~$x$, the slope of~$\Rc_{S}$ at~$x$ in the corresponding direction is~$0$. Thanks to corollary~\ref{cor:Rdbranch}, we may prove the result for the map $\min(\log(\Rc^d)^*,\log(R))$. 

By corollary~\ref{cor:Rdsh}, the map $\min(\log(\Rc^d)^*,\log(R))$ is super-harmonic on~$U$ and, by theorem~\ref{thm:Radon}, the current $\textrm{dd}^c \min(\log(\Rc^d)^*,\log(R))$ is a non-positive Radon measure. In particular, it defines a continuous linear form on the Fr\'echet space~$\Cs^0(U,\R)$: for any compact subset~$V$ of~$U$, there exists~$C_{V}\in \R$ such that, for $f\in\Cs^0(U,\R)$ supported on~$V$, we have 
\[\left|\int_{U} f\, \textrm{dd}^c \min(\log(\Rc^d)^*,\log(R))\right| \le C_{V} \sup_{z\in V}(|f(z)|).\] 
By theorem~\ref{thm:Kedlaya} and corollary~\ref{cor:Rdbranch}, the absolute values of the non-zero slopes of $\min(\log(\Rc^d)^*,\log(R))$ in almost every direction out of~$x$ are uniformly bounded below by a positive constant. 
Hence only a finite number of theses slopes may be different from~$0$. 
\end{proof}

\begin{corollary}\label{cor:finiteopendiscx}
Let~$x$ be a point of~$S \cap \mathrm{Int}(X)$, $b$~a branch out of~$x$ and $C$~an open annulus which is a section of~$b$. By proposition~\ref{prop:finiteopendisc}, there exists a locally finite graph~$\Gamma_{C}$ outside which the map $\Rc_{S}(\cdot,(\Fs,\nabla))_{|C}$ is constant. The graph~$\Gamma_{C}$ may actually be chosen finite in the neighbourhood of~$x$.
\end{corollary}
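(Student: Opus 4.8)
The plan is to combine the local structure near $x$ provided by Corollary~\ref{cor:Rdbranch} with the refined finiteness statement of Corollary~\ref{cor:finitegraph}. By Corollary~\ref{cor:Rdbranch}, applied to the branch $b$, there is an affinoid neighbourhood $Y$ of $x$, a derivation $d$ on $\Os(Y)$, a constant $R>0$, and a section $U$ of $b$ which is a semi-open annulus with boundary $x$, such that on $U$ we have $\Rc_{\emptyset}(\cdot,(\Fs,\nabla)_{|U}) = \min(\Rc^d(\cdot,(\Fs,\nabla))^*/R,1)$. Since $C$ is also a section of $b$, after shrinking we may assume $C \subset U$ (replacing $C$ by a sub-annulus abutting to $x$ changes $\Gamma_C$ only away from $x$, which is harmless), so that on $C$ the radius $\Rc_{\emptyset}(\cdot,(\Fs,\nabla)_{|C})$ equals $\min(\log(\Rc^d)^*-\log R,0)$ after taking logarithms. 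First I would use this identification to transfer the problem: by Lemma~\ref{lem:change}~b), the graph $\Gamma_C$ can be taken to be (the skeleton of $C$ together with) a graph outside which $\Rc_{\emptyset}(\cdot,(\Fs,\nabla)_{|C})$ is locally constant, so it suffices to prove the finiteness-near-$x$ statement for $\Rc_{\emptyset}(\cdot,(\Fs,\nabla)_{|C})$.

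Next I would invoke the concavity input at $x$. Identify $C$ with $C^-(0;R_1,R_2)$ so that $\eta_R \to x$ as $R \to R_2^-$, and let $J$ be the skeleton of $C$. The key point is that the restriction of $\Rc^{\mathrm{emb}}(\cdot,(\Fs,\nabla))$, hence of $\Rc_{\emptyset}(\cdot,(\Fs,\nabla)_{|C})$, to the segment $[\eta_{R'},\eta_{R_2})$ for some $R' \in (R_1,R_2)$ has only finitely many slopes in the neighbourhood of the end $\eta_{R_2}$ corresponding to $x$. This follows exactly as in the last paragraph of the proof of Proposition~\ref{prop:finiteopendisc}: on this segment the map is concave (Theorem~\ref{thm:Kedlaya} applied via the identification $\Rc^d/R$), piecewise $\log$-linear with slopes rational with bounded denominators, and by Proposition~\ref{prop:shcontinuous} (applicable because $\log(\Rc^d)^*$ is super-harmonic by Corollary~\ref{cor:Rdsh}, and $x$ is of type~2) it admits a finite directional derivative at $x$ in the direction of $b$; a concave piecewise-linear function on a half-open segment with finite slope at the (open) end and slopes in a discrete set can only change slope finitely often near that end.

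Then I would apply Corollary~\ref{cor:finitegraph}, second case (the annulus case): with this finiteness of slopes on $[\eta_{R'},\eta_{R_2})$ in hand, the graph $\Gamma$ outside which $\Rc_{\emptyset}(\cdot,(\Fs,\nabla)_{|C})$ is locally constant can be chosen finite in the direction of $\eta_{R_2}$, i.e.\ there is a finite subgraph $\bar\Gamma$ of $\overline{C^-(0;R',R_2)}$ with $\bar\Gamma \cap C^-(0;R',R_2) = \Gamma_C \cap C^-(0;R',R_2)$. Transporting back through Lemma~\ref{lem:change}~b), this says precisely that the graph governing $\Rc_S(\cdot,(\Fs,\nabla))_{|C}$ can be taken finite in a neighbourhood of $x$ inside $C$, which is the assertion. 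Finally I would remark that this is consistent on the overlap with the part of $\Gamma_C$ away from $x$ produced by Proposition~\ref{prop:finiteopendisc}, so the two can be glued into a single locally finite graph that is finite near $x$.

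The main obstacle I expect is the verification that the hypothesis of Corollary~\ref{cor:finitegraph} is genuinely met, i.e.\ the finiteness of the set of slopes of $\Rc^{\mathrm{emb}}$ on $[\eta_{R'},\eta_{R_2})$ near $x$. This rests on Proposition~\ref{prop:shcontinuous} guaranteeing a \emph{finite} directional derivative at the type-$2$ point $x$ in the direction $b$; one must be careful that the super-harmonicity is of $\log(\Rc^d)^*$ on the affinoid neighbourhood $Y$ (Corollary~\ref{cor:Rdsh}), and that $[\eta_{R'},\eta_{R_2})$ together with its endpoint $x$ lies in a subgraph of $Y$ to which the proposition applies — which it does, since $x \in Y$ and $C$ is a section of a branch out of $x$ inside $Y$ after shrinking. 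Once the finite directional derivative is secured, the discreteness of the slope set (denominators bounded by $\mathrm{rk}\,\Fs$) together with concavity makes the finiteness of break-points near $x$ automatic, and the rest is bookkeeping with Lemma~\ref{lem:change} and Corollary~\ref{cor:finitegraph}.
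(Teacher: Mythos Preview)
Your proposal is correct and follows essentially the same route as the paper: reduce via Corollary~\ref{cor:Rdbranch} and Lemma~\ref{lem:change} to the map $\min(\log(\Rc^d)^*,\log R)$, then use super-harmonicity (Corollary~\ref{cor:Rdsh}) together with Proposition~\ref{prop:shcontinuous} to obtain a finite directional derivative at~$x$, combine this with concavity and the discreteness of the slope set from Theorem~\ref{thm:Kedlaya} to deduce finitely many slopes on $[\eta_{R'},\eta_{R_2})$, and conclude by Corollary~\ref{cor:finitegraph}.

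One small omission: you tacitly assume $x$ is of type~2 (this is needed to invoke Corollary~\ref{cor:Rdbranch}), but a point of $S\cap\mathrm{Int}(X)$ may also be of type~3. The paper disposes of that case in one line at the start: by \cite[th\'eor\`eme~3.3.5]{RSSen} a type-3 point has a neighbourhood isomorphic to an open annulus, so the result there is immediate from Proposition~\ref{prop:finiteopendisc} (or directly from Theorem~\ref{thm:Kedlaya}). You should add this easy case before specialising to type~2.
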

\begin{proof}
 If~$x$ is a point of type~3 of the triangulation, by~\cite[th\'eor\`eme~3.3.5]{RSSen}, it has a neighbourhood~$U$ which is isomorphic to an open annulus, and the result follows.

We may assume that~$x$ is of type~2, that $C = C^-(0;R_{1},R_{2})$ and that \mbox{$x =\lim_{R' \to R_{2}^-} \eta_{R'}$}. By corollary~\ref{cor:Rdbranch} applied with the branch~$b$ and lemma~\ref{lem:change}, we may prove the result for the map $\min(\log(\Rc^d)^*,\log(R))$. Let $R' \in (R_{1},R_{2})$. By corollary~\ref{cor:finitegraph}, it is enough to prove that the restriction to $I = [\eta_{R'},\eta_{R_{2}})$ of this map has a finite number of slopes. By theorem~\ref{thm:Kedlaya}, it is concave, piecewise $\log$-linear and all its slopes are of the form~$m/i$ with $m\in\Z$ and $1\le i\le \mathrm{rk}(\Fs_{|C})$. By corollary~\ref{cor:Rdsh}, it is also super-harmonic and, by proposition~\ref{prop:shcontinuous}, it has a finite derivative at the point~$x$ in the direction of~$C$. By concavity, the slopes of the map on~$I$ are bounded below, hence there may only be a finite number of them.
\end{proof}

\begin{corollary}\label{cor:boundaryfreefinite}
Assume that~$X$ is boundary-free. Then, there exists a locally finite subgraph~$\Gamma$ of~$X$ outside which the map $\Rc_{S}(\cdot,(\Fs,\nabla))$ is locally constant.
\end{corollary}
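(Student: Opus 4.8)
The plan is to cover the curve~$X$ by pieces of two kinds: open discs/annuli that are connected components of~$X\setminus S$, and small affinoid neighbourhoods~$Y_x$ of the points~$x$ of~$S$, and to glue together the locally finite graphs produced on each piece. The weak triangulation~$S$ is locally finite in~$X$, and each connected component of~$X\setminus S$ is a virtual open disc or annulus; after the base-field extension already performed, these are genuine open discs or annuli. So the pieces form a locally finite closed covering of~$X$, and it suffices to exhibit, for each piece~$P$, a subgraph~$\Gamma_P$ of~$P$ that is finite in the neighbourhood of each point of~$S$ lying in~$\bar P$, outside which $\Rc_S(\cdot,(\Fs,\nabla))_{|P}$ is locally constant; the union $\Gamma = \bigcup_P \Gamma_P$ (together with $\Gamma_S$) is then locally finite in~$X$, and $\Rc_S(\cdot,(\Fs,\nabla))$ is locally constant outside it.

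First I would treat the components~$C$ of~$X\setminus S$. If~$C$ is a disc or an annulus whose skeleton does not meet~$S$, then $\bar C$ contains exactly one point~$x$ of~$S$, lying at one end of~$C$; by lemma~\ref{lem:restriction} (cases a) and c)) and lemma~\ref{lem:change}\,b), local constancy of $\Rc_S(\cdot,(\Fs,\nabla))_{|C}$ outside a graph is equivalent to that of $\Rc_\emptyset(\cdot,(\Fs,\nabla)_{|C})$, and proposition~\ref{prop:finiteopendisc} provides a locally finite~$\Gamma_C$ for the latter. The only thing to check is that~$\Gamma_C$ can be taken finite near the point~$x$ of~$S$ in~$\bar C$: this is exactly the content of corollary~\ref{cor:finiteopendiscx}, applied with the branch of~$C$ pointing towards~$x$ (note $x\in S\cap\mathrm{Int}(X)$ because~$X$ is boundary-free). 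If~$C$ is an annulus whose skeleton meets~$\Gamma_S$, then $\bar C$ may contain two points of~$S$, one at each end; applying corollary~\ref{cor:finiteopendiscx} at each end (and lemma~\ref{lem:change}\,b) with~$\Gamma$ containing the skeleton, via lemma~\ref{lem:restriction}\,b)) again gives a~$\Gamma_C$ finite near both.

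Next I would treat the finitely-many-in-each-compact points~$x$ of~$S$. Each such~$x$ is of type~2 or~3 and lies in $\mathrm{Int}(X)$. If~$x$ is of type~3 it has an annulus neighbourhood and is absorbed into the previous case. If~$x$ is of type~2, corollary~\ref{cor:locfinite} shows that $\Rc_S(\cdot,(\Fs,\nabla))$ is constant on all but finitely many connected components of (the component of~$X$ through~$x$)$\,\setminus\{x\}$; those finitely many exceptional components are among the pieces~$C$ already handled, and for them we use the fact (from corollary~\ref{cor:finiteopendiscx}) that~$\Gamma_C$ is finite near~$x$. Thus around~$x$ only finitely many branches carry non-constant behaviour, and each contributes finitely many segments near~$x$; adding~$x$ itself and these finitely many segment-germs to~$\Gamma$ keeps it locally finite.

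The main obstacle is the bookkeeping needed to see that the union $\Gamma = \Gamma_S \cup \bigcup_P \Gamma_P$ is genuinely locally finite in~$X$, rather than merely locally finite on each piece: a priori infinitely many discs branch off a single point~$x$ of~$S$, so one must use corollary~\ref{cor:locfinite} to discard all but finitely many of them, and corollary~\ref{cor:finiteopendiscx} to control each surviving one near~$x$. Once these two corollaries are in hand the argument is essentially a patching statement, and the delicate analytic input — boundedness of slopes, super-harmonicity, finite directional derivatives at type-2 points — has already been absorbed into them.
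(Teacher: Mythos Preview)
Your proposal is correct and follows essentially the same approach as the paper's proof: both use corollary~\ref{cor:locfinite} at type-2 points of~$S$ to reduce to finitely many non-constant branches, the annulus-neighbourhood fact at type-3 points, proposition~\ref{prop:finiteopendisc} on each remaining disc or annulus, and corollary~\ref{cor:finiteopendiscx} to ensure local finiteness of the resulting graph near points of~$S$. The only difference is organisational: the paper first applies corollary~\ref{cor:locfinite} to single out a locally finite family~$\Es$ of ``bad'' discs and annuli and then runs proposition~\ref{prop:finiteopendisc} on those, whereas you run proposition~\ref{prop:finiteopendisc} on every component of~$X\setminus S$ and invoke corollary~\ref{cor:locfinite} afterwards to discard the constant ones; the logical content is the same.
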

\begin{proof}
We may use corollary~\ref{cor:locfinite} for every point of type~2 of the triangulation~$S$. If~$x$ is a point of type~3 of the triangulation, by~\cite[th\'eor\`eme~3.3.5]{RSSen}, it has a neighbourhood~$U$ which is isomorphic to an open annulus. In this case, $U\setminus\{x\}$ is a disjoint union of two annuli.

We are left with a locally finite family~$\Es$ of open discs and annuli on which the function $\Rc_{S}(\cdot,(\Fs,\nabla))$ is not constant. On each member~$E$ of~$\Es$, we may apply proposition~\ref{prop:finiteopendisc} to find a locally finite graph~$\Gamma_{E}$ outside which the map $\Rc_{S}(\cdot,(\Fs,\nabla))_{|E}$ is constant. We check that $\Gamma = \Gamma_{S} \cup \bigcup_{E\in \Es} \Gamma_{E}$ is a graph and that the map $\Rc_{S}(\cdot,(\Fs,\nabla))$ is locally constant on its complement. 

To conclude, we need to prove that the graph~$\Gamma$ is indeed locally finite. It is enough to prove that, for each member~$E$ of~$\Es$ and each point $x\in \partial E$, the graph~$\Gamma_{E}$ is finite in the neighbourhood of~$E$. This is the content of corollary~\ref{cor:finiteopendiscx}.
\end{proof}

Let us finally deal with the continuity of the radius of convergence. Once again, it will follow from corollary~\ref{cor:Rdbranch} and the properties of super-harmonic functions.

\begin{corollary}\label{cor:continuousinterval}
Let~$x$ be a point of~$S \cap \mathrm{Int}(X)$ of type~2 and~$I$ be an interval with end-point~$x$. The restriction of $\Rc_{S}(\cdot,(\Fs,\nabla))$ to~$I$ is continuous at~$x$.
\end{corollary}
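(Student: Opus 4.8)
The plan is to reduce the continuity of $\Rc_{S}(\cdot,(\Fs,\nabla))$ at $x$ along $I$ to the continuity at $x$ of a super-harmonic function restricted to an appropriate segment, and then invoke Proposition \ref{prop:shcontinuous}. First I would note that the interval $I$ with end-point $x$ determines a branch $b$ out of $x$; a sufficiently small initial sub-interval $I' \subset I$ lies in a single connected component of $X \setminus \{x\}$, which we may take to be a section $U$ of $b$. There are two cases to handle, depending on the type of this component: either $U$ is an open (or semi-open) annulus, or $U$ is an open disc with boundary $x$. In the disc case, $\Rc_{S}(\cdot,(\Fs,\nabla))$ restricted to $[y,x)$ (for $y \in I'$) is, by Proposition \ref{prop:finiteopendisc} applied on the ambient disc component (or by Lemma \ref{lem:borddisque} together with Theorem \ref{thm:Kedlaya}), already known to be continuous and piecewise $\log$-linear, so the issue is genuinely the annulus case where $U$ abuts the triangulation point $x$.

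So assume $U$ is a section of $b$ that is a semi-open annulus with boundary $x$, as furnished by Corollary \ref{cor:Rdbranch}. By that corollary there is an affinoid neighbourhood $Y$ of $x$, a derivation $d$ on $\Os(Y)$ and a constant $R>0$ such that on $U$ we have
\[
\Rc_{\emptyset}(\cdot,(\Fs,\nabla)_{|U}) = \min\!\left(\frac{\Rc^d(\cdot,(\Fs,\nabla))^*}{R},\, 1\right),
\]
and by Corollary \ref{cor:Rdsh} the right-hand side, being the minimum of the super-harmonic function $\log(\Rc^d)^* - \log R$ and the constant $0$, is super-harmonic on (a neighbourhood of $x$ inside) $Y$; this uses Proposition \ref{prop:propsh}, \emph{i)}, once we know $\log(\Rc^d)^*$ is not identically $+\infty$, which follows since it agrees with the finite function $\log\Rc_{\emptyset}(\cdot,(\Fs,\nabla)_{|U})$ on $U$. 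Now $x$ is a point of type $2$, and the branch $b$ defines a direction $\vec v \in T_{x}\,\Gamma$ for any subgraph $\Gamma$ through $x$ that follows $I$; so by Proposition \ref{prop:shcontinuous} the directional derivative $\mathrm{d}_{\vec v}$ of this super-harmonic function exists and is finite at $x$, and in particular its restriction to the segment $I' \cup \{x\}$ is continuous at $x$. Hence $\lim_{z \to x,\, z \in I} \Rc_{\emptyset}(z,(\Fs,\nabla)_{|U})$ exists and equals the value of the super-harmonic function at $x$, which (again since the two functions agree on $U$, and by lower semicontinuity, the $\min$ being constant near $x$ by Theorem \ref{thm:Cauchypadique} if the limit point is rational, though here $x$ is of type $2$) equals $\Rc_{\emptyset}(x,(\Fs,\nabla)_{|Y})$ computed on the affinoid.

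Finally I would transfer this back to $\Rc_{S}$ using Lemma \ref{lem:change}, \emph{a)}: the restriction of $\Rc_{S}(\cdot,(\Fs,\nabla))$ to $[y,x)$ admits a limit at $x$ if and only if the restriction of $\Rc_{\emptyset}(\cdot,(\Fs,\nabla)_{|C})$ does, and the limits coincide, where $C$ is the open disc or annulus component with $\bar C \cap S = \{x\}$ containing the tail of $I$ (when $C$ is an annulus meeting $\Gamma_S$ in its skeleton, or a disc, the two radii are literally equal by Lemma \ref{lem:restriction}; in the remaining annulus case one applies the rescaling in Lemma \ref{lem:restriction}, \emph{c)}, which is continuous). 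Since $\Rc_{S}(\cdot,(\Fs,\nabla))(x)$ is by definition the value at $x$ and the limit along $I$ exists and equals it, continuity at $x$ along $I$ follows. The main obstacle, as usual in this circle of ideas, is the compatibility bookkeeping between the three flavours of radius ($\Rc_{S}$, $\Rc_{\emptyset}$ on the restriction, and $\Rc^d$) near the triangulation point and the lower-semicontinuity regularisation $(\cdot)^*$; but all the needed reconciliations are packaged in Corollaries \ref{cor:Rdbranch} and \ref{cor:Rdsh} and Lemmas \ref{lem:restriction}--\ref{lem:change}, so the argument is a matter of threading them together correctly.
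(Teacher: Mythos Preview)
Your argument establishes that the limit $\lim_{z \to x,\, z\in I} \Rc_{S}(z,(\Fs,\nabla))$ exists and equals $\min(\Rc^{d}(x,(\Fs,\nabla))^{*}/R,1)$, but it never shows that this value is $\Rc_{S}(x,(\Fs,\nabla))$. Corollary~\ref{cor:Rdbranch} only gives the identity $\Rc_{S}=\min(\Rc^{d*}/R,1)$ on the open components of $Y\setminus\{x\}$, \emph{not} at $x$ itself; in fact Remark~\ref{rem:Rdx} states explicitly that the equality at~$x$ is a \emph{consequence} of the proof of this corollary, so you cannot use it as an input. Your sentence ``Since $\Rc_{S}(\cdot,(\Fs,\nabla))(x)$ is by definition the value at~$x$ and the limit along~$I$ exists and equals it'' is exactly the unproven step. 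Lemma~\ref{lem:change}\,\textit{a)} tells you the limits of $\Rc_{S}$ and $\Rc_{\emptyset}$ along $[y,x)$ agree with each other, but says nothing about either of them agreeing with $\Rc_{S}(x)$. The same gap is present in your ``disc case'': Lemma~\ref{lem:borddisque} concerns $\Rc_{S_{0}}$ for the triangulation $\{x\}$ of a \emph{closed} disc, and you would still have to compare $\Rc_{S_{0}}(x)$ with $\Rc_{S}(x)$ for the ambient curve, which brings you back to the same difficulty.

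This is precisely why the paper's proof is more elaborate. To evaluate $\Rc_{S}(x)$ one must, by definition, pass to a rational point $\tilde x$ over~$x$ in some extension $L\supset\Hs(x)$ and look at the disc $D(\tilde x,S_{L})\subset \pi_{L}^{-1}(x)$. The paper therefore applies Corollary~\ref{cor:Rdbranch} a \emph{second} time, at the point $y\in S_{L}$ over~$x$ with a branch $b_{0}$ lying inside $\pi_{L}^{-1}(x)$; along $b_{0}$ the radius is constantly $\Rc_{S}(x)$ by Lemma~\ref{lem:basechange}, so one obtains $\min(\Rc^{d_{0}}(y)/R_{0},1)=\Rc_{S}(x)$. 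The two super-harmonic functions $\min(\Rc^{d_{0}}/R_{0},1)$ and $\min(\Rc^{d_{L}}/R,1)$ are then shown to agree at~$y$ by a genericity argument: on almost every disc component of $X_{L}\setminus\{y\}$ both equal $\Rc_{S_{L}}$, so their (continuous, by Proposition~\ref{prop:shcontinuous}) limits at~$y$ along such a component coincide. This extra base-change step is the missing idea in your proposal.
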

\begin{proof}
Let us first explain the strategy of the proof before going into the technical details, which we fear may appear messy. The idea is to use corollary~\ref{cor:Rdbranch} to express the radius~$\Rc_{S}$ as a radius of the form~$\Rc^{d}$, for some derivation~$d$, and then use the continuity property of super-harmonic functions (see proposition~\ref{prop:shcontinuous}). We will use corollary~\ref{cor:Rdbranch} twice: first, after a base change to a field~$L$ containing~$\Hs(x)$, with a branch inside~$\pi_{L}^{-1}(x)$ (we find a first limit $\ell_{1}$ at~$x$, which is $\Rc_{S}(x)$) and second with the branch associated to~$I$ (we find a second limit~$\ell_{2}$ at~$x$, which is the limit of~$\Rc_{S}$ at~$x$ along~$I$). We will conclude that the two limits are the same by a kind of genericity argument: on almost every branch out of~$x$, the two~$\Rc^{d}$'s are equal to~$\Rc_{S}$ (up to a multiplicative constant), hence they coincide and so do their limits at~$x$.  

\medskip

Let us now provide the details of the proof. We may assume that~$I$ is non-trivial. Let~$b$ be the branch out of~$x$ defined by~$I$. Let~$L$ be an algebraically closed complete valued extension of~$\Hs(x)$. Let~$y$ be a point of~$S_{L}$ over~$x$, $I_{0}$~be a non-trivial interval with end-point~$y$ which is contained in~$\pi^{-1}_{L}(x)$ and~$b_{0}$ be the branch out of~$y$ defined by~$I_{0}$. Let us use corollary~\ref{cor:Rdbranch} with~$y$ and~$b_{0}$ to find some~$Y_{0}$, $d_{0}$, $R_{0}$ and~$U_{0}$. 

By proposition~\ref{prop:shcontinuous}, $\Rc^{d_{0}}(z,\pi_{L}^*(\Fs,\nabla))$ tends to~$\Rc^{d_{0}}(y,\pi_{L}^*(\Fs,\nabla))$ when~$z$ tends to~$y$ along~$I_{0}$. By lemma~\ref{lem:basechange}, for any~$z$ in a section of~$c$, we have $\Rc_{S_{L}}(z,\pi_{L}^*(\Fs,\nabla)) = \Rc_{S}(x,(\Fs,\nabla))$. Hence, using lemma~\ref{lem:change}, case~a, we have
\begin{align*}
\min ( \Rc^{d_{0}}(y,\pi_{L}^*(\Fs,\nabla)), R_{0} ) &=  \lim_{z \xrightarrow[I_{0}]{} y} \min( \Rc^{d_{0}}(z,\pi_{L}^*(\Fs,\nabla)) , R_{0})\\
&= \lim_{z \xrightarrow[I_{0}]{} y} R_{0}\, \Rc_{\emptyset}(z,\pi_{L}^*(\Fs,\nabla)_{|U_{0}})\\
&= \lim_{z \xrightarrow[I_{0}]{} y} R_{0}\, \Rc_{S_{L}}(z,\pi_{L}^*(\Fs,\nabla))\\
& = R_{0}\, \Rc_{S}(x,(\Fs,\nabla)).
\end{align*}

Now, let us use corollary~\ref{cor:Rdbranch} with~$x$ and~$b$ to find some~$Y$, $d$, $R$ and~$U$. We may assume that~$I \subset U$. By proposition~~\ref{prop:shcontinuous} and lemma~\ref{lem:change}, we have 
\begin{align*}
\lim_{z \xrightarrow[I]{} x} \Rc_{S}(z,(\Fs,\nabla)) &= \lim_{z \xrightarrow[I]{} x} \Rc_{\emptyset}(z,(\Fs,\nabla)_{|U})\\
&=\lim_{z \xrightarrow[I]{} x} \min ( \Rc^{d}(z,(\Fs,\nabla))/R, 1)\\
&  = \min ( \Rc^{d}(x,(\Fs,\nabla))/R, 1).
\end{align*}

Let us now extend~$d$ to the derivation $d_{L} = d \otimes \mathrm{Id}$ on $\Os(Y_{L}) = \Os(Y)\ho_{K} L$. The conclusion of corollary~\ref{cor:Rdbranch}, forgetting point~\textit{(iii)} that we will not need, still holds with~$y$, $Y_{L}$, $d_{L}$ and~$R$. We have 
\[\Rc^{d_{L}}(y,(\Fs,\nabla)) = \Rc^{d}(x,(\Fs,\nabla)).\]

Let us now remark that there exists an open subset~$V$ of~$X_{L}$ that is both a connected component of $Y_{0}\setminus\{y\}$ and $Y_{L}\setminus\{y\}$ and that satisfies condition~\textit{ii}) of corollary~\ref{cor:Rdbranch} for~$d_{0}$ (with~$R_{0}$) and~$d_{L}$ (with~$R$). In fact, this is the case for almost every connected component of~$C\setminus\{y\}$, where~$C$ denotes the connected component of~$X_{L}$ that contains~$y$. Let~$J$ be a non-trivial interval with end-point~$y$ which is contained in~$V$.

By proposition~~\ref{prop:shcontinuous}, again, we have
\begin{align*}
\min ( \Rc^{d_{0}}(y,\pi_{L}^*(\Fs,\nabla))/R_{0},1) &= \lim_{z \xrightarrow[J]{} y} \min (\Rc^{d_{0}}(z,\pi_{L}^*(\Fs,\nabla))/R_{0},1)\\
&=  \lim_{z \xrightarrow[J]{} y} \Rc_{S}(z,\pi_{L}^*(\Fs,\nabla))\\
&= \lim_{z \xrightarrow[J]{} y} \min (\Rc^{d_{L}}(z,\pi_{L}^*(\Fs,\nabla))/R,1)\\
&= \min( \Rc^{d_{L}}(y,\pi_{L}^*(\Fs,\nabla))/R,1).
\end{align*}

The result follows.
\end{proof}

\begin{remark}\label{rem:Rdx}
Under the conditions of corollary~\ref{cor:Rdbranch}, the preceding proof shows that $\Rc_{S}(x,(\Fs,\nabla)) = \min(\Rc^d(x,(\Fs,\nabla))/R, 1)$. 

Let us point out that we only used the fact that the point~$x$ lies in the interior of~$X$ to ensure that the map~$\Rc^d$ is continuous at~$x$ along any interval. As a consequence, equality holds for a point~$x$ inside the boundary of~$X$, as soon as the continuity property is satisfied. It could be deduced from~\cite[theorem~4.11]{ContinuityBDV} or~\cite[section~5.2]{ContinuityCurves}, for instance.
\end{remark}

\begin{theorem}
Theorem~\ref{thm:continuousandfinite} holds if~$X$ is boundary-free.
\end{theorem}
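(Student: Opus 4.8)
The strategy is to glue together the local statements already proved, the only genuine work being to pass cleanly across the triangulation~$S$. We keep the reductions made at the start of this section. First, corollary~\ref{cor:boundaryfreefinite} produces a locally finite subgraph of~$X$ outside which $\Rc_{S}(\cdot,(\Fs,\nabla))$ is locally constant; enlarging it while preserving local finiteness, I would take~$\Gamma$ to be a locally finite subgraph of~$X$ containing~$\Gamma_{S}$, meeting every connected component of~$X$, and convex in every connected component of $X\setminus S$ (as in remark~\ref{rem:factorisation}). This is already property~\textit{ii)}. With such a~$\Gamma$, every connected component of $X\setminus\Gamma$ is a virtual open disc on which $\Rc_{S}$ is locally constant, hence constant, so $\Rc_{S}$ is the composite of its restriction to~$\Gamma$ with the natural continuous retraction $\tau\colon X\to\Gamma$. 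Thus it suffices to establish properties~\textit{i)} and~\textit{iii)} for the restriction of $\Rc_{S}$ to~$\Gamma$, and since~$\Gamma$ is covered by a locally finite family of segments, it is enough to argue one segment at a time, together with the vertices of~$\Gamma$.

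For property~\textit{iii)}, fix a connected subgraph~$\Gamma_{c}$ of~$\Gamma$; since $\mathrm{rk}\,\Fs$ is locally constant it is constant on the connected set~$\Gamma_{c}$, equal to $\mathrm{rk}(\Fs_{|\Gamma_{c}})$. A segment $J\subseteq\Gamma_{c}$ meets the discrete set~$S$ in finitely many points, and away from~$S$ its interior lies in connected components~$V$ of $X\setminus S$. On such a~$V$ (a virtual open disc or annulus), proposition~\ref{prop:finiteopendisc} together with lemma~\ref{lem:restriction} shows that $\Rc_{S}$ restricted to segments of~$V$ is piecewise $\log$-linear, with slopes those furnished by theorem~\ref{thm:Kedlaya}, namely of the form $\pm m/i$ with $1\le i\le\mathrm{rk}(\Fs_{|V})=\mathrm{rk}(\Fs_{|\Gamma_{c}})$ — the comparison of lemma~\ref{lem:restriction} contributes only slopes~$\pm1$ and finitely many break-points. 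Near a point $x\in S$ of type~$2$ lying on~$J$, corollary~\ref{cor:Rdbranch}~\textit{iii)} and lemma~\ref{lem:restriction} identify $\Rc_{S}$ along the corresponding branch with the minimum with~$1$ of a rescaling of $\Rc^{d}$ on a semi-open annulus section of that branch, so theorem~\ref{thm:Kedlaya} applies and gives the same bound; near a point of~$S$ of type~$3$ a neighbourhood is an annulus and theorem~\ref{thm:Kedlaya} applies directly; and at a point of~$X(K)$ on~$J$ the function is constant by theorem~\ref{thm:Cauchypadique}. Since~$J$ has only finitely many such exceptional points, its restriction of $\Rc_{S}$ is piecewise $\log$-linear with slopes of the required form, which is~\textit{iii)}.

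For property~\textit{i)}, as $\Rc_{S}$ factors through the continuous retraction~$\tau$ it is enough to prove that its restriction to~$\Gamma$ is continuous, hence to check continuity along each segment of the covering and at each vertex. Along a segment inside a component~$V$ of $X\setminus S$ this follows from theorem~\ref{thm:Kedlaya} transported through lemma~\ref{lem:restriction} (equivalently lemma~\ref{lem:change}); at a point of~$X(K)$ the function is locally constant by theorem~\ref{thm:Cauchypadique}; at a point of~$S$ of type~$3$ a neighbourhood is an annulus; and at a point $x\in S$ of type~$2$ — note $S\subseteq\mathrm{Int}(X)$ since $X$ is boundary-free — corollary~\ref{cor:continuousinterval} gives continuity of the restriction to every interval with end-point~$x$, so, only finitely many branches of~$\Gamma$ emanating from~$x$, the restriction of $\Rc_{S}$ to~$\Gamma$ is continuous at~$x$. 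Therefore this restriction is continuous, and $\Rc_{S}$ is continuous on~$X$.

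I expect the main obstacle to be purely organisational: systematically passing between $\Rc_{S}(\cdot,(\Fs,\nabla))$, the restricted radius $\Rc_{\emptyset}(\cdot,(\Fs,\nabla)_{|V})$, and the derivation radii~$\Rc^{d}$ across the boundaries of the components of $X\setminus S$, and verifying that continuity, piecewise $\log$-linearity and the slope bounds are genuinely preserved under these comparisons (lemmas~\ref{lem:restriction} and~\ref{lem:change}, corollary~\ref{cor:Rdbranch}), including the care needed for the coordinate factor and the minimum with~$1$. The substantive analytic content — finiteness outside a locally finite graph, continuity at the points of~$S$, and the slope bounds — is already in hand through corollaries~\ref{cor:boundaryfreefinite} and~\ref{cor:continuousinterval} and theorem~\ref{thm:Kedlaya}.
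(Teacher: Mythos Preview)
Your proposal is correct and follows essentially the same route as the paper: invoke corollary~\ref{cor:boundaryfreefinite} for~\textit{ii)}, reduce continuity to~$\Gamma$ and handle points outside~$S$ by theorem~\ref{thm:Kedlaya}, type~3 points of~$S$ via an annulus neighbourhood, and type~2 points of~$S$ via corollary~\ref{cor:continuousinterval}, then deduce~\textit{iii)} from theorem~\ref{thm:Kedlaya} on the pieces of~$\Gamma$ away from~$S$. The paper is somewhat more economical: it does not enlarge~$\Gamma$ as in remark~\ref{rem:factorisation}, and for~\textit{iii)} it simply observes that once continuity is known it suffices to check the piecewise $\log$-linearity and slope bounds on $\Gamma\cap(X\setminus S)$, where theorem~\ref{thm:Kedlaya} applies directly --- your separate treatment of points of~$S$ for~\textit{iii)} (via corollary~\ref{cor:Rdbranch}) is not needed, since~$S$ is discrete in~$\Gamma$ and continuity already glues the pieces.
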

\begin{proof}
By corollary~\ref{cor:boundaryfreefinite}, there exists a locally finite subgraph~$\Gamma$ of~$X$ outside which the map $\Rc_{S}(\cdot,(\Fs,\nabla))$ is locally constant. This is the content of property~\textit{(ii)}.

It is now enough to prove the continuity of the restriction of $\Rc_{S}(\cdot,(\Fs,\nabla))$ to~$\Gamma$. Let~$x \in \Gamma$. If~$x$ does not belong to the triangulation~$S$, it belongs to an open disc or annulus and the continuity at~$x$ follows from theorem~\ref{thm:Kedlaya}. If~$x$ is a point of~$S$ of type~3, by~\cite[th\'eor\`eme~3.3.5]{RSSen}, it has a neighbourhood~$U$ which is isomorphic to an open annulus, and continuity at~$x$ holds, by the same argument. If~$x$ is a point of~$S$ of type~2, it follows from corollary~\ref{cor:continuousinterval}.

Finally, to prove property~\textit{(iii)}, it is enough to consider the restriction of $\Rc_{S}(\cdot,(\Fs,\nabla))$ to~$\Gamma \cap X\setminus S$, and we may now conclude by theorem~\ref{thm:Kedlaya} again.
\end{proof}

Let us recall that $(\Fs,\nabla)$ is said to be overconvergent if there exist a strictly $K$-analytic curve~$X_{0}$ and a locally free $\Os_{X_{0}}$-module of finite type~$\Fs_{0}$ endowed with an integrable connection~$\nabla_{0}$ such that
\begin{enumerate}[\it i)]
\item $X$ embeds in $\textrm{Int}(X_{0})$ ;
\item $(\Fs_{0},\nabla_{0})$ restricts to $(\Fs,\nabla)$ on~$X$.
\end{enumerate}
Here, since~$X$ is quasi-smooth, the analytic curve~$X_{0}$ may be assumed to be quasi-smooth too.

\begin{corollary}
Theorem~\ref{thm:continuousandfinite} holds if $(\Fs,\nabla)$ is overconvergent.
\end{corollary}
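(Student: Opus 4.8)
The plan is to localise the problem and reduce it to the boundary-free case, which is already proved. All three properties of theorem~\ref{thm:continuousandfinite} are local on~$X$: continuity and the $\log$-linearity and slope statements clearly are, and if $\Rc_{S}(\cdot,(\Fs,\nabla))$ is locally constant outside a \emph{finite} graph in a neighbourhood of every point, then gluing these finite graphs produces a locally finite subgraph of~$X$ outside which it is locally constant. So it suffices to produce, for each $x\in X$, a neighbourhood of~$x$ in~$X$ on which the three properties hold.

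For $x\in\mathrm{Int}(X)$, overconvergence plays no role. Every intermediate result used in the boundary-free proof (corollaries~\ref{cor:Rdbranch}, \ref{cor:locfinite}, \ref{cor:finiteopendiscx} and~\ref{cor:continuousinterval}, proposition~\ref{prop:finiteopendisc}, and the potential-theoretic input) is stated and proved in a neighbourhood of a point of~$\mathrm{Int}(X)$ and uses the global boundary-freeness of~$X$ only to guarantee that interior points admit boundary-free open neighbourhoods, so that potential theory can be run there; this holds in any case. Hence those results apply around~$x$ and give the conclusion near~$x$.

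So assume $x\in\partial X$, and use the overconvergence hypothesis. Since every connected component of $X\setminus S$ is a boundary-free analytic space, no such component meets~$\partial X$, whence $\partial X\subseteq S$; thus $x$ is a point of~$S$ of type~$2$ or~$3$. Replacing~$X_{0}$ by $X':=\mathrm{Int}(X_{0})$, we may assume that~$X'$ is a quasi-smooth \emph{boundary-free} curve containing~$X$ as an analytic domain, carrying an integrable connection $(\Fs',\nabla')$ restricting to $(\Fs,\nabla)$ on~$X$, with $x\in\mathrm{Int}(X')=X'$. Shrinking, we may assume the triangulation~$S$ reduces to~$\{x\}$ in a neighbourhood of~$x$; choose a weak triangulation~$S'$ of~$X'$ that, after subdivision, also reduces to~$\{x\}$ near~$x$. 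By the boundary-free case, $\Rc_{S'}(\cdot,(\Fs',\nabla'))$ satisfies properties~\textit{i)}--\textit{iii)} near~$x$. It remains to transfer this to $\Rc_{S}(\cdot,(\Fs,\nabla))$ near~$x$. By A.~Ducros's local structure theorem~\ref{thm:bonvois}, near~$x$ all but finitely many branches of~$X'$ out of~$x$ are open discs; the branches of~$X$ out of~$x$ form a sub-collection of those of~$X'$, and on the cofinitely many disc-branches belonging to~$X$ the discs $D(\tilde{y},S_{L})$ and $D(\tilde{y},S'_{L})$ coincide for $\tilde{y}$ close to~$x$, so $\Rc_{S}(\cdot,(\Fs,\nabla))=\Rc_{S'}(\cdot,(\Fs',\nabla'))$ there; on the remaining finitely many branches of~$X$ out of~$x$ — the annulus-branches of the triangulation and any special disc-branches — the two radii differ by a factor of $\rho$-type, continuous, locally constant outside a finite graph near~$x$ and piecewise $\log$-linear with slopes~$0$ or~$\pm1$, exactly as described in lemmas~\ref{lem:rho}, \ref{lem:rhoS'S}, \ref{lem:restriction} and~\ref{lem:change}. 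Since properties~\textit{i)}--\textit{iii)} are preserved under multiplication by such a factor and capping at~$1$, the function $\Rc_{S}(\cdot,(\Fs,\nabla))$ inherits them near~$x$. Assembling over all $x\in X$ finishes the proof; one may also reduce at the outset to $K$ algebraically closed, maximally complete and non-trivially valued by lemma~\ref{lem:basechange}, overconvergence being stable under base change.

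The delicate point is the transfer step of the last paragraph: one must pin down precisely how the weak triangulations~$S$ of~$X$ and~$S'$ of~$X'$, their skeletons, and the normalising radii are related in a neighbourhood of a boundary point of~$X$, and verify that the resulting discrepancy between the two radii of convergence is exactly of the $\rho$-type handled by lemmas~\ref{lem:restriction} and~\ref{lem:change}. Everything else amounts to invoking the boundary-free case and the local statements it rests on.
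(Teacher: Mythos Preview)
The paper gives no proof for this corollary; it is meant to follow immediately from the boundary-free theorem just established. The intended argument is the global one: pass to the boundary-free curve $\mathrm{Int}(X_{0})$, endow it with a weak triangulation~$S'$ containing~$S$, apply the theorem there to $\Rc_{S'}(\cdot,(\Fs_{0},\nabla_{0}))$, and transfer back to $\Rc_{S}(\cdot,(\Fs,\nabla))$ on~$X$ via formula~(\ref{eq:rhoS'S}) and lemma~\ref{lem:rhoS'S}.

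Your proof is correct and rests on the same idea, but you take a more elaborate route: you localise, treat interior points directly (correctly observing that all the potential-theoretic input is local and only needs a boundary-free open neighbourhood, which interior points always have), and reserve overconvergence for boundary points, where you carry out the transfer branch by branch. This works, and your honest flagging of the transfer step as the delicate point is apt --- it is exactly the content the paper leaves implicit. The separation into interior and boundary cases is not needed, however: since overconvergence already hands you a global boundary-free ambient curve, the global argument on $\mathrm{Int}(X_{0})$ handles all points of~$X$ at once and avoids the patching. What your approach buys is the observation that overconvergence is irrelevant away from~$\partial X$; what the global approach buys is brevity and a cleaner comparison of triangulations.
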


\nocite{}
\bibliographystyle{alpha}
\bibliography{biblio}

\end{document}